\documentclass[11pt,reqno]{amsart}
\usepackage[utf8]{inputenc}
\usepackage[T1]{fontenc} 
\usepackage{lmodern}
\usepackage{amsfonts,amsthm,amsmath,amssymb,thmtools}
\usepackage{booktabs}
\usepackage{boldline,makecell}
\usepackage{array,diagbox}
\usepackage{graphicx}
\usepackage[dvipsnames,table]{xcolor}   
\usepackage{enumerate}
\usepackage{hyperref}
\usepackage{color}
\usepackage{tikz-cd}
\usetikzlibrary{shapes.misc}

\usepackage{transparent}
\usepackage[margin=1in]{geometry}
\usepackage[
    maxbibnames=99,
    backend=biber,
    style=alphabetic,
    sorting=nyt,
    giveninits=true,
    url=false,
    doi=false,
    isbn=false
]{biblatex}
\DeclareFieldFormat{pages}{#1}
\renewbibmacro{in:}{%
  \ifentrytype{article}
    {}
    {\bibstring{in}%
     \printunit{\intitlepunct}}}
\DeclareFieldFormat
[article,inbook,incollection,inproceedings,patent,thesis,unpublished]
  {title}{\mkbibemph{#1}}
\DeclareFieldFormat{journaltitle}{#1\isdot}
\DeclareFieldFormat[article]{volume}{\mkbibbold{#1}}
\DeclareFieldFormat[article]{number}{\bibstring{number}\addnbspace #1}

\renewbibmacro*{journal+issuetitle}{%
  \usebibmacro{journal}%
  \setunit*{\addspace}%
  \iffieldundef{series}
    {}
    {\newunit
     \printfield{series}%
     \setunit{\addspace}}%
  \printfield{volume}%
  \setunit{\addspace}%
  \usebibmacro{issue+date}%
  \setunit{\addcomma\space}%
  \printfield{number}%
  \setunit{\addcolon\space}%
  \usebibmacro{issue}%
  \setunit{\addcomma\space}%
  \printfield{eid}
  \newunit}
\DeclareFieldFormat{series}{#1\isdot}

\newtheoremstyle{mystyle}
  {10pt}
  {4pt}
  {\itshape}
  {}
  {\bfseries}
  {.}
  { }
  {\thmname{#1}\thmnumber{ #2}\thmnote{ (#3)}}

\theoremstyle{mystyle}
\newtheorem{Thm}{Theorem}[section]
\newtheorem{Lem}[Thm]{Lemma}
\newtheorem{Cor}[Thm]{Corollary}
\newtheorem{Prop}[Thm]{Proposition}

\newtheorem{Que}[Thm]{Question}

\theoremstyle{definition}

\newtheorem{Ex}[Thm]{Example}

\theoremstyle{remark}
\newtheorem{Rmk}[Thm]{Remark}

\declaretheoremstyle[%
  spaceabove=3pt,
  spacebelow=10pt,
  headfont=\normalfont\itshape,%
  postheadspace=.5em,%
  qed=\qedsymbol%
]{mystyle2}

\newcommand{\R}{\mathbb{R}}
\newcommand{\Z}{\mathbb{Z}}

\author{Ian Agol}
\address{Department of Mathematics, University of California, Berkeley, Berkeley, CA 94720, USA}
\email{ianagol@berkeley.edu}

\author{Qiuyu Ren}
\address{Department of Mathematics, University of California, Berkeley, Berkeley, CA 94720, USA}
\email{qiuyu\_ren@berkeley.edu}

\title{Ribbon concordance of fibered knots and compressions of surface homeomorphisms}

\begin{document}

\begin{abstract}
We prove that simplicial volume and dilatation are monotone under ribbon concordance between fibered knots in $S^3$, and that every fibered knot has only finitely many predecessors in the ribbon-concordance partial order, providing evidence for questions raised by Gordon. We also give an algorithm to enumerate, up to symmetries, all minimal compressions of a surface homeomorphism, extending a theorem of Casson--Long. This yields an algorithm to find all knots that are strongly homotopy-ribbon concordant to a given fibered knot in some homotopy $I\times S^3$. Our study of minimal compressions also provides an alternative perspective on results of Miyazaki concerning nonsimple fibered ribbon knots.
\end{abstract}

\maketitle

\section{Introduction}
\subsection{Ribbon concordances of fibered knots}\label{sbsec:intro_ribbon}
A (smooth) concordance $C\subset I\times S^3$ from a knot $J$ to a knot $K$ is a (smoothly) embedded annulus cobounding $J$ and $K$. It is a \textit{ribbon concordance} if the projection $I\times S^3\to I$ restricts to a Morse function on $C$ with no index-$2$ critical points. We say that $J$ is \textit{ribbon concordant} to $K$, denoted $J\le K$, if there exists such a concordance.

The notion of ribbon concordance was introduced by Gordon \cite{gordon1981ribbon} and has been studied extensively in recent years. If $J\le K$, then $J$ is expected to be ``simpler'' than $K$. In particular, many knot invariants are known to behave monotonically under ribbon concordance, including the Alexander polynomial and the $S$-equivalence class \cite{gilmer1984ribbon}, Seifert genus and knot Floer homology \cite{zemke2019knot}, and Khovanov homology \cite{levine2019khovanov}. The first author recently proved that ribbon concordance is a partial order \cite{agol2022ribbon}, confirming a conjecture of Gordon \cite{gordon1981ribbon}.

Our work is motivated by the following questions of Gordon \cite{gordon1981ribbon}:
\begin{Que}[{\cite[Question~6.4]{gordon1981ribbon}}]
Does $J\le K$ imply $||S^3\backslash J||\le||S^3\backslash K||$?
\end{Que}
Here, $||\cdot||$ denotes the simplicial volume. 

\begin{Que}[{\cite[Question~6.2]{gordon1981ribbon}}]\label{que:stabilize}
If $K_1\ge K_2\ge\cdots$, does there exist $m$ such that $K_n=K_m$ for all $n\ge m$?
\end{Que}

The following refinement of Question~\ref{que:stabilize} was raised by Baldwin--Sivek \cite{baldwin2025ribbon}.
\begin{Que}[{\cite[Question~1.1]{baldwin2025ribbon}}]
For each knot $K$, are there only finitely many $J$ with $J\le K$?
\end{Que}

We answer these questions affirmatively when $K$ is fibered. Note that if $K$ is fibered and $J\le K$, then $J$ is also fibered \cite{silver1992knot,kochloukova2006some}.

\begin{Thm}\label{thm:volume}
If $K$ is fibered and $J\le K$, then $||S^3\backslash J||\le||S^3\backslash K||$.
\end{Thm}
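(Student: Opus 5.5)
The plan is to deduce Theorem~\ref{thm:volume} from Gromov's monotonicity of simplicial volume under maps of nonzero degree. It suffices to build a proper map $f\colon (S^3\backslash K,\partial)\to(S^3\backslash J,\partial)$ of degree $\pm1$ that is a homeomorphism on the boundary tori. Then $||S^3\backslash J||\le|\deg f|\cdot||S^3\backslash K||=||S^3\backslash K||$, using the relative version of the inequality for manifolds with toroidal boundary; boundary tori contribute nothing since they have amenable fundamental group. The map will come from fibrations: write $S^3\backslash K$ as the mapping torus of $\varphi_K\colon F_K\to F_K$, and $S^3\backslash J$ as the mapping torus of $\varphi_J\colon F_J\to F_J$. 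Here $J$ is fibered by \cite{silver1992knot,kochloukova2006some}.

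\textbf{Step 1 (group theory).} Let $E=(I\times S^3)\backslash C$. From the ribbon handle structure, Gordon's lemmas give the following: $\pi_1(S^3\backslash J)\to\pi_1(E)$ is injective, and $\pi_1(S^3\backslash K)\to\pi_1(E)$ is surjective. Both maps respect the meridians and the abelianization to $\Z$, since $C$ is an annulus. Restricting to kernels of the maps to $\Z$ gives fiber subgroups $\pi_1F_K\cong F_{2g_K}$ and $\pi_1F_J\cong F_{2g_J}$. I would show that the composite
\[
\pi_1(S^3\backslash K)\to\pi_1(E)\supseteq\pi_1(S^3\backslash J)
\]
lands in, and is onto, the image of $\pi_1(S^3\backslash J)$. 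I expect to use a Hopfian/residual finiteness argument for the knot groups, in the spirit of \cite{agol2022ribbon}, or equivalently the fact that $E$ retracts onto $S^3\backslash J$ on $\pi_1$ after killing the ribbon relations. If this works, we get an epimorphism
\[
\rho\colon\pi_1(S^3\backslash K)\twoheadrightarrow\pi_1(S^3\backslash J)
\]
sending meridian to meridian and longitude to longitude. The longitude statement holds because both longitudes are isotopic in $E$ to the boundary of the annulus push-off, and $\rho$ preserves the peripheral structure. Restricting $\rho$ gives a surjection $\rho_0\colon\pi_1F_K\to\pi_1F_J$. It sends the boundary word to the boundary word and intertwines $(\varphi_K)_*$ with $(\varphi_J)_*$ up to inner automorphisms, namely conjugation by the image of the meridian.

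\textbf{Step 2 (topology).} The target is aspherical: $F_J$ is a $K(\pi,1)$, and so is $S^3\backslash J$. Hence $\rho$ is induced by a map $f\colon S^3\backslash K\to S^3\backslash J$. Because $\rho$ respects the peripheral subgroup $\Z^2$, $f$ can be homotoped rel nothing so that it restricts to a homeomorphism $\partial(S^3\backslash K)\to\partial(S^3\backslash J)$ matching meridians and longitudes. Again asphericity of the torus is what makes this possible. Alternatively, one can build $f$ fiberwise: first take $p\colon F_K\to F_J$ realizing $\rho_0$ with $p|_{\partial}$ a homeomorphism, then use a homotopy $p\circ\varphi_K\simeq\varphi_J\circ p$ rel $\partial$ to glue the mapping tori.

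\textbf{Step 3 (degree).} Since $f|_\partial$ is a homeomorphism, $\deg f=\pm1$. This is read off from the boundary map $H_3(M,\partial M)\to H_2(\partial M)$, which is an isomorphism onto the fundamental class of the torus in both cases. Gromov's inequality then finishes the proof.

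The main obstacle is Step~1: showing that the peripheral-preserving surjection really exists, i.e.\ that $\pi_1(S^3\backslash K)$ maps \emph{onto} $\pi_1(S^3\backslash J)$, not merely into $\pi_1(E)$. Here fiberedness of $K$ is essential. The fiber subgroups are finitely generated free groups, and the ribbon 2-handles impose relations invariant under the monodromy. I would try to prove that adding these relations to $\pi_1F_K$, and taking the quotient by the normal closure of their $\varphi_K$-orbits, yields exactly $\pi_1F_J$. The tools would be the Hopfian property of free groups together with the rank count $2g_J\le 2g_K$ from Alexander polynomial monotonicity \cite{gilmer1984ribbon}. This is the step that may need the compression-of-homeomorphisms machinery developed later in the paper.
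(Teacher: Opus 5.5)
There is a genuine gap, and it lies in Step~1, which carries all the content. You never construct $\rho$, and the mechanisms you suggest cannot produce it.

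Since $\pi_1(S^3\backslash K)\to\pi_1(E)$ is onto, the composite can land in the image of $\pi_1(S^3\backslash J)$ only if $\pi_1(S^3\backslash J)\to\pi_1(E)$ is also onto. That fails whenever $J\ne K$. Restricted to the kernels of the maps to $\Z$, this map is the inclusion of the interior boundary $\pi_1F_J\hookrightarrow\pi_1C\cong\pi_1F_J*F_{g_K-g_J}$, where $C$ is the compression body from (the proof of) Theorem~\ref{thm:CG}. A free group of rank $2g_J$ cannot surject onto one of rank $g_J+g_K>2g_J$. (If $g_J=g_K$, then $C$ is a product and $J=K$.)

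For the same reason, your rank/Hopfian plan runs the wrong way. The quotient of $\pi_1F_K$ that the ribbon relations actually produce is this kernel $\pi_1C$, not $\pi_1F_J$. To get from $\pi_1C$ down to $\pi_1F_J$, you would need a retraction that is $\Phi_*$-equivariant up to inner automorphisms. The natural retraction $r\colon C\to\partial_iC$ collapsing the $1$-handles is not equivariant, because $\Phi$ need not preserve the $1$-handles; the free factor $F_{g_K-g_J}$ is not $\Phi_*$-invariant in general. Nothing in your proposal supplies another retraction. A degree-one map $S^3\backslash K\to S^3\backslash J$ is a strictly stronger statement than the theorem, and the paper never constructs one. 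Your Steps~2 and~3 are fine, but only conditionally on this epimorphism.

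The missing idea is that bounded cohomology lets you use the non-equivariant retraction anyway. The paper compares both knot complements with the mapping torus $T_\Phi$ of the compression body. The inclusion $T_{\Phi_e}\to T_\Phi$ gives $||T_{\Phi_e}||\ge||[T_\Phi]||_1$ directly. For $||T_{\Phi_i}||\le||[T_\Phi]||_1$, the retraction $r$ is used only on the infinite cyclic covers $\R\times C\to\R\times\partial_iC$. Because $\Z$ is amenable, Gromov's averaging operator $\hat A$ does not increase norms and commutes with $\hat\iota^*$. Consequently the composite $s=\hat A\circ\hat r^*\circ\hat\rho_i^*\colon\hat H^3(T_{\Phi_i})\to\hat H^3(T_\Phi)$ also does not increase norms and satisfies $\hat\iota^*\circ s=\mathrm{id}$. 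Hence every bounded class dual to $[T_{\Phi_i}]$ extends to a class dual to $[T_\Phi]$ of no larger norm, and Proposition~\ref{prop:l1_linfty} gives the inequality. Amenable averaging is exactly what repairs the lack of equivariance that blocks your Step~1.
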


For a fibered knot $K$, the monodromy $\phi_K\colon S\to S$ is a surface homeomorphism. By the Nielsen--Thurston classification, $\phi_K$ can be canonically decomposed along a multicurve into periodic and pseudo-Anosov pieces. The \textit{dilatation} of $K$, denoted $\lambda(K)$, is the maximal dilatation constant among pseudo-Anosov pieces of $\phi_K$ (or $1$, if there is no pseudo-Anosov piece).

\begin{Thm}\label{thm:dilatation}
If $K$ is fibered and $J\le K$, then $\lambda(J)\le\lambda(K)$.
\end{Thm}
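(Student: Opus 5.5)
Set $X=I\times S^3\setminus \nu C$ for a ribbon concordance $C$ from $J$ to $K$. The standard facts I will use are from Gordon's paper: $\pi_1(S^3\setminus J)\to\pi_1(X)$ is injective and $\pi_1(S^3\setminus K)\to\pi_1(X)$ is surjective. Because $K$ is fibered, so is $J$ \cite{silver1992knot,kochloukova2006some}. Write $F_J, F_K$ for the fundamental groups of the fibers, which are free of the same rank $2g$, since Seifert genus is preserved under ribbon concordance of fibered knots \cite{zemke2019knot}. Also write $\phi_J,\phi_K$ for the induced outer automorphisms.

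The plan is to express $\phi_J$ as a ``compression'' of $\phi_K$. Restricting to commutator subgroups gives an injection $F_J\hookrightarrow G:=[\pi_1X,\pi_1X]$ and a surjection $F_K\twoheadrightarrow G$. Both are compatible with conjugation by a meridian, which is common to all three groups up to the identification.

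The first step is to show that $G$ is free of rank $2g$, so that $F_J\to G$ is an isomorphism. Gilmer's $S$-equivalence result \cite{gilmer1984ribbon} gives equality of Alexander polynomials. Hence $H_1(G)\cong \Z^{2g}$, and $G$, being a quotient of $F_K$, is generated by $2g$ elements. The inclusion $F_J\to G$ is then an injection of a rank-$2g$ free group into a $2g$-generated group inducing an isomorphism on $H_1$, equivariantly. To conclude that it is onto, I would use that $X$ is built from $S^3\setminus J$ by attaching $1$-handles and then $2$-handles. So $\pi_1X$ is $\pi_1(S^3\setminus J)*F_r$ modulo $r$ relators, and the Mayer--Vietoris argument on the infinite cyclic cover shows that $G$ is generated by $F_J$ together with the lifts of the $1$-handles, which are killed homologically. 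I expect this step, showing $F_J\cong G$ rather than merely $F_J\hookrightarrow G$, to be the main obstacle. If it fails, a weaker route is to work only with the surjection $\pi: F_K\twoheadrightarrow F_J$ (the composite through the meridian-equivariant identification). Such a surjection exists whenever $\pi_1(S^3\setminus J)$ is a retract, which is the situation Gordon's argument provides.

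Granting an equivariant surjection $\pi\colon F_K\to F_J$ with $\pi\circ\phi_K=\phi_J\circ\pi$ (up to inner automorphisms), I then compare growth rates. For a conjugacy class $w$ in $F_J$, choose a lift $\tilde w\in F_K$. Then $|\phi_J^n(w)|\le C|\phi_K^n(\tilde w)|$, because $\pi$ is Lipschitz on word length. Taking $n$th roots, the exponential growth rate of $\phi_J$ on conjugacy classes is at most that of $\phi_K$. For a surface homeomorphism, the maximal growth rate of conjugacy classes of curves equals the maximal pseudo-Anosov dilatation, or $1$ if there is none; this is the Thurston--Bestvina--Handel theorem on growth. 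This gives $\lambda(J)\le\lambda(K)$.

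Alternatively, and more in line with the title, the key step can be phrased as: $\phi_J$ is obtained from $\phi_K$ by compressing along a $\phi_K$-invariant system of curves. That is, the surjection is induced by a handlebody-type compression in the fibered cobordism, and dilatation can only drop under compression by the same length-growth argument.
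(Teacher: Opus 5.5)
There is a genuine gap, at the step you flagged, but it fails for a more basic reason than you anticipate: its inputs are false. Ribbon concordance does not preserve Seifert genus or the Alexander polynomial, even between fibered knots. Zemke gives only $g(J)\le g(K)$, and Gilmer only $\Delta_J\mid\Delta_K$. For example, $U\le R$ for the (fibered, ribbon) square knot $R$, and more generally $J\le J\#R$ as in Section~\ref{sec:example}. So $F_J\hookrightarrow G$ is not onto in general. In fact $G$ is free by Kochloukova's theorem, so it equals the fundamental group of the compression body produced in the proof of Theorem~\ref{thm:CG}. Hence $G\cong F_J*F'$ with $F'$ free of rank $g(K)-g(J)$, and $H_1(G)\cong\Z^{g(J)+g(K)}$. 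For $U\le R$ one has $F_J=1$ but $G$ free of rank $2$. No Mayer--Vietoris argument can place the $1$-handle generators inside $F_J$.

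The fallback does not close the gap either. Your implication ``retraction $\Rightarrow$ equivariant surjection'' is fine, since a group retraction fixes the meridian. But Gordon does not provide a retraction: he proves injectivity of $\pi_1(S^3\setminus J)\to\pi_1X$ from residual finiteness of knot groups and the Gerstenhaber--Rothaus theorem. You give no other reason a meridian-equivariant surjection $F_K\twoheadrightarrow F_J$ should exist. The retraction that does exist, $G=F_J*F'\to F_J$, depends on a choice of $1$-handles and need not commute with the monodromy, so your inequality $|\phi_J^n(w)|\le C|\phi_K^n(\tilde w)|$ is unsupported. Similarly, in your last paragraph, compressing $F_K$ along a $\phi_K$-invariant curve system yields $\pi_1$ of the compression body, which contains $F_J$ as a free factor, not as an equivariant quotient.

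The missing idea is that no map $F_K\to F_J$ is needed. Keep the equivariant zigzag $F_J\hookrightarrow G\twoheadleftarrow F_K$ and use only that $F_J$ is undistorted in $G$. Then $\gamma_{\phi_J}\le\gamma_{\phi_G}\le\gamma_{\phi_K}$. The first inequality holds because the non-equivariant retraction onto the free factor is Lipschitz, while $\phi_G$ (conjugation by the meridian) restricts to $\phi_J$ on $F_J$. The second holds because word length does not increase under $F_K\twoheadrightarrow G$. This is exactly the paper's proof: Theorem~\ref{thm:CG} supplies the compression body, Theorem~\ref{thm:dilatation_compressionbody} gives $\gamma_{\Phi_{i,*}}\le\gamma_{\Phi_*}\le\gamma_{\Phi_{e,*}}$, and Lemma~\ref{lem:growth_rate} identifies these growth rates with $\log\lambda$.
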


The following theorem is deduced as a consequence of Theorem~\ref{thm:dilatation}.
\begin{Thm}\label{thm:finite}
If $K$ is fibered, then there are finitely many $J$ with $J\le K$.
\end{Thm}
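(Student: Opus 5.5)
We deduce Theorem~\ref{thm:finite} from Theorem~\ref{thm:dilatation}, together with the monotonicity of Seifert genus under ribbon concordance \cite{zemke2019knot} and the fact that predecessors of fibered knots are fibered \cite{silver1992knot,kochloukova2006some}. Fix a fibered knot $K$ with fiber of genus $g$. If $J\le K$, then $J$ is fibered, and $g(J)\le g$. Moreover, $\lambda(J)\le\lambda(K)$ by Theorem~\ref{thm:dilatation}. Each such $J$ is determined by its monodromy $\phi_J\colon S_{g(J),1}\to S_{g(J),1}$ up to conjugacy in the mapping class group of the once-punctured (or one-boundary-component) surface, with boundary twisting fixed by the requirement that the closed-up manifold be $S^3$. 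It therefore suffices to show that, for each fixed genus $h\le g$, only finitely many knots in $S^3$ are fibered with fiber genus $h$ and dilatation at most $\lambda(K)$.

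The key input is a finiteness statement for bounded-dilatation mapping classes, which we handle on the Nielsen--Thurston pieces. Decompose $\phi_J$ along its canonical reduction multicurve. On each pseudo-Anosov piece, the dilatation is at most $\lambda(K)$. There are only finitely many conjugacy classes of pseudo-Anosov maps on a fixed surface with dilatation bounded by a constant, because the dilatation is the exponential of the translation length in Teichmüller space. Bounded translation length together with compactness of the thick part of moduli space (via Mumford compactness, since pseudo-Anosov axes stay in a thick part depending on the length bound) gives finitely many conjugacy classes. Periodic pieces come from finitely many conjugacy classes on surfaces of bounded complexity. Finitely many topological types of reduction systems arise up to homeomorphism.

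The main obstacle is the gluing data: Dehn twists along the reducing curves and the boundary twist are not controlled by dilatation. Arbitrarily many twists along a reducing curve would give infinitely many mapping classes with the same $\lambda$. I would handle this using the 3-manifold side. The reducing curves give JSJ tori of $S^3\backslash J$, so the knot exterior is a graph-manifold/hyperbolic union, and each JSJ piece is a bounded-complexity fibered manifold. For knots in $S^3$, the JSJ decomposition is governed by satellite and cabling operations. Twist parameters along reducing curves correspond to cabling slopes or to the gluing of Seifert fibered pieces, and these are constrained by the homology of $S^3$ and by the genus bound: the genus of a cable grows with the cabling parameter, and Seifert fibered pieces in knot complements are torus knot spaces, composing spaces, or cable spaces with bounded invariants once the genus is bounded. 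Concretely, I would invoke Schubert's result that a bounded genus allows only finitely many companion/pattern combinations whose pieces are each among finitely many manifolds. Fibered hyperbolic pieces with bounded fiber genus and bounded dilatation are finitely many, as above, and the torus and cable pieces are finitely many by the genus bound. Finally, a knot in $S^3$ is determined by its complement (Gordon--Luecke), giving finitely many $J$.

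Alternatively, if the paper later shows that $\phi_J$ arises as a compression of $\phi_K$ (a subsurface/quotient relation among monodromies), I would instead enumerate compressions directly: finitely many subsurfaces of $S$ up to the symmetries of $\phi_K$ yield finitely many candidates. That route avoids the twist issue entirely.
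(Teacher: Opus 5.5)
Your reduction to monodromies on surfaces of genus at most $g(K)$ is sound, and so is your finiteness of the Nielsen--Thurston pieces (bounded dilatation for the pseudo-Anosov ones, bounded complexity for the periodic ones). Both parallel the paper, which likewise uses $g(J)\le g(K)$ and Theorem~\ref{thm:dilatation} to get finitely many JSJ pieces.

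The gap is exactly the step you flag as the main obstacle: you never bound the twisting along the reduction curves. These twists are gluing data, not cabling slopes. Once the pieces are fixed, the cable and torus knot parameters are already determined, up to finite ambiguity, by the periodic pieces. Twisting along a reduction curve instead changes how two adjacent JSJ pieces are glued. For Seifert fibered pieces the explicit classification pins down the boundary parametrizations. For a reduction curve adjacent to a pseudo-Anosov piece, nothing in your sketch applies; Schubert's genus formula bounds the genera of patterns and companions, not how they are glued.

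Nor can genus or homology detect these twists. Every reduction curve $c$ of a knot monodromy bounds a copy of a companion's fiber, so it is separating. Hence $\phi_J\circ T_c^k$ lives on the same fiber and induces the same map on $H_1(F_J)$, so it has the same Seifert form. Concretely, take $J=P(C)$, $T$ the companion torus, and $c$ a component of $F_J\cap T$. Passing to $\phi_J\circ T_c^k$ reglues the exterior along $T$ by a twist along the longitude, and yields an open book of the homology sphere $S^3_{\pm1/k}(C)$. That this is not $S^3$ for $k\ne0$ is precisely the Gordon--Luecke theorem. So bounding the twists is a Dehn surgery problem, not a genus or homology problem.

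The paper resolves it by recording $S^3\backslash J$ as its JSJ tree, its pieces, and a parametrization $(m,\ell)$ of each JSJ torus by the companion's meridian and longitude. In these terms, your twist ambiguity is exactly the choice of meridians. For a hyperbolic piece $M_v$, the meridian on the boundary torus $T_*$ nearest $J$ is a slope along which $M_v$ fills either to $S^3$ (if $\partial M_v=T_*$) or to an unlink exterior (by Fox re-embedding), which is boundary-reducible. Hence there are finitely many choices, by Gordon--Luecke resp.\ Wu's bound on boundary-reducible fillings. The meridians on the remaining boundary tori are then determined homologically.

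Because the paper treats the pieces as unfibered manifolds, it must also bound the longitudes. It does so in three steps:
\begin{itemize}
\item $F_J$ meets each JSJ torus in $w\ne0$ parallel longitudes, since otherwise $\Z^2=\pi_1T$ would embed in the free group $\pi_1F_J$;
\item $F_J\cap M_v$ is Thurston-norm minimizing with $-\chi\le2g(K)-1$;
\item the Thurston norm of a hyperbolic $M_v$ is nondegenerate, so only finitely many classes, hence longitudes, occur.
\end{itemize}
In your setup the longitudes can be read off from the fibrations of the pieces, but the meridians cannot.

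Your fallback route is the paper's second proof (Theorem~\ref{thm:CG} with Corollary~\ref{cor:compression}), but ``finitely many subsurfaces up to symmetry'' is not an argument. Compressions are not subsurfaces. A pseudo-Anosov map can have infinitely many non-minimal compressions even modulo its symmetries. Finiteness of minimal compressions up to symmetry is the content of Theorem~\ref{thm:compression}, proved by the case analysis of Section~\ref{sec:compression}. Nor can that theorem be naively iterated, since symmetries need not extend over a compression.
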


Results analogous to Theorems~\ref{thm:volume},~\ref{thm:dilatation} were established independently by Baldwin--Sivek \cite[Theorem~1.5,Lemma~2.6]{baldwin2025ribbon} using Floer-theoretic techniques. Our proofs are purely topological and give sharper bounds between fibered knots. Theorem~\ref{thm:finite} was independently obtained by Baldwin--Hanselman--Sivek \cite[Corollary~1.3]{baldwin2026ribbon} as a consequence of the stronger result that every knot has at most finitely many ribbon predecessors that are fibered, via more intricate Floer-theoretic arguments.

\subsection{Compressions of surface homeomorphisms}
Our proofs of Theorem~\ref{thm:volume},~\ref{thm:dilatation},~\ref{thm:finite} start with the following consequence of Casson--Gordon \cite{casson1983loop}.

\begin{Thm}[\cite{casson1983loop}]\label{thm:CG}
If $J,K$ are fibered, then $J$ admits a strongly homotopy-ribbon concordance to $K$ in some homotopy $I\times S^3$ if and only if the monodromy of $K$ compresses to that of $J$.
\end{Thm}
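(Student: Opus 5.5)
The plan is to run the Casson--Gordon correspondence between handle decompositions of the concordance complement and compressions of the fiber. First I fix the definitions. A \emph{strongly homotopy-ribbon concordance} $C\subset W$ from $J$ to $K$, with $W$ a homotopy $I\times S^3$, is one whose exterior $X=W\setminus\nu C$ is built from the exterior of $K$ (read from the $K$ end) using only $2$- and $3$-handles. Equivalently, turning it upside down, $X$ is built from $S^3\setminus\nu J$ using only $1$- and $2$-handles. The monodromy $\phi_K\colon S\to S$ \emph{compresses} to $\phi_J\colon S'\to S'$ if there is a handlebody $H$ with $\partial_+H=S$, obtained by attaching $1$-handles to $S'\times I$ along $S\times 1$, together with a homeomorphism $\Phi\colon H\to H$ that extends $\phi_K$ and restricts to $\phi_J$ on $S'$. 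The common boundary circle is fixed throughout.

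\textbf{Direction ($\Leftarrow$).} Suppose $\phi_K$ compresses to $\phi_J$ via $(H,\Phi)$.
\begin{itemize}
\item Form the mapping torus $M_\Phi$ and glue a solid-torus-times-interval along the boundary-circle region. This gives a $4$-manifold $X$.
\item $\partial X$ consists of $S^3\setminus\nu K$ (the mapping torus of $\phi_K$) at one end, $S^3\setminus\nu J$ (the mapping torus of $\phi_J$) at the other, and $T^2\times I$ in between.
\item A relative handle structure of $H$ on $S'\times I$ has only $1$-handles. Crossing with the circle direction of the mapping torus turns each $1$-handle into a $1$-handle and a $2$-handle. So $X$ is built from $S^3\setminus\nu J$ by $1$- and $2$-handles, which is what we want.
\item Filling with $I\times$ (solid torus) along the meridians gives $W$ containing the annulus $C=I\times$ core.
\item To see that $W$ is a homotopy $I\times S^3$: each $1$-handle of $H$ is paired with a $2$-handle, so $W$ is an $h$-cobordism-like object between the two $S^3$ ends. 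I will check $\pi_1(W)=1$ by van Kampen. The point is that $\pi_1(X)$ is normally generated by the meridian, since $\pi_1(H)$ is a free quotient of $\pi_1(S)$ and the meridian normally generates. Once $\pi_1(W)=1$, homology collapses.
\end{itemize}

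\textbf{Direction ($\Rightarrow$).} This is the main content of Casson--Gordon and the main obstacle. Start with $X$ built from $E_J=S^3\setminus\nu J$ by $1$- and $2$-handles.
\begin{itemize}
\item Since $J$ is fibered, the inclusion $\pi_1E_J\to\pi_1X$ is injective. The argument is that the surjection $\pi_1E_K\to\pi_1X$ together with the injectivity of Gordon's lemma and residual finiteness make $\pi_1E_J\to\pi_1X\leftarrow\pi_1E_K$ behave well.
\item More to the point, the commutator subgroups are finitely generated free groups. This forces the infinite cyclic cover $\tilde X$ to be homotopy equivalent to a finite complex.
\item Casson--Gordon then use Stallings fibering (in dimension $4$, relative version) to show that $X$ fibers over $S^1$. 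The fiber is a $3$-manifold $H$ with $\partial H\supset S\cup S'$ and $\pi_1$ free, built from $S'\times I$ by $1$-handles. Hence $H$ is a compression body, and the monodromy of this fibration restricts to $\phi_K$ and $\phi_J$.
\end{itemize}

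Concretely, I would cite \cite{casson1983loop} for the fibering step: a $4$-dimensional fibering criterion applied to the Seifert-surface-lifted handle structure. I would then verify the compression-body identification via Dehn's lemma/loop theorem applied to the kernel of $\pi_1S\to\pi_1H$.
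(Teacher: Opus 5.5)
Your ($\Leftarrow$) direction is the paper's argument. You form the mapping torus $M_\Phi$, glue in $I\times S^1\times D^2$ along the meridians, and note that each $1$-handle of the compression body contributes a $1$-handle and a $2$-handle relative to $S^3\setminus\nu J$. Your check that $W$ is a homotopy $I\times S^3$ is also correct. Surjectivity of $\pi_1S\to\pi_1H$ makes $\pi_1M_\Phi$ a quotient of the group of $K$, so filling the meridian kills $\pi_1$. Equal numbers of $1$- and $2$-handles then force $H_*(W,\partial_-W)=0$.

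The ($\Rightarrow$) direction has a genuine gap. Casson--Gordon do not show that the concordance exterior $X$ fibers over $S^1$, and there is no ``relative $4$-dimensional Stallings theorem'' to cite. Stallings' theorem is $3$-dimensional, and the Browder--Levine and Farrell fibering theorems need dimension at least $6$. Whether such exteriors fiber is a separate and much harder question. The intermediate steps fail too:
\begin{itemize}
\item Freeness of $\pi_1\tilde X$ is a theorem of Kochloukova, not a formal consequence of your hypotheses.
\item Even granting freeness, nothing makes $\tilde X$ homotopy equivalent to a finite complex, and finite homotopy type would still not give a fibration in dimension $4$.
\item Injectivity of $\pi_1E_J\to\pi_1X$ (Gordon's lemma, unrelated to fiberedness) plays no role.
\item Your closing loop-theorem step presupposes the fibration it is meant to analyze.
\end{itemize}

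The missing idea is a reduction that lets you use what Casson--Gordon actually prove. Delete a neighborhood of an arc on the concordance joining $J$ to $K$. What remains is a homotopy-ribbon disk $D$ for $(-J)\#K$ in a homotopy $4$-ball $B$. The knot $(-J)\#K$ is fibered, with closed monodromy $(-\phi_J)\cup\phi_K$ on the closed surface $F=(-F_J)\cup F_K$. Casson--Gordon's Theorem~5.1 uses their loop theorem for duality spaces, applied to the infinite cyclic cover $W$ of $B\setminus D$. It says this closed monodromy extends over a handlebody $H$ with $\pi_1H=\pi_1W/(\pi_1W)_\omega$, compatibly with the maps from $\pi_1F$.

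The strongly homotopy-ribbon hypothesis is then used exactly once. Since $B\setminus D$ is obtained from $S^3\setminus K$ by $2$- and $3$-handles, $\pi_1(S^3\setminus K)\to\pi_1(B\setminus D)$ is onto. Passing to infinite cyclic covers, $\pi_1F_K\to\pi_1W$ is onto, hence so is $\pi_1F_K\to\pi_1H$. By the loop theorem, a handlebody in which the boundary subsurface $F_K$ is $\pi_1$-surjective is a compression body with exterior boundary $F_K$ and interior boundary $F_J$. The extension of the closed monodromy is then exactly a compression of $\phi_K$ to $\phi_J$.
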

See Section~\ref{sec:prelim} for the definition of strongly homotopy-ribbon concordances and of compressions of surface homeomorphisms. We write $J\le_hK$ if $J$ admits a strongly homotopy-ribbon concordance to $K$ in some homotopy $I\times S^3$. Every ribbon concordance is a strongly homotopy-ribbon concordance, and in fact the statements of Theorem~\ref{thm:volume},~\ref{thm:dilatation},~\ref{thm:finite} can be strengthened by replacing $\le$ with $\le_h$. To our knowledge, it is open whether $\le_h$ is a strictly finer relation than $\le$.

We now explain a second approach to proving Theorem~\ref{thm:finite}, which leads to some results of independent interest.

Let $S$ be a compact oriented surface with (possibly empty) boundary and $\phi\colon S\to S$ be an orientation-preserving surface homeomorphism (not necessarily rel boundary). Motivated by the study of cobordisms of surface homeomorphisms \cite{bonahon1983cobordism} as well as of fibered ribbon knots \cite{casson1983loop}, Casson--Long \cite{casson1985algorithmic} proved the following results.
\begin{Thm}[{\cite{casson1985algorithmic}}]\label{thm:CL}
Let $\phi\colon S\to S$ be a surface homeomorphism as above.
\begin{enumerate}[(1)]
\setcounter{enumi}{-1}
\item There is an algorithm to determine whether $\phi$ admits a nontrivial compression.
\item If $\phi$ is pseudo-Anosov, then it admits at most finitely many minimal compressions.
\item If $\phi$ is pseudo-Anosov, there is an algorithm to find all minimal compressions of $\phi$.
\end{enumerate}
\end{Thm}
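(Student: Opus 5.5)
The plan is to reduce all three statements to one geometric lemma: an invariant compression body always contains a meridian disk of explicitly bounded complexity. I use the conventions of Section~\ref{sec:prelim}. A compression of $\phi$ is a compression body $V$ with $\partial_+V=S$ together with a homeomorphism $\Phi\colon V\to V$ extending $\phi$. Such a $V$ is determined by the set $\mathcal M(V)$ of essential simple closed curves in $S$ that bound disks in $V$, equivalently by the normal subgroup $N_V=\ker(\pi_1S\to\pi_1V)$. Extendability of $\phi$ is then equivalent to $\phi_*(N_V)=N_V$, that is, to $\phi(\mathcal M(V))=\mathcal M(V)$. I read a \emph{minimal} compression as one for which $\mathcal M(V)$ is minimal among nonempty $\phi$-invariant disk sets. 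Hence $V$ is generated, as a compression body, by the disks bounded by the orbit $\{\phi^n(c)\}_{n\in\Z}$ of any single curve $c\in\mathcal M(V)$. The whole argument therefore reduces to producing a \emph{short} curve in $\mathcal M(V)$.

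\textbf{Key lemma (length bound).} Fix an invariant train track $\tau$ for the pseudo-Anosov $\phi$, carrying the unstable lamination, with a fixed transverse measure $\mu$ on it. The aim is a constant $C=C(S,\tau,\phi)$, computable from $\phi$, with the following property: every $\phi$-invariant $\mathcal M(V)\neq\emptyset$ contains a curve of combinatorial length at most $C$ relative to $\tau$ and to a fixed triangulation.
\begin{enumerate}[(a)]
\item Start with any meridian disk $D$ and iterate. The curves $\phi^n(\partial D)$ become carried by $\tau$, with weights growing like $\lambda^n$ and projectively converging to $\mu$. All of them still bound disks in $V$.
\item Consider two meridians carried by $\tau$ with nearly proportional weights, such as $\phi^n(\partial D)$ and $\phi^{n+1}(\partial D)$. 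Cut-and-paste along an outermost arc of intersection of the two disks produces new meridian disks. Because the two curves run nearly parallel along the branches of $\tau$, the surgered curve can be chosen to follow one curve for a long stretch and return along the other across a short arc.
\item The essential input is that the two curves are close in $\tau$ but not isotopic. This should force a surgered curve whose weights are bounded by a constant depending only on the switch structure of $\tau$.
\end{enumerate}
This is the step I expect to be the main obstacle. The difficulty is controlling where the outermost arcs sit relative to $\tau$ and ensuring the surgered curve is essential. My first attempt would be to put the disks in normal position relative to a $\phi$-invariant singular flat structure, and to use that leaves of the unstable foliation cannot bound disks in $V$. The latter holds because $\phi$ has no reducing curves and $\mu$ fills $S$.

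\textbf{Consequences.} With the lemma, finiteness (1) follows directly. Every minimal compression is generated by the orbit of some curve of length $\le C$, and there are finitely many such curves. For the algorithm (2), enumerate all curves $c$ of length $\le C$. For each $c$, build the compression body $V_k$ obtained by attaching $2$-handles along $c,\phi(c),\dots,\phi^k(c)$ (and likewise for negative powers), increasing $k$. We stop once $\phi^{\pm(k+1)}(c)$ already bounds a disk in $V_k$. Whether a given curve is null-homotopic in the free-product-with-surface-groups $\pi_1V_k$ is decidable. Moreover, by Dehn's lemma, null-homotopy in $\pi_1 V_k$ is equivalent to bounding a disk in $V_k$. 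Stabilization must occur, since genus bounds the number of nonparallel disjoint meridian disks and so the chain $V_k$ is eventually constant. We must also verify that each resulting $V$ is nontrivial, meaning $\partial_-V$ is not $S$ itself with $c$ inessential.

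\textbf{Finishing.} Among the finitely many $\phi$-invariant compressions produced, discard those properly containing another: containment $\mathcal M(V')\subseteq\mathcal M(V)$ is checked on generators by the same word-problem test. The remaining ones are exactly the minimal compressions. Part (0) for general $\phi$ would follow by applying the Nielsen--Thurston decomposition. In that setting one handles the periodic pieces by finite-group equivariant Dehn lemma arguments and the pseudo-Anosov pieces by the above. The reducing multicurve itself is also a candidate compressing set, and is checked directly.
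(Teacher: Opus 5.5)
\noindent The paper does not reprove this theorem; it quotes Casson--Long. Its own use of their work (Case~1.3 in the proof of Theorem~\ref{thm:compression}(1), and Step~1 of \textbf{1.3} in Section~\ref{sbsec:comp_algo}) shows that the argument rests on two separate inputs. The first is a computable upper bound on the length of a \emph{shortest} curve compressing in a nontrivial compression (the proof of \cite[Theorem~1.2]{casson1985algorithmic}). The second is, for each such short curve $\gamma$, finiteness and an explicit list of the minimal compressions containing $\gamma$ (\cite[Theorem~2.1, Corollary~2.5]{casson1985algorithmic}). Your proposal supplies neither.

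The length bound remains a heuristic, as you acknowledge, and the mechanism in (b)--(c) cannot work as stated. With the natural disks $\Phi^n(D),\Phi^{n+1}(D)$, the configuration is just the $\Phi^n$-image of $(D,\Phi(D))$. So cut-and-paste yields exactly the $\phi^n$-images of the curves obtained from $(D,\Phi(D))$, whose $\tau$-weights grow like $\lambda^n$; projective closeness to $\mu$ buys nothing. Nothing in (b)--(c) is independent of the starting disk either: for instance $i(\phi^n(\partial D),\phi^{n+1}(\partial D))=i(\partial D,\phi(\partial D))$ depends on $D$. Some extremal choice is needed, e.g.\ a compressing curve of minimal length against which the cut-and-paste with an iterate can be played. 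This is in the spirit of the paper's own choices of $\gamma$ minimizing $|\gamma\cap\delta|$ or length.

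The more serious problem is reducing everything to a single curve. You claim that a minimal $V$ is ``generated'' by the orbit of any meridian $c$, deduce (1) from finiteness of short curves, and build $V_k$ by attaching $2$-handles along $c,\phi(c),\dots,\phi^k(c)$. But for pseudo-Anosov $\phi$ and non-peripheral $c$, some $\phi^n(c)$ must meet $c$; otherwise the orbit would be a $\phi$-invariant essential multicurve. So these curves cannot all be attaching circles: once $c$ is compressed, $\phi^n(c)$ no longer lies on the boundary. The sub-compression body spanned by disks $D,D'$ with $\partial D=c$ and $\partial D'=\phi^n(c)$ is $\nu(S\cup D\cup D')$ with spheres capped off. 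It depends on how the double arcs of $D\cap D'$ pair up the points of $c\cap\phi^n(c)$, and its disk set contains the surgered curves, not just the orbit. Hence $c$ alone does not determine $V$, and you give no reason why only finitely many minimal compressions contain a given $c$. That is precisely the extra input \cite[Corollary~2.5]{casson1985algorithmic} which the paper invokes after the length bound. For the same reason, your stopping test (``$\phi^{\pm(k+1)}(c)$ bounds in $V_k$'') does not certify that $\phi$ extends. One must instead check $\phi_*(N)=N$ for $N=\ker(\pi_1S\to\pi_1V)$ on a generating set. As written, neither (1) nor (2) follows even granting your length bound.
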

The pseudo-Anosov assumption in Theorem~\ref{thm:CL} is necessary as stated. However, by taking additional symmetries into account, we are able to remove this assumption and obtain the following generalization of Casson--Long's main results.

\begin{Thm}\label{thm:compression}
Let $\phi\colon S\to S$ be a surface homeomorphism as described above.
\begin{enumerate}[(1)]
\item $\phi$ admits at most finitely many minimal compressions up to symmetries of the pair $(S,\phi)$.
\item There is an algorithm to find all minimal compressions of $\phi$ up to symmetries of $(S,\phi)$.
\end{enumerate}
\end{Thm}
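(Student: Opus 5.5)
The plan is to reduce to the pseudo-Anosov case of Theorem~\ref{thm:CL} by working piece by piece with the Nielsen--Thurston decomposition, and to control the non-pseudo-Anosov pieces through their symmetries.

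Recall the setting. A compression of $\phi$ is given by a handlebody-type 3-manifold (compression body) $H$ with $\partial_+H=S$ and a homeomorphism $\Phi\colon H\to H$ extending $\phi$. Its restriction to $\partial_-H$ is the compressed map, and minimality refers to a maximal set of compressing discs, equivalently a $\phi$-invariant compressing system that cannot be enlarged.

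\textbf{Step 1: normalize the compressing system.} Fix the canonical reducing multicurve $\Gamma$ of $\phi$. By an isotopy of $\Phi$, and using the standard innermost-disc argument together with uniqueness of canonical reduction systems, I would arrange that the disc system $D$ for a minimal compression meets $\Gamma$ minimally. The goal is a normal form: the invariant compressing subsurface pieces are unions of components of $S\setminus\Gamma$, glued along annuli with controlled twisting. More precisely, I would show that $\partial D$ can be taken to lie either
\begin{itemize}
\item inside the pseudo-Anosov pieces, where Theorem~\ref{thm:CL}(1) gives finitely many options, or
\item inside the periodic pieces.
\end{itemize}
This step is carried out by an invariant-train-track or lamination argument. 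A compressing disc crossing a pseudo-Anosov piece must have boundary of bounded complexity relative to the invariant laminations, since $\Phi$-iterates of $D$ remain compressing discs and the boundaries of discs lie in the limit set.

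\textbf{Step 2: handle the periodic pieces via symmetries.} After passing to a power, a periodic piece is the identity on a subsurface with twists along $\Gamma$. The space of invariant disc systems there is infinite, but the centralizer of $\phi$, which contains Dehn twists along $\Gamma$ and mapping classes commuting with the finite-order pieces, should act on it with finitely many orbits. A periodic map is an isometry of some hyperbolic (or flat) structure and thus generates a finite group $G$. Invariant compressions correspond to $G$-equivariant handlebodies, equivalently handlebody structures on the quotient orbifold, and these are finite up to the mapping class group of the quotient orbifold, which lifts into the symmetries of $(S,\phi)$. The gluing data along $\Gamma$ differ by twists, and twists along $\Gamma$ commute with $\phi$ up to isotopy whenever they are compatible with the twist parameters, so the twisting is also absorbed by symmetries. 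This yields part (1).

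\textbf{Step 3: algorithm (part (2)).} Part (2) follows by making each step effective:
\begin{enumerate}
\item compute the Nielsen--Thurston decomposition algorithmically (Bestvina--Handel);
\item apply Theorem~\ref{thm:CL}(2) on the pseudo-Anosov pieces;
\item enumerate the finitely many equivariant handlebody structures on the periodic orbifolds up to homeomorphism;
\item enumerate the gluing choices modulo twists;
\item finally, check minimality and remove duplicates, using that equivalence of pairs can be decided by solving the conjugacy problem in the mapping class group.
\end{enumerate}

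\textbf{Main obstacle.} I expect the main difficulty to be Step 1: showing that in a minimal compression the discs can be isotoped so as not to essentially cross reducing curves adjacent to pseudo-Anosov pieces. The interaction of discs with reducing annuli, where twisting can produce infinitely many non-equivalent candidates, is exactly where the symmetry quotient must be used carefully. I would first try an outermost-arc argument on $D\cap(\Gamma\times I)$ combined with $\Phi$-invariance.
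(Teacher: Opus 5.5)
\paragraph{Step 1 fails.} Your normal form is false, and it fails exactly where the theorem has its substance. Let $\psi$ be pseudo-Anosov on a surface $S$ with nonempty boundary, and let $\phi=D(\psi)$ be its double on $D(S)$. The canonical reduction system of $\phi$ is $\delta=\partial S$. Then $I\times S$ with $\Phi=\mathrm{id}_I\times\psi$ is a minimal compression of $\phi$, yet no essential curve disjoint from $\delta$ compresses in it: $\{0\}\times S\hookrightarrow I\times S$ is $\pi_1$-injective, and the curves of $\delta$ are boundary curves of $S$, hence nontrivial in $\pi_1(S)$. So every compressing disc crosses the reducing curves between two pseudo-Anosov pieces. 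No combination of Theorem~\ref{thm:CL} on pseudo-Anosov pieces, orbifold data on periodic pieces, and twisting along $\delta$ produces this compression. Your ``main obstacle'' cannot be argued away; it is a genuine family of minimal compressions that must be classified. You also have minimality backwards. It means $C$ is not a composition of two nontrivial $\Phi$-invariant compressions, i.e.\ compressing as little as possible, not a disc system that cannot be enlarged. Under your reading the statement is false: the pseudo-Anosov $T_\alpha\circ T_\beta^{-1}$ on $\Sigma_3$ that compresses to $\mathrm{id}_{T^2}$ has infinitely many handlebody compressions, one for each slope on $T^2$. Its symmetry group is virtually $\langle\phi\rangle$, so it has only finite orbits on compressions.

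\paragraph{What is missing.} The missing idea is a cut-and-paste analysis across $\delta$. Choose a compressing curve $\gamma$ with $|\gamma\cap\delta|$ minimal, and let $\gamma_q$ be a pulled-tight representative of $\phi^{Nq}(\gamma)$ for large $q$. Analysing innermost double arcs of the discs $D,D'$ they bound forces $|\gamma\cap\delta|\in\{0,2,4\}$, and $4$ is then excluded. When $|\gamma\cap\delta|=2$, $\gamma\cup\gamma_q$ fills the adjacent pseudo-Anosov components. Capping off the sphere components of the inner boundary of $\nu(S\cup D\cup D')$ then exhibits either a product region joining two pseudo-Anosov pieces or a twisted $I$-bundle over one. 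These are encoded by one of:
\begin{itemize}
\item an orientation-reversing $\theta$ intertwining $\phi$ rel the fixed boundaries;
\item an orientation-reversing square root of $\phi^M$;
\item a free orientation-reversing involution commuting with $\phi^M$.
\end{itemize}
Finiteness up to symmetry then needs two inputs absent from your plan.
\begin{itemize}
\item Uniqueness of the periodic splitting sequence, to get finitely many square roots and involutive conjugators (Lemmas~\ref{lem:square_root} and~\ref{lem:involutive_conjugator}).
\item There are infinitely many admissible $\theta$, differing by elements centralizing $\phi^M$ and by boundary twists, so you must prove they are all related by symmetries of $(S,\phi)$. This uses Lemma~\ref{lem:commutator_no_boundary_twist}: a commutator isotopic to the identity is so rel boundary, which ultimately uses that nontrivial powers of boundary Dehn twists are not commutators. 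For the algorithm, you also need the fractional Dehn twist correction of Lemma~\ref{lem:FDTC_sufficient}.
\end{itemize}

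\paragraph{Smaller points.} In Step~2, only a finite-index subgroup of the orbifold mapping class group lifts to rel-boundary symmetries. You also need a Bonahon-style cut-and-paste argument to see that a minimal compression of a periodic piece compresses the orbit of a single curve. In Step~3, removing duplicates means deciding equivalence under the centralizer $C(S,\phi)$. That requires computing its generators and tracking boundary permutations and twists, not just solving the conjugacy problem.
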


See Section~\ref{sbsec:comp_ex} for a precise explanation of the minimality condition in Theorem~\ref{thm:CL} and Theorem~\ref{thm:compression} and of the symmetries involved in Theorem~\ref{thm:compression}, as well as some examples. It suffices to say here that minimality means compressing by the least possible amount.

One cannot iterate Theorem~\ref{thm:compression} itself to deduce a version without the minimality assumption (which is false), since the relevant symmetries may not extend. Nevertheless, finiteness remains valid if one only remembers the interior boundary of the compressions.
\begin{Cor}\label{cor:compression}
Every surface homeomorphism $\phi$ of a compact oriented surface compresses to only finitely many surface homeomorphisms $\phi'$ (up to isotopy and conjugation). Moreover, there is an algorithm to find all such $\phi'$.\qed
\end{Cor}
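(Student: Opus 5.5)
Since the statement ends with $\qed$, it should follow from Theorem~\ref{thm:compression} by an induction on complexity. The two inputs are that every compression factors as a finite sequence of minimal compressions, and that each minimal compression strictly lowers a complexity such as the genus or $-\chi$ of the surface. Given these, every $\phi'$ to which $\phi$ compresses is reached by a chain $\phi=\phi_0\to\phi_1\to\cdots\to\phi_k=\phi'$ of minimal compressions. The length $k$ is bounded by the complexity of $S$.

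For finiteness, I would argue by induction on the complexity of $S$ and prove the slightly stronger claim that the set $\mathcal{C}(\phi)$ of surface homeomorphisms to which $\phi$ compresses, up to isotopy and conjugation, is finite. Theorem~\ref{thm:compression}(1) gives finitely many minimal compressions of $\phi$ up to symmetries of $(S,\phi)$, with resulting homeomorphisms $\psi_1,\dots,\psi_m$ on surfaces of smaller complexity. Suppose a symmetry $h$ of $(S,\phi)$ carries one minimal compression to another. Then $h$ restricts to a homeomorphism between the two compressed surfaces that conjugates the induced maps. So the interior-boundary homeomorphism depends, up to conjugation, only on the symmetry class. This is the point of the remark before the corollary: $h$ need not extend over later compressions, but we only remember $\phi'$ up to conjugacy, so nothing is lost.

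The remaining issue is that a compression of $\phi$ to $\phi'$ need not literally factor through one of the chosen representatives $\psi_i$. It factors through some minimal compression, which equals $h$ applied to a representative. Transporting the rest of the compression by $h$ shows that $\phi'$ lies, up to conjugacy, in $\mathcal{C}(\psi_i)$. Hence
\[
\mathcal{C}(\phi)\subseteq\{\phi\}\cup\bigcup_{i=1}^m \mathcal{C}(\psi_i),
\]
and each $\mathcal{C}(\psi_i)$ is finite by induction. The base case is a surface admitting no nontrivial compression, where $\mathcal{C}(\phi)=\{\phi\}$. If disconnected surfaces or surfaces with disk components arise, the complexity should be chosen to handle them, for example a lexicographic order on genera of components.

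The algorithm follows the same recursion. Apply Theorem~\ref{thm:compression}(2) to list the $\psi_i$, recurse on each, and take the union. Termination follows from the strict decrease in complexity. Duplicates up to conjugation can be removed if desired, using a solution to the conjugacy problem in mapping class groups.

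The main obstacle is justifying the factorization into minimal compressions, which depends on the precise definitions in Section~\ref{sbsec:comp_ex}. I would first try to show that any compression, viewed as a handlebody-type cobordism with a compressing system of disks, contains a minimal sub-compression: take a maximal proper sub-system of compressing data that is still $\phi$-equivariant up to isotopy. I would then check that the remaining compression is a compression of the intermediate map.
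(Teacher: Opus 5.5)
Your proposal is correct and matches the argument the paper leaves implicit behind the \qed and the preceding remark: induct on complexity, take the finitely many minimal first steps from Theorem~\ref{thm:compression}(1), and use that symmetry-related minimal compressions have conjugate interior-boundary maps, so the rest of any compression transports to a representative. The step you flag as the main obstacle is immediate from the paper's definition of minimality (not decomposable into two nontrivial $\Phi$-invariant compressions), provided your complexity strictly drops under every nontrivial compression. Genus does not drop for boundary-parallel curves, and $-\chi$ does not drop when $T^2$ or $S^2$ compresses to $\emptyset$, but $\sum_F 3^{g(F)+|\partial F|}$ over components $F$ does drop.
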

We deduce Theorem~\ref{thm:finite} as a corollary of Theorem~\ref{thm:CG} and Corollary~\ref{cor:compression}.

\begin{Cor}\label{cor:find_predecessors}
For any given fibered knot $K$, there are only finitely many knots $J$ with $J\le_hK$. Moreover, there is an algorithm to find all such $J$.
\end{Cor}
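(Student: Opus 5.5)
The plan is to combine Theorem~\ref{thm:CG} with Corollary~\ref{cor:compression}, and then recover knots from monodromies. Let $K$ be fibered with fiber surface $S$ and monodromy $\phi_K\colon S\to S$. Suppose $J\le_h K$. By the remark after Theorem~\ref{thm:CG}, $J$ is also fibered; this uses \cite{silver1992knot,kochloukova2006some}, whose group-theoretic argument applies equally to strongly homotopy-ribbon concordances. Theorem~\ref{thm:CG} then says $\phi_K$ compresses to $\phi_J$. By Corollary~\ref{cor:compression}, $\phi_J$ belongs, up to isotopy and conjugation, to a finite list $\phi'_1,\dots,\phi'_N$ that can be computed from $\phi_K$.

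The second step is to show that each class in this list corresponds to at most finitely many knots, indeed to at most one. Each $\phi'_i$ lives on a surface $S'_i$ with one boundary component. The mapping torus of $\phi'_i$, Dehn filled along the slope given by the boundary of $S'_i$ (the longitude), determines a closed $3$-manifold together with a knot in it, namely the core of the filling. The exterior of a fibered knot is the mapping torus of its monodromy, and the meridian is pinned down by the rel-boundary structure. So the pair $(S^3,J)$ is determined up to homeomorphism by the isotopy and conjugacy class of $\phi_J$, provided the convention records enough boundary data to fix the meridian slope. The conjugation is by homeomorphisms that may reverse orientation only in a controlled way, which introduces at most a mirror ambiguity. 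Either way, there are at most finitely many $J$ for each $i$, and together these steps give the finiteness statement.

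For the algorithm, we enumerate $\phi'_1,\dots,\phi'_N$ using Corollary~\ref{cor:compression}. For each one we build the mapping torus, perform the filling, and decide whether the result is $S^3$. This is decidable by Rubinstein--Thompson $3$-sphere recognition, which is applicable since we have explicit triangulations. When the result is $S^3$, we output the core knot. The output may include knots that are not actually $\le_h K$ but only share a compressible monodromy class. This is harmless, because Theorem~\ref{thm:CG} is an if-and-only-if: every $\phi'_i$ that arises as a fibered knot monodromy in $S^3$ does give $J\le_h K$.

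The main obstacle I expect is bookkeeping in the second step. Corollary~\ref{cor:compression} classifies $\phi'$ only up to isotopy (not rel boundary) and conjugation, whereas a knot's monodromy is usually taken rel boundary, and the fractional Dehn twist coefficient at the boundary is lost. I would handle this by noting that the meridian slope is recovered as the unique filling slope compatible with the fibration: it meets each fiber once, which determines it up to twisting along the longitude. Different twists then give different fillings, possibly infinitely many candidates. To restore finiteness I would instead argue that the compression already fixes the boundary behaviour: the outer boundary of the compression is $\partial S$ with the given rel-boundary twisting inherited from $\phi_K$. Alternatively, I would invoke the fact that among the fillings of a fibered manifold, at most one gives $S^3$ in each relevant slope family (Gordon--Luecke). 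Gordon--Luecke gives uniqueness of the knot once the exterior is fixed, so at most one knot per $\phi'_i$, up to mirror.
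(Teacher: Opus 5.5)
Your proposal follows the paper's route. Theorem~\ref{thm:CG} turns $J\le_hK$ into a compression of $\phi_K$ to $\phi_J$, and Corollary~\ref{cor:compression} makes the set of such $\phi_J$ finite and computable.

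The only real difference is the last step. The paper cites \cite[Lemma~1.3(2)]{miyazaki1994nonsimple}, together with geometrization. This shows that every compression of $\phi_K$ to a homeomorphism of a connected surface is already the monodromy of a knot $J\subset S^3$ with $J\le_hK$. So the algorithm simply outputs the connected entries of the list. Your $3$-sphere recognition test is sound, by the sufficiency half of Theorem~\ref{thm:CG}, but it is redundant: by that lemma it never rejects a connected candidate.

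Of your two ways of handling the boundary twisting, the first is the right one, and it is exactly the paper's convention. A compression of a rel-boundary map is required to be the identity on the vertical boundary. Hence the interior map carries a well-defined rel-boundary class, which fixes the meridian and reconstructs $(S^3,J)$ directly. The Gordon--Luecke fallback suffices for finiteness, but not by itself for the algorithm. Searching over slopes $\mu+n\lambda$ for an $S^3$ filling need not terminate on a candidate that admits none.

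Two slips should be corrected.
\begin{itemize}
\item You must fill along the meridian, the slope meeting each fiber once, not along $\partial S_i'$. Filling along the longitude yields the $0$-surgery, which has $b_1=1$ and is never $S^3$. As literally written, your test would reject every candidate.
\item Not every $\phi_i'$ in the list lives on a connected surface with one boundary component; see the disconnected interior boundaries in Section~\ref{sbsec:C21(4_1)}. You should first discard the disconnected ones.
\end{itemize}
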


\begin{Rmk}
Even if one is only interested in pseudo-Anosov homeomorphisms, it appears necessary to study minimal compressions of periodic and reducible homeomorphisms in order to obtain Corollary~\ref{cor:compression} and Corollary~\ref{cor:find_predecessors}.
\end{Rmk}

Corollary~\ref{cor:find_predecessors} in particular gives an algorithm to determine whether a fibered knot is strongly homotopy-ribbon in a homotopy $B^4$. Modulo the slice-ribbon conjecture and the smooth $4$-dimensional Poincar\'e conjecture, this can in principle be used to determine the smoothly slice status of any fibered knot, in particular those in \cite[Table~5]{dunfield2025ribbon} (the implication may also go in other directions). In practice, the most interesting part of the algorithm comes from that of Casson--Long \cite{casson1985algorithmic}.\smallskip

Since Khovanov homology is only known to obstruct strongly homotopy-ribbon concordance between knots in the standard $I\times S^3$ \cite[Theorem~1.4]{gujral2022khovanov}, one may hope to find an exotic $I\times S^3$, hence an exotic $4$-sphere, by combining the algorithm in Corollary~\ref{cor:find_predecessors} with the obstruction from Khovanov homology.\medskip

Finally, our study of minimal compressions of surface homeomorphisms provides a conceptually simpler perspective on results of Miyazaki \cite{miyazaki1994nonsimple} on nonsimple (i.e. satellite) fibered ribbon knots. Rather than attempting a comprehensive treatment, we illustrate the idea by showing that the $(2,1)$-cable of the figure $8$ knot is not strongly homotopy-ribbon, and by giving simple proofs of the following results in the spirit of Miyazaki.

\begin{Thm}\label{thm:miyazaki}
Let $J_1,\cdots,J_m,K_1,\cdots,K_n$ be prime fibered knots, $m,n\ge0$. Then $J_1\#\cdots\#J_m\le_hK_1\#\cdots\#K_n$ if and only if there exist knots $K_{i,j}$, $1\le j\le\ell_i$, $1\le i\le n$, for some $\ell_i\ge0$, such that
\begin{itemize}
\item $K_{i,1}\#\cdots\#K_{i,\ell_i}\le_hK_i$ for all $i$;
\item the collection of knots $K_{i,j}$ consists of exactly the knots $J_1,\cdots,J_m$ and some other $2k$ knots $J_{m+1},-J_{m+1},\cdots,J_{m+k},-J_{m+k}$ that come in (reversed) mirrored pairs, $k\ge0$.
\end{itemize}
\end{Thm}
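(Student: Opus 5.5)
The plan is to translate everything into compressions of monodromies via Theorem~\ref{thm:CG} and then analyze how a compression interacts with the connected-sum decomposition of the fiber. For a connected sum $K=K_1\#\cdots\#K_n$ of fibered knots, the fiber $S$ is a boundary connected sum of the fibers $S_i$, and $\phi_K$ is the corresponding boundary sum of the $\phi_{K_i}$. Up to isotopy, $\phi_K$ preserves a collection of disjoint essential arcs $\alpha_1,\dots,\alpha_{n-1}$ separating $S$ into the pieces $S_i$. Equivalently, since $K$ is prime-decomposed, the mapping torus contains the swallow-follow / decomposing annuli. A compression of $\phi$ is given by a handlebody-type cobordism: a $3$-manifold $V$ with $\partial V\supset S\sqcup S'$, built from $I\times S$ by attaching $2$-handles (the compressing disks), together with an extension $\Phi\colon V\to V$ of $\phi$ restricting to $\phi'$. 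By definition, $S'$ is obtained by compressing $S$ along a $\Phi$-invariant (up to isotopy) family of disks.

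For the ``if'' direction I would directly build compressions. Each hypothesis $K_{i,1}\#\cdots\#K_{i,\ell_i}\le_h K_i$ gives a compression of $\phi_{K_i}$ to the boundary sum of the $\phi_{K_{i,j}}$. Boundary-summing these compressions gives a compression of $\phi_K$ to the sum of all $\phi_{K_{i,j}}$. It remains to compress away each pair $J\#-J$. Its monodromy is $\phi_J\natural\overline{\phi_J}$ on $S_J\natural\overline{S_J}$, which is the boundary of $S_J\times I$ with the product map $\phi_J\times\mathrm{id}$. Compressing along the disks $a\times I$, for a system of arcs $a$ cutting $S_J$ into a disk, is $\Phi$-invariant as a system up to isotopy (this is the standard argument that $J\#-J$ is ribbon). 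This yields a compression to the trivial map on a disk. Finally, I would reorder the summands: boundary sum of monodromies is commutative up to conjugation, so the order of the $K_{i,j}$ does not matter.

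For the ``only if'' direction, which is the main obstacle, I start with a compression $(V,\Phi)$ from $\phi_K$ to $\phi_J$, where $J=J_1\#\cdots\#J_m$. The key step is to isotope $V$ and $\Phi$ so that the compressing disk system is disjoint from the invariant separating arcs $\alpha_k$, or more precisely from the product disks $\alpha_k\times I\subset V$.

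1. I would first use an innermost-arc/outermost-arc argument to make $\Phi$-invariant compressing disks meet the disks $\alpha_k\times I$ minimally. This uses the canonical (JSJ/Nielsen--Thurston) nature of the decomposing annuli in the mapping torus, so that the relevant decomposition is $\Phi$-equivariant up to isotopy.
2. After this, the compression splits as a boundary sum of compressions $\phi_{K_i}\to\psi_i$. Here $\psi_i$ is the monodromy of some fibered knot $L_i$ with $L_i\le_h K_i$, by Theorem~\ref{thm:CG} applied on each piece. I would need to check that each piece's compressed surface still has one boundary component, i.e.\ is a fibered knot monodromy.
3. The resulting boundary sum of the $\psi_i$ must be conjugate to $\phi_J$. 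Uniqueness of prime decomposition of fibered knots (equivalently, of the decomposing arc systems of a monodromy, up to the canonical ambiguity) then expresses each $L_i=K_{i,1}\#\cdots\#K_{i,\ell_i}$ with primes, and identifies the multiset of all $K_{i,j}$ with the $J$'s.

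This still does not account for the extra pairs. I expect them to arise because the disks $\alpha_k\times I$ cannot always be made disjoint from the compression. When a compressing disk runs across a separating arc, one instead gets a compression in which some summand cancels against a summand in another factor. The step I expect to be hardest is showing that such cancellations only happen in pairs $J'\#-J'$. My first attempt would be to use minimal compressions (Theorem~\ref{thm:compression}) and argue inductively on genus:

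1. Any compression factors through minimal ones.
2. A minimal compression of a boundary sum is either a sum of a minimal compression of one summand with the identity, or a ``cross'' compression along a disk meeting a separating arc essentially.
3. A cross compression that is $\Phi$-invariant forces the two adjacent prime pieces it merges to be mirror-reversed, cancelling as $J'\#-J'$. I would prove this by analyzing how the canonical reduction system of the monodromy interacts with the arc.

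Induction on the genus of $S$ then assembles the statement.
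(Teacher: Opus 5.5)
Your sufficiency direction is fine; the paper simply notes $U\le J_i\#(-J_i)$. The gap is in necessity. Your step 3 claims that a $\Phi$-invariant cross compression forces the two prime summands it merges to be reversed mirrors. That claim is essentially the whole content of the theorem, you offer no mechanism for it beyond ``analyze the reduction system,'' and as stated it is false.

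At the level of $\phi_K$, compressing $\partial F_K$ is a minimal cross compression that exists for every $K$ and merges nothing. After capping off, take $K_1=C_{2,1}(T_{2,3})$ and $K_2=C_{2,-1}(T_{2,3})$.
\begin{itemize}
\item The root pieces are periodic with fractional Dehn twist coefficients $\pm\frac12$, so they glue across the annulus $\hat{D^2_2}$ into a single periodic piece.
\item That piece has quotient orbifold an annulus with two order-$2$ cone points.
\item Compressing the lifts of the curve enclosing both cone points is a minimal compression of $\hat\phi_{K_1\#K_2}$ (form \textbf{1.2}) that crosses between the summands. Its output is $(\hat\phi_{T_{2,3}\#T_{2,3}})^{1/2}$.
\end{itemize}
Yet $K_2\ne -K_1=C_{2,-1}(-T_{2,3})$. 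A single minimal cross compression only matches the \emph{root JSJ pieces}; the mirror relation between summands appears only after recursing into the companions. Likewise, for pseudo-Anosov roots the relevant minimal compression is a whole product region (form \textbf{2.1.1}), not a disk. Even then you need a boundary-parametrization argument to upgrade ``$\theta$ intertwines rel the fixed boundaries'' to ``rel boundary'' before concluding $P_2=-P_1$.

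The paper's argument supplies the missing steps.
\begin{enumerate}
\item[(i)] By transitivity, assume there is no intermediate knot between $J$ and $K$. Then a single minimal subcompression $C'\subset C$ controls everything. Without this reduction your splitting plan cannot cope with intra-summand and cross compressions interleaving.
\item[(ii)] Regard $\phi_K$ as $\mathrm{id}_{D^2_n}$ on the keychain piece glued to the $\phi_{K_i}$ along the reduction curves $\partial F_{K_i}$, rather than working with the non-canonical arcs $\alpha_k$.
\item[(iii)] Use that $\partial_iC$ has no closed components, and split into three cases.
\begin{itemize}
\item If $C'$ comes from compressing one $\phi_{K_i}$, the split-off closed part compresses away and minimality gives $J$ directly.
\item If $C'$ compresses a curve in $D^2_n$ other than $\partial F_K$, the split-off $\hat\phi_{K_{r+1}\#\cdots\#K_n}$ must compress to $\mathrm{id}_\emptyset$. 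This gives $U\le_hK_{r+1}\#\cdots\#K_n$, to which genus induction applies.
\item Otherwise $C'$ compresses $\partial F_K$, and $J=U$.
\end{itemize}
\item[(iv)] In the residual case $J=U$, use the six-form classification to exclude $n\ge3$. For $n=2$, do a JSJ case analysis:
\begin{itemize}
\item torus or cable roots, via the gluing condition $p_1q_1+p_2q_2=0$ and the orbifold homomorphism $h$;
\item pseudo-Anosov roots, via form \textbf{2.1.1}.
\end{itemize}
\end{enumerate}
In both cases of (iv) the output is $\hat\phi$, or a root $\hat\phi^{1/w}$, of a sum of companions. Compressions of $\phi^{1/k}$ correspond to those of $\phi$, so induction on $U\le_h K_1'\#K_2'$ (or on each $K_1^{(i)}\#K_2^{(i)}$) gives $K_2=-K_1$. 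Your outline never translates such non-knot outputs back into statements about knots, and that translation is what lets the induction run.
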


\begin{Cor}\label{cor:miyazaki}
\begin{enumerate}
\item If the slice-ribbon conjecture is true, then every concordance class of knots contains at most one fibered knot that is minimal with respect to $\le_h$.
\item If the slice-ribbon conjecture and the smooth $4$-dimensional Poincar\'e conjecture are both true, then no torsion element in the knot concordance group of order greater than $2$ can be represented by a fibered knot.
\end{enumerate}
\end{Cor}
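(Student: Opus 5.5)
We deduce both parts of Corollary~\ref{cor:miyazaki} from Theorem~\ref{thm:miyazaki}, using Theorem~\ref{thm:CG} and the fact that $\le_h$ is a partial order on fibered knots. The latter follows since $\le_h$ is reflexive and transitive (strongly homotopy-ribbon concordances compose), and antisymmetry comes from genus monotonicity together with the fact that a genus-preserving compression is trivial.

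\emph{Part (1).} Let $J$ and $J'$ be fibered knots in the same concordance class, both minimal with respect to $\le_h$. Then $J\#-J'$ is slice, hence ribbon by the slice-ribbon conjecture. In particular $J\#-J'\le_h U$, where $U$ is the unknot (the empty connected sum). Write $J=J_1\#\cdots\#J_m$ and $-J'=K'_1\#\cdots\#K'_{m'}$ as prime decompositions; connected summands of fibered knots are fibered. We apply Theorem~\ref{thm:miyazaki} with $n=0$ to the relation $J_1\#\cdots\#J_m\#K'_1\#\cdots\#K'_{m'}\le_h U$. Since there are no $K_i$, the collection of prime summands must pair off into (reversed) mirrored pairs.

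We next want to exploit minimality. Suppose some summand $J_a$ is paired with another summand $J_b=-J_a$ of $J$. Applying the ``if'' direction of Theorem~\ref{thm:miyazaki} (with $\ell_i=1$ for the other summands and $\ell=0$ where a pair is absorbed), we get $J$ with the pair $J_a\#-J_a$ deleted $\le_h J$. This is a strictly smaller knot, since its genus is smaller, contradicting the minimality of $J$. The same argument applies to $J'$. Hence every summand of $J$ is paired with a summand of $-J'$, which forces $J=J'$ as the prime decompositions match.

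\emph{Part (2).} Suppose a fibered knot $K$ represents a torsion class of order $r>2$. Among the fibered knots concordant to $K$, choose a $\le_h$-minimal one; this is possible because the set of predecessors is finite by Corollary~\ref{cor:find_predecessors}. The reverse mirror $-K$ represents the inverse class, and since the order is greater than $2$, this class differs from the class of $K$. Consider $K^{\#(r)}=K\#\cdots\#K$, which is slice, hence ribbon, hence $\le_h U$ (the Poincar\'e conjecture is used to pass from homotopy-ribbon in a homotopy $B^4$ to honest statements, or conversely). By Theorem~\ref{thm:miyazaki} with $n=0$, the multiset consisting of $r$ copies of the prime factors of $K$ is closed under $P\mapsto -P$ with matched multiplicities. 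The multiplicity of $P$ in this multiset is $r$ times its multiplicity in $K$. Therefore each prime $P$ of $K$ has $-P$ occurring in $K$ with the same multiplicity, so $-K=K$ as knots. This contradicts the fact that $K$ and $-K$ lie in different concordance classes.

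The main obstacle is the careful application of Theorem~\ref{thm:miyazaki} in part (1): the ``if'' direction must produce the reduced knot, and the induced pairing must respect the split between summands of $J$ and summands of $-J'$.
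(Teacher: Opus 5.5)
\textbf{There is a genuine gap in how you apply Theorem~\ref{thm:miyazaki}.}

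First, the direction is reversed. A ribbon knot $R$ satisfies $U\le R$ (puncture the ribbon disk), hence $U\le_h R$. It does not satisfy $R\le_h U$; that relation would force $R=U$ by genus monotonicity.

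Second, even on its own terms, your reading of the theorem with $n=0$ is wrong. With no $K_i$, the collection of $K_{i,j}$ is empty, so the theorem only yields $m=0$, i.e.\ nothing but $U$ lies below $U$. It says nothing about summands pairing off.

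The correct application takes $m=0$ and lets $K_1,\dots,K_n$ be the prime factors of $J\#(-J')$ (resp.\ of $rK$). Its conclusion is weaker than what you use. What pairs off are the factors of some predecessors $K_{i,1}\#\cdots\#K_{i,\ell_i}\le_h K_i$, not the prime factors $K_i$ themselves.

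\textbf{The missing idea: $\le_h$-minimality passes to prime factors.} If $J$ is minimal and $P$ is a prime factor of $J$, then any $P'\le_h P$ with $P'\ne P$ gives $P'\#(\text{rest})\le_h J$, by the ``if'' direction of the theorem or by summing concordances. This knot differs from $J$ by uniqueness of prime decomposition, contradicting minimality. Hence $K_{i,1}\#\cdots\#K_{i,\ell_i}=K_i$, and only then do the prime factors themselves pair off. This is exactly the content of the paper's parenthetical ``each is minimal with respect to $\le_h$''.

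With this repair, the rest of your argument goes through. In (1), excluding a mirrored pair inside a single knot via the ``if'' direction is the paper's step. In (2), your multiplicity count giving $-K=K$ is a slightly more direct route than the paper's (``no mirrored pair inside $K$, so each factor is amphichiral'').

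\textbf{Part (2) has a related defect.} Your minimal choice of $K$ is never used, which is the symptom of the gap above. You also place the smooth Poincar\'e conjecture at the wrong step: slice $\Rightarrow$ ribbon $\Rightarrow U\le_h rK$ needs no Poincar\'e. It is needed in the reduction to a minimal knot. Take $J$ minimal in the finite set of $\le_h$-predecessors of $K$. Then $J$ is fibered and globally $\le_h$-minimal. The Poincar\'e conjecture makes the homotopy $I\times S^3$ standard, so $J$ is genuinely concordant to $K$ and represents the same torsion class.

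Your phrase ``minimal among fibered knots concordant to $K$'' does not by itself give global minimality. Without Poincar\'e, a $\le_h$-predecessor of such a knot need not be concordant to $K$.
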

Theorem~\ref{thm:miyazaki} implies parts of Theorems~5.3 and 5.5 in \cite{miyazaki1994nonsimple}, and conversely, Miyazaki's proof of these results may also apply to show Theorem~\ref{thm:miyazaki}. Corollary~\ref{cor:miyazaki}(1) is an immediate consequence of Miyazaki (see \cite[Remark~6]{baker2016note}), and Corollary~\ref{cor:miyazaki}(2) follows from Theorem~5.8 in \cite{miyazaki1994nonsimple}. We remark that it is open whether every concordance class of knots has a unique minimum with respect to $\le$ (or $\le_h$), and whether the knot concordance group contains a torsion element of order greater than $2$.

We end the introduction with the following question.
\begin{Que}\label{que:fibered_conc}
If $K_1$ and $K_2$ are concordant fibered knots, must there be a fibered knot $K$ with $K_1\le K$, $K_2\le K$? (Or with $\le$ replaced by $\le_h$?)
\end{Que}

One may replace the existence of $K$ in Question~\ref{que:fibered_conc} by the equivalent condition that $K_1,K_2$ are related by a zigzag of (strongly homotopy-)ribbon concordances between fibered knots. Note that Question~\ref{que:fibered_conc} is implied by the slice-ribbon conjecture, since one could take $K=K_1\#K_2\#(-K_1)$.

Using the characteristic submanifold theory in the spirit of the arguments in \cite{bonahon1983cobordism}, one can show that if $K_1,K_2$ are hyperbolic knots with genus at most $3$, and are each minimal with respect to $\le_h$, then the existence of a fibered knot $K$ with $K_1\le_hK$, $K_2\le_hK$ implies that $K_1=K_2$.

\subsection{Organization of the paper}
In Section~\ref{sec:prelim}, we give some preliminaries and prove Theorem~\ref{thm:CG}. In Section~\ref{sec:example}, we give examples of ribbon concordances between fibered knots. We prove Theorem~\ref{thm:volume} in Section~\ref{sec:volume}, Theorem~\ref{thm:dilatation} and Theorem~\ref{thm:finite} in Section~\ref{sec:dilatation}, and Theorem~\ref{thm:compression} in Section~\ref{sec:compression}. In Section~\ref{sbsec:comp_forms}, we classify minimal compressions of surface homeomorphisms into six canonical forms. Based on this classification, we prove Theorem~\ref{thm:miyazaki} and Corollary~\ref{cor:miyazaki} in Section~\ref{sec:nonsimple}.

\subsubsection*{Acknowledgments}
We thank John Baldwin, Nathan Dunfield and Chi Cheuk Tsang for helpful discussions. QR acknowledges the use of ChatGPT and Gemini for providing some figures and partial arguments when preparing this manuscript. The authors were partially supported by the Simons Investigator Award 376200.

\section{Preliminaries}\label{sec:prelim}
In this section, we explain Theorem~\ref{thm:CG}, which reduces our proofs of Theorems~\ref{thm:volume} and Theorem~\ref{thm:dilatation} to the study of compressions of surface homeomorphisms. Throughout this paper, unless explicitly stated otherwise, all surfaces are compact, oriented, and all surface homeomorphisms are orientation-preserving, but not necessarily rel boundary.

A concordance from $J$ to $K$ in a homotopy $I\times S^3$ is \textit{strongly homotopy-ribbon} if its complement admits a relative handle decomposition with only $1$- and $2$-handles. If such a concordance exists, we write $J\le_hK$. The arguments in \cite{agol2022ribbon} show that $\le_h$ is a partial order.

A \textit{(relative) compression body} is a compact oriented $3$-manifold $C$ whose boundary is partitioned as $\partial C=(-\partial_iC)\cup A\cup\partial_eC$, so that
\begin{itemize}
\item $\partial_iC$ and $\partial_eC$, called the \textit{interior boundary} and the \textit{exterior boundary} of $C$, are disjoint subsurfaces of $\partial C$.
\item $\partial_iC$ has no sphere components.
\item $A\cong I\times\partial(\partial_iC)\cong I\times\partial(\partial_eC)$, called the \textit{vertical boundary} of $C$, is a union of annuli, each component of which cobounds a component of $\partial(\partial_iC)$ and a component of $\partial(\partial_eC)$.
\item $C$ is obtained from $I\times\partial_iC$ by attaching $0$-handles and $1$-handles; equivalently, $C$ is obtained from (a thickening of) $\partial_eC$ by attaching $2$-handles and $3$-handles.
\end{itemize}
For example, every handlebody is a compression body with empty interior boundary and connected exterior boundary. In general, one may regard a compression body $C$ as a cobordism from the interior boundary to the exterior boundary.

When $C$ is connected, each component of $\partial_iC$ is $\pi_1$-injective in $C$, and the unique component of $\partial_eC$ is $\pi_1$-surjective onto $C$. Conversely, if $C$ is a connected oriented irreducible $3$-manifold and $S\subset\partial C$ is a connected subsurface that $\pi_1$-surjects onto $C$, then by applying the loop theorem iteratively and the Poincar\'e conjecture, we know that $C$ is a relative compression body with exterior boundary $S$.

Let $\phi\colon S\to S$ be a surface homeomorphism. If $C$ is a compression body with $\partial_eC=S$ and $\Phi\colon C\to C$ is a homeomorphism extending $\phi$, we say $\phi$ \textit{compresses} in $C$ to the surface homeomorphism $\Phi|_{\partial_iC}\colon\partial_iC\to\partial_iC$, and we call $\Phi\colon C\to C$, or simply $C$, a \textit{compression} of $\phi$. Note that $\Phi$ is determined by $C$ up to isotopy rel $S$. We do not regard different extensions $\Phi$ of $\phi$ over $C$ to be different compressions; consequently, $\Phi|_{\partial_iC}$ is only well-defined up to isotopy. Similarly, if $\phi,\phi'\colon S\to S$ are isotopic, then their compressions are in one-to-one correspondence; hence, one may speak of compressions of a mapping class (not rel boundary) rather than a concrete surface homeomorphism, and we often confuse these two notions henceforth.

When $\phi$ is a rel boundary surface homeomorphism, e.g. the monodromy of a fibered knot, we require a compression $\Phi\colon C\to C$ of $\phi$ to be the identity on the vertical boundary. The distinction is minor, and starting from the next section, we ignore the difference and only talk about compressions of surface homeomorphisms not necessarily rel boundary.

\begin{proof}[Proof of Theorem~\ref{thm:CG}]
Necessity: Let $C$ be a strongly homotopy-ribbon concordance between $J$ and $K$ in a homotopy $I\times S^3$. After deleting the tubular neighborhood of an arc on $C$ connecting $J$ to $K$, the rest of $C$, denoted $D$, is a ribbon disk of $(-J)\#K$ in a homotopy $4$-ball $B$. The knot $(-J)\#K$ is a fibered knot with closed monodromy $(-\phi_J)\cup\phi_K\colon(-F_J)\cup F_K\to(-F_J)\cup F_K$, where $\phi_\bullet\colon F_\bullet\to F_\bullet$ is the monodromy of $\bullet=J,K$. Write for short $\phi=(-\phi_J)\cup\phi_K$ and $F=(-F_J)\cup F_K$. Casson--Gordon \cite[Theorem~5.1]{casson1983loop} shows that the monodromy $\phi$ compresses in a handlebody $H$ with $\ker(\pi_1F\to\pi_1H)=\ker(\pi_1F\to\pi_1W/(\pi_1W)_\omega)$ where $W$ is the infinite cyclic cover of $B\backslash D$ and $(\pi_1W)_\omega$ is a certain characteristic subgroup of $\pi_1W$ (in fact, it is the trivial group since $\pi_1W$ is free by \cite{kochloukova2006some}, although we won't need this). We claim that $\pi_1F_K\to\pi_1H$ is surjective. This would imply that $H$ is a compression body with interior boundary $F_J$ and exterior boundary $F_K$, proving the desired statement.

Since $C$ is a strongly homotopy-ribbon concordance, $B\backslash D$ can be built from $S^3\backslash K$ by attaching $2,3$-handles, showing that $\pi_1(S^3\backslash K)\to\pi_1(B\backslash D)$ is surjective. Taking the infinite cyclic cover, we find that $\pi_1(\R\times F_K)\to\pi_1W$ is surjective. Since $\pi_1H=\pi_1W/(\pi_1W)_\omega$, this proves the claim.

Sufficiency: Let $C$ be a compression body and $\Phi\colon C\to C$ a homeomorphism that restricts to monodromies of $J$ and $K$ on the interior and exterior boundaries, respectively. The vertical boundary of the mapping torus $M_\Phi$ of $\Phi$ is $I\times(S^1\times S^1)$. Attaching to the vertical boundary of $M_\Phi$ a copy of $I\times(S^1\times D^2)$ that kills the meridional slopes on both $K$ and $J$ (this is possible because $\Phi$ is the identity on the vertical boundary of $C$), we obtain a homotopy $I\times S^3$, inside which the core $I\times S^1\times\{*\}$ of $I\times S^1\times D^2$ is a concordance between $J$ and $K$, whose complement, namely $M_\Phi$, is built from $S^3\backslash J$ by attaching $1,2$-handles (since $C$ is built from $\partial_iC$ by attaching $1$-handles).
\end{proof}

We note that if $K$ is a fibered knot whose monodromy compresses to some homeomorphism $\phi\colon S\to S$ of a connected surface, then $\phi=\phi_J$ is the monodromy of a fibered knot $J$ (this is \cite[Lemma~1.3]{miyazaki1994nonsimple}(2) after geometrization) with $J\le_hK$. Hence, Corollary~\ref{cor:find_predecessors} is a direct consequence of Theorem~\ref{thm:CG} and Corollary~\ref{cor:compression}.

\section{Examples}\label{sec:example}
Given a fibered knot $J$, it is natural to wonder what fibered knots $K$ with $J\leq K$ might look like. If $R$ is a fibered ribbon knot, then one can see that $J \leq J\# R$. In this case, the theorems of this paper about monotonicity of simplicial volume and dilatation are easy to verify since $J$ is a retract of $J\#R$.

A variation on this is to take a fibered ribbon knot $R$, such as the square knot, such that a ribbon disk of it shares a $2n$-gon $Q$ with its fiber surface. Pick any $2n$-gon $P$ on the fiber surface of a fibered knot $J$. Then we may take the Murasugi sum of $J$ and $R$ along $P$ and $Q$ to obtain a knot $K$, which is again fibered \cite[Theorem~1.3]{gabai1983murasugi}. Pushing $J$ slightly into the ribbon surface, we get a ribbon annulus between $J$ and $K$, showing that $J\le K$. See Figure~\ref{fig:example}.

\begin{figure}[htbp]
\centering
\def\svgwidth{.9\columnwidth}
\begingroup%
  \makeatletter%
  \providecommand\color[2][]{%
    \errmessage{(Inkscape) Color is used for the text in Inkscape, but the package 'color.sty' is not loaded}%
    \renewcommand\color[2][]{}%
  }%
  \providecommand\transparent[1]{%
    \errmessage{(Inkscape) Transparency is used (non-zero) for the text in Inkscape, but the package 'transparent.sty' is not loaded}%
    \renewcommand\transparent[1]{}%
  }%
  \providecommand\rotatebox[2]{#2}%
  \newcommand*\fsize{\dimexpr\f@size pt\relax}%
  \newcommand*\lineheight[1]{\fontsize{\fsize}{#1\fsize}\selectfont}%
  \ifx\svgwidth\undefined%
    \setlength{\unitlength}{544.62837672bp}%
    \ifx\svgscale\undefined%
      \relax%
    \else%
      \setlength{\unitlength}{\unitlength * \real{\svgscale}}%
    \fi%
  \else%
    \setlength{\unitlength}{\svgwidth}%
  \fi%
  \global\let\svgwidth\undefined%
  \global\let\svgscale\undefined%
  \makeatother%
  \begin{picture}(1,0.30287918)%
    \lineheight{1}%
    \setlength\tabcolsep{0pt}%
    \put(0,0){\includegraphics[width=\unitlength,page=1]{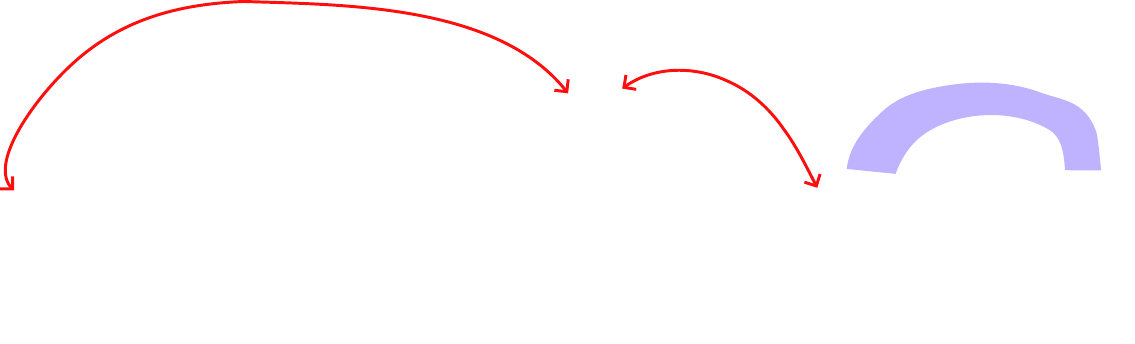}}%
    \put(0.48700724,0.26358119){\color[rgb]{1,0,0}\transparent{0.94599998}\makebox(0,0)[lt]{\lineheight{1.25}\smash{\begin{tabular}[t]{l}plumb\end{tabular}}}}%
    \put(0,0){\includegraphics[width=\unitlength,page=2]{plumb.pdf}}%
  \end{picture}%
\endgroup%

\caption{Plumb the square knot onto the figure $8$ knot, yielding a ribbon concordance from the figure $8$ knot (to the knot $9_{24}$ or its mirror).}
\label{fig:example}
\end{figure}

\section{Monotonicity of simplicial volume}\label{sec:volume}

To prove monotonicity of simplicial volume, we use the dual relation with bounded cohomology. First we review the relation between the Gromov norm on homology and bounded cohomology \cite{Gromov1982volume}. We will follow the notation of \cite[Appendix F]{Benedetti1992Petronio}. 

Let $X$ be a topological space. For $c\in C^k(X)$, define $||c||_\infty= \sup \{ |c(\sigma)| \mid \sigma\colon \Delta_k \to X\}  \in [0,\infty]$.
Define $\hat{C}^k(X) = \{c\in C^k(X) \mid ||c||_\infty < \infty \}$. Then $\delta^k(\hat{C}^k(X))\subset \hat{C}^{k+1}(X)$, and the \textit{bounded cohomology} of $X$, denoted $\hat{H}^*(X)$, is the cohomology
of the cochain complex $(\hat{C}^*(X), \delta^*)$. The $\ell^\infty$-norm on $\hat C^*(X)$ induces a pseudo-norm on $\hat H^*(X)$, namely $||\beta||_\infty = \inf \{ ||c||_\infty \mid c\in \hat{C}^k(X), \langle c \rangle = \beta\}$, $\beta\in \hat{H}^k(X)$.
The cohomology groups $\hat{H}^\ast(X)$ satisfy naturality, homotopy, and dimension axioms, but not the other axioms of cohomology theory. 
Moreover, they only depend on $\pi_1(X)$. The relation with Gromov norm is as follows. 

\begin{Prop}\label{prop:l1_linfty}\cite[Prop. F.2.2]{Benedetti1992Petronio}
The Gromov norm $||z||_1$ of a class $z\in H_k(X)$ satisfies $||z||_1^{-1} = \inf \{||\beta||_\infty \mid \beta\in \hat{H}^k(X), [\beta,z]=1\}$.\qed
\end{Prop}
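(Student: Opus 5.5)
The plan is the usual $\ell^1$–$\ell^\infty$ duality: a direct estimate for one inequality and Hahn--Banach for the other. Throughout I work with real coefficients and the $\ell^1$-norm $\|\sum a_i\sigma_i\|_1=\sum|a_i|$ on $C_k(X)$. The Gromov norm $\|z\|_1$ is then the infimum of $\|z_0\|_1$ over cycles $z_0$ representing $z$, i.e.\ the $\ell^1$-distance from a fixed representative $z_0$ to the boundary subspace $B_k=\partial C_{k+1}(X)$. The key elementary observation is that for any cochain $c$, viewed as a linear functional on $C_k(X)$, the operator norm with respect to $\|\cdot\|_1$ is exactly $\|c\|_\infty$. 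This holds because the simplices form a basis of $\ell^1$-norm one, so $|c(\sum a_i\sigma_i)|\le \|c\|_\infty\sum|a_i|$, with near-equality on single simplices.

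\textbf{Step 1: the inequality $\inf\|\beta\|_\infty\ge\|z\|_1^{-1}$.} Let $\beta$ satisfy $[\beta,z]=1$. Take any bounded cocycle $c$ representing $\beta$ and any cycle $z_0$ representing $z$. Then
\[1=|c(z_0)|\le\|c\|_\infty\|z_0\|_1.\]
Taking the infimum over $c$ and over $z_0$ gives $\|\beta\|_\infty\,\|z\|_1\ge1$. This step also covers the degenerate case $\|z\|_1=0$: there no such $\beta$ exists, the infimum over the empty set is $+\infty$, and the identity holds with the convention $0^{-1}=\infty$.

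\textbf{Step 2: the reverse inequality, assuming $\|z\|_1>0$.} Fix a representative $z_0$ and let $V=\R z_0+B_k\subset C_k(X)$. Define $f\colon V\to\R$ by $f(tz_0+b)=t$. This is well defined: if $z_0\in B_k$ then $\|z\|_1=0$, which we have excluded. For $t\ne0$,
\[\|tz_0+b\|_1=|t|\,\|z_0+b/t\|_1\ge|t|\,\|z\|_1,\]
so $f$ has norm at most $\|z\|_1^{-1}$ on $V$. By Hahn--Banach, applied to the seminormed space $(C_k(X),\|\cdot\|_1)$ (only sublinearity of the norm is needed), $f$ extends to a linear functional $c$ on all of $C_k(X)$ with the same bound. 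By the observation above, $c$ is a cochain with $\|c\|_\infty\le\|z\|_1^{-1}$. Since $c$ vanishes on $B_k$, we have $\delta c=c\circ\partial=0$, so $c$ is a bounded cocycle. It satisfies $c(z_0)=1$, so its class $\beta$ has $[\beta,z]=1$ and $\|\beta\|_\infty\le\|z\|_1^{-1}$. Combined with Step 1, this gives the equality.

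\textbf{Main obstacle.} The only genuinely nonformal point is Step 2, specifically justifying Hahn--Banach in this setting. The space $C_k(X)$ is an algebraic direct sum rather than a Banach space, and the norm on $V$ must be the honest $\ell^1$-norm and not the quotient seminorm. I would handle this by applying the analytic (sublinear-functional) form of Hahn--Banach with $p(x)=\|z\|_1^{-1}\|x\|_1$, which requires no completeness. The remaining care is bookkeeping: identifying the sup over simplices with the operator norm, and treating the edge case $\|z\|_1=0$ separately as in Step 1.
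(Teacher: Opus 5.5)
Your proof is correct. The easy inequality comes from $|c(z_0)|\le\|c\|_\infty\|z_0\|_1$, which also covers the degenerate case $\|z\|_1=0$, and the reverse inequality comes from extending $f(tz_0+b)=t$ from $\mathbb{R}z_0+B_k$ by the sublinear-functional form of Hahn--Banach; this is the standard $\ell^1$--$\ell^\infty$ duality argument, and the paper gives no proof of its own, simply citing Benedetti--Petronio (Prop.~F.2.2), whose proof is, as far as I recall, this same Hahn--Banach argument.
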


Now we state the theorem which will imply the monotonicity of simplicial volume for ribbon concordances of fibered knots. 

\begin{Thm}\label{thm:Gromov_norm_compressionbody}
    Let $C$ be a connected compression body, and let $\Phi\colon C\to C$ be a homeomorphism that restricts to $\Phi_i = \Phi|_{\partial_i C}, \Phi_e= \Phi|_{\partial_e C}$.
    Then $||T_{\Phi_i}|| \leq ||T_{\Phi_e}||$.
\end{Thm}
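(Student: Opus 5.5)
We will use the duality of Proposition~\ref{prop:l1_linfty} together with the fact that bounded cohomology depends only on $\pi_1$. Let $M_\Phi$ be the mapping torus of $\Phi$; it is a $4$-manifold whose boundary contains $T_{\Phi_i}$ and $T_{\Phi_e}$, glued along the mapping torus of $\Phi$ on the vertical boundary. Since $C$ is obtained from $\partial_eC$ by attaching $2$- and $3$-handles, $M_\Phi$ is obtained from $T_{\Phi_e}$ (thickened) by attaching $2$- and $3$-handles. Hence the inclusion $T_{\Phi_e}\hookrightarrow M_\Phi$ is $\pi_1$-surjective. The key algebraic input is that $\pi_1\partial_iC\to\pi_1C$ is injective, so $\pi_1 T_{\Phi_i}\to \pi_1 M_\Phi$ is injective; indeed it is an extension of $\Z$ by $\pi_1\partial_iC$ mapping into the extension of $\Z$ by $\pi_1C$.

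\emph{Step 1: reduction.} We may assume $\partial_iC$ is nonempty, and treat the components $S_1,\dots,S_r$ of $\partial_iC$. These are permuted by $\Phi_i$, so $T_{\Phi_i}$ is a disjoint union of mapping tori $T_j$. We must show $\sum_j\|T_j\|\le\|T_{\Phi_e}\|$. To handle the boundary, we work with relative classes: $[T_{\Phi_e},\partial]$ and $[T_{\Phi_i},\partial]$ are homologous in $M_\Phi$ modulo the vertical part, which consists of tori and annuli and carries amenable fundamental group. Alternatively, since all the knot and link applications have torus boundary, we may use the relative Gromov norm, for which amenable boundary does not matter.

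\emph{Step 2: dual argument.} Write $z_e$ and $z_i$ for the two fundamental classes. In $H_3(M_\Phi,\text{vertical})$ we have $\iota_*z_e=\iota_*z_i$, since together they bound $[M_\Phi]$. Suppose we are given a bounded class $\beta\in\hat H^3(T_{\Phi_i})$ with $[\beta,z_i]=1$ and $\|\beta\|_\infty$ near optimal. We want to extend $\beta$ to $\tilde\beta\in\hat H^3(M_\Phi)$ with $\|\tilde\beta\|_\infty\le\|\beta\|_\infty$. Then restricting to $T_{\Phi_e}$ gives a class pairing to $1$ with $z_e$ and having no larger norm. By Proposition~\ref{prop:l1_linfty}, this yields $\|z_e\|_1^{-1}\le\|z_i\|_1^{-1}$, i.e.\ $\|z_i\|\le\|z_e\|$.

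\emph{Step 3: the main obstacle.} The main obstacle is the norm-nonincreasing extension of bounded classes from the subgroup $\pi_1T_{\Phi_i}$ to $\pi_1M_\Phi$, and it is here that we would use the structure of $C$. Since $C$ is built from $I\times\partial_iC$ by $1$-handles, $\pi_1C$ is a free product of the groups $\pi_1S_j$ and a free group. So $\pi_1M_\Phi$ is an HNN-type extension, or semidirect product with $\Z$, of a free product. We would try to realize $M_\Phi$, up to homotopy and amenable pieces, as a graph of spaces whose vertex spaces are the $T_j$ (or finite covers of them, when $\Phi$ permutes components) together with a free-group piece, and whose edges are amenable: the circle fiber direction, and the mapping torus of a free group acting on a tree. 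There are then two routes. One is to apply Gromov's mapping theorem or the amenable gluing and extension results for bounded cohomology (e.g.\ Bucher--Burger--Frigerio--Iozzi--Pagliantini--Pozzetti on $\ell^1$-norms under amenable gluing). The other, more concrete, is to retract: construct a map $M_\Phi\to X$ onto a space obtained from $\bigsqcup T_j$ by attaching amenable pieces, with the map restricting to the identity on $T_{\Phi_i}$. Then pulling back gives the extension with norm bound, because $\hat H^*$ of a space glued along amenable subspaces is isometrically controlled by its pieces. If extension fails directly, we would instead pass to the finite cover on which $\Phi$ fixes each $S_j$ and each $1$-handle, use multiplicativity of simplicial volume under finite covers, and argue componentwise.
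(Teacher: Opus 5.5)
\textbf{There is a genuine gap: the norm-nonincreasing extension is never constructed, and the routes you sketch for it do not work as stated.}

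Your outer frame is the paper's. By Proposition~\ref{prop:l1_linfty}, it suffices to extend any $\beta_i\in\hat H^3(T_{\Phi_i})$ with $[\beta_i,z_i]=1$ to a class on the $4$-dimensional mapping torus $T_\Phi$ (your $M_\Phi$) without increasing $\|\cdot\|_\infty$, because $z_i$ and $z_e$ have the same image there. That extension is the entire content of the theorem, and you leave it as an ``obstacle''.

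Injectivity of $\pi_1\partial_iC\to\pi_1C$ gives no norm control by itself. You need a retraction, and the retraction $r\colon C\to\partial_iC$ supplied by the $1$-handle structure is not $\Phi$-equivariant, so it does not descend to the mapping tori. In general there is no retraction $\pi_1T_\Phi\to\pi_1T_{\Phi_i}$ at all. For example, let $\Phi$ be the identity on $\partial_iC=\Sigma$ but slide the two feet of a $1$-handle $t$ so that $\Phi_*(t)=gth$. A retraction would have to send $t\mapsto xs^m$ with $x=gxh$, which is impossible when $h$ is not conjugate to $g^{-1}$ in $\pi_1\Sigma$.

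Your other routes have similar problems:
\begin{itemize}
\item The graph-of-spaces route is unsupported. $\Phi$ need not preserve any splitting of $\pi_1C=\pi_1\partial_iC*F_k$. Moreover, the ``free-group piece'' you list among the amenable edges is a mapping torus of $F_k$, which is not amenable for $k\ge2$, so the amenable gluing theorems do not apply.
\item The finite-cover fallback fails, because no power of $\Phi$ need permute the $1$-handles.
\end{itemize}

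\textbf{The missing idea} is to use $r$ upstairs, on the infinite cyclic cover $\rho\colon\R\times C\to T_\Phi$, where no equivariance is needed, and then push down using amenability of the deck group $\Z$. Gromov's averaging operator $\hat A\colon\hat H^*(\R\times C)\to\hat H^*(T_\Phi)$, built from a fixed invariant mean (non-principal ultrafilter), has three properties:
\begin{itemize}
\item it is norm non-increasing;
\item it satisfies $\hat A\circ\hat\rho^*=\mathrm{id}$;
\item it is natural with respect to the inclusion $\iota\colon T_{\Phi_i}\to T_\Phi$ and its lift $\R\times\partial_iC\to\R\times C$.
\end{itemize}
Set $\beta=\hat A\,(\mathrm{id}_\R\times r)^*\hat\rho_i^*\beta_i$. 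Then $\|\beta\|_\infty\le\|\beta_i\|_\infty$, and $\hat\iota^*\beta=\hat A\,\hat\iota^*(\mathrm{id}_\R\times r)^*\hat\rho_i^*\beta_i=\hat A\,\hat\rho_i^*\beta_i=\beta_i$. This is exactly the extension you need. It uses a retraction $C\to\partial_iC$, so it requires $\partial_iC$ to be connected, which is the case arising from Theorem~\ref{thm:CG}.

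\textbf{A secondary issue is the boundary.} Pairing absolute bounded classes with relative classes in $H_3(M_\Phi,\text{vertical})$ is not meaningful as you wrote it. The paper avoids this by doubling $C$ along its vertical boundary, so that $T_{\Phi_e}$ and $T_{\Phi_i}$ become closed and cobound $T_\Phi$, and using that simplicial volume doubles.
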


\begin{proof}
The simplicial volume of a double is twice the simplicial volume. Thus, to avoid dealing with manifolds with boundary, we may double the compression body along the vertical boundary, and assume that $\partial_eC$ and $\partial_iC$ are closed surfaces.

Let $[T_{\Phi_e}], [T_{\Phi_i}]$ be the fundamental classes of the respective manifolds. Then $[T_{\Phi_e}], [T_{\Phi_i}]$ both map to a common generator of $H_3(T_\Phi)\cong \mathbb{Z}$, denoted by $[T_\Phi]$. We have $||T_{\Phi_e}||=||[T_{\Phi_e}]||_1\geq ||[T_\Phi]||_1$ since maps are non-increasing on Gromov norm. We also have $||T_{\Phi_i}||\geq ||[T_\Phi]||_1$, and if we can prove the other inequality, then the theorem will follow. 

By a theorem of Gromov \cite[p. 247]{Gromov1982volume}, the cyclic covering $\rho\colon \mathbb{R}\times C\to T_\Phi$ induces an isometric injection $\hat{\rho}^\ast\colon \hat{H}^3(T_\Phi)\to \hat{H}^3(\mathbb{R}\times C) \cong \hat{H}^3(C)$ with respect to $||\cdot||_\infty$, and similarly for $T_{\Phi_e},T_{\Phi_i}$, since $\mathbb{Z}$ is amenable.
 
We need to know naturality in Gromov's theorem. The proof of Gromov's theorem uses an averaging operator $\hat{A}\colon \hat{H}^\ast(C)\to \hat{H}^\ast(T_\Phi)$ such that $\hat{A}\circ \hat{\rho}^\ast= \mathrm{id}$, and similarly replacing $\Phi$ with $\Phi_e,\Phi_i$. Let $\iota\colon T_{\Phi_i}\to T_\Phi$ denote the inclusion map, inducing the inclusion of cyclic covers $\iota\colon \mathbb{R}\times \partial_i C \to \mathbb{R}\times C$. Then one can easily see from Gromov's proof that $\hat{A}\circ\hat{\iota}^\ast = \hat{\iota}^\ast\circ\hat{A}$. The point is that $\hat{A}$ is defined via an averaging operator defined by a non-principal ultrafilter, which allows one to take limits of bounded sequences. Once the ultrafilter is chosen, then the limit will commute with maps, and we get naturality of the averaging operator. 

Since $C$ is obtained from $I\times\partial_iC$ by attaching $1$-handles, there exists a retract $r\colon C\to\partial_iC$ such that $r\circ\iota=\mathrm{id}_{\partial_iC}$. Let $\beta_i\in\hat H^3(T_{\Phi_i})$ be any element with $[\beta_i,[T_{\Phi_i}]]=1$. Let $\beta=\hat A\circ\hat r^\ast\circ\hat{\rho_i}^\ast(\beta_i)$. Since $||\cdot||_\infty$ is non-increasing under pullbacks and under $\hat A$, we have $||\beta||_\infty\le||\beta_i||_\infty$. Since $\hat{\iota}^\ast\circ \hat{A}\circ \hat{r}^\ast\circ\hat{\rho_i}^\ast =\hat{A}\circ \hat{\iota}^\ast\circ \hat{r}^\ast\circ \hat{\rho_i}^\ast =\hat{A}\circ\hat{\rho_i}^\ast = \mathrm{id}$, we have $\hat\iota^\ast(\beta)=\beta_i$, and hence $[\beta,[T_\Phi]]=1$. It follows that $\inf\{||\beta||_\infty\mid\beta\in\hat H^3(T_\Phi),[\beta,[T_\Phi]]=1\}\le\inf\{||\beta_i||_\infty\mid\beta_i\in\hat H^3(T_{\Phi_i}),[\beta_i,[T_{\Phi_i}]]=1\}$, implying that $||T_{\Phi_i}||\le||[T_\Phi]||_1$ by Proposition~\ref{prop:l1_linfty}.
\end{proof}

Theorem~\ref{thm:volume} is now a consequence of Theorem~\ref{thm:CG} and Theorem~\ref{thm:Gromov_norm_compressionbody}.

\section{Monotonicity of dilatation constant}\label{sec:dilatation}
To prove monotonicity of dilatation, we exploit the relation between dilatation and the growth rate of group endomorphisms. The following definition and facts can be found in \cite[Expos\'e~10]{fathi1979travaux}.  

Let $G$ be a finitely generated group and $A\colon G\to G$ be an endomorphism of $G$. The \textit{growth rate} of $A$ is $$\gamma_A:=\sup_{g\in G}\limsup_{n\to\infty}\frac1n\log(\ell(A^n(g))),$$ where $\ell\colon G\to\Z_{\ge0}$ is the word length function on $G$ with respect to a finite generating set. The growth rate of $A$ is invariant under conjugations of either the domain or the target. In particular, if $f\colon X\to X$ is a continuous map between path-connected topological spaces, the induced map $f_*\colon\pi_1X\to\pi_1X$, well-defined up to conjugation, has a well-defined growth rate $\gamma_{f_*}$, independent of the choice of base points.

We specialize to the study of a surface homeomorphism $\phi\colon S\to S$. By Nielsen--Thurston theory, after an isotopy of $\phi$ (not necessarily rel boundary), we may pick a canonical reduction system $\delta\subset S$ of $\phi$ which is a multicurve preserved by $\phi$, and an open tubular neighborhood $\nu(\delta)$ preserved by $\phi$. Furthermore, we can write $S\backslash\nu(\delta)=\sqcup_{i=1}^kS_i$ so that $\phi$ restricts to each $S_i$, and each restriction $\phi|_{S_i}$ is either periodic or pseudo-Anosov, permuting the components of $S_i$ cyclically. Let $\lambda_i$ denote the dilatation constant of $\phi|_{S_i}$ if this is pseudo-Anosov, and let $\lambda_i=1$ otherwise. We are interested in the maximal dilatation $\lambda([\phi])=\lambda(\phi)=\max_i(\lambda_i)$. When $\phi$ is the monodromy of a fibered knot $K$, this is the dilatation $\lambda(K)$ of $K$ we defined in the introduction.

\begin{Lem}\label{lem:growth_rate}
If $S$ is connected, then $\log\lambda([\phi])=\gamma_{\phi_*}$.
\end{Lem}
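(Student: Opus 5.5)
We prove the two inequalities $\gamma_{\phi_*}\ge\log\lambda(\phi)$ and $\gamma_{\phi_*}\le\log\lambda(\phi)$ separately. The common tool is that word length in $\pi_1S$ is comparable to geometric length. Fix a hyperbolic metric on $S$, with geodesic boundary if $\partial S\neq\emptyset$. By the Švarc--Milnor lemma, $\ell(g)$ is bi-Lipschitz comparable, up to an additive constant, to the length of the shortest based loop representing $g$. That length in turn bounds from above the length of the geodesic representative of the conjugacy class of $g$. Replacing $\phi$ by an isotopic homeomorphism changes $\phi_*$ only by an inner automorphism, and so does changing base points. Since the growth rate is conjugation-invariant, we may take $\phi$ in the Nielsen--Thurston normal form described above.

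\emph{Lower bound.} Suppose some piece $S_i$ is pseudo-Anosov with dilatation $\lambda_i=\lambda(\phi)>1$; otherwise there is nothing to prove. Choose an essential closed curve $c\subset S_i$ that is not peripheral in $S_i$. Some power $\phi^p$ preserves a component $S_i^0$ of $S_i$ and is pseudo-Anosov there with dilatation $\lambda_i^p$. By Thurston's theory (\cite[Exposé~10]{fathi1979travaux}), the geodesic length of $\phi^{pn}(c)$ in $S_i^0$ grows like $\lambda_i^{pn}$ up to multiplicative constants. We need this growth to persist in $S$. Because $S_i^0$ is an essential subsurface, its boundary curves are essential and non-parallel. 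Hence the geodesic length of a curve in $S_i^0$ measured in $S$ is bounded below by a constant multiple of its length in $S_i^0$; this can be seen via intersection numbers with a fixed filling system of arcs and curves in $S_i^0$, which are preserved under the inclusion. Take $g\in\pi_1S$ (after conjugating into a fixed base point) representing $c$. Then $\ell(\phi_*^{pn}(g))\gtrsim\lambda_i^{pn}$. Since $\ell(\phi_*(h))\le L\,\ell(h)$ for a fixed $L$, the intermediate iterates obey the same exponential rate, so $\limsup\frac1n\log\ell(\phi_*^n g)\ge\log\lambda_i$.

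\emph{Upper bound.} For this direction we show that for any fixed loop $\alpha$, the length of $\phi^n(\alpha)$, suitably straightened rel base point, grows at most like $C n^k\lambda^n$. Decompose $\alpha$ along $\delta$ into arcs in the pieces $S_i$ and arcs crossing the annuli $\nu(\delta)$. On a periodic piece, lengths stay bounded after isotopy. On a pseudo-Anosov piece, arcs grow at rate at most $\lambda_i^n$; this follows by comparing with the transverse measures of the stable and unstable foliations, since arc length is controlled by $\mu^s+\mu^u$ up to constants. On the annuli, $\phi$ acts as a power of a Dehn twist possibly composed with a permutation, which contributes at most linear growth per crossing. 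The number of crossings of $\delta$ is preserved under iteration. Summing, we get $\ell(\phi_*^n g)\le C(1+n)\lambda(\phi)^n$, and therefore $\gamma_{\phi_*}\le\log\lambda(\phi)$. We must also handle the base point: the based loop $\phi^n(\alpha)$ is joined to the base point by a path whose length grows at most linearly, since we may place the base point in a periodic region or track it along $\phi^n$ of a short path.

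The main obstacle is the upper bound for reducible $\phi$. Here one must control the interaction between twisting in $\nu(\delta)$ and stretching in the pseudo-Anosov pieces, to make sure these combine only polynomially times $\lambda^n$ and not multiplicatively in an exponential way. My first approach would be the one in Fathi--Laudenbach--Poénaru: count lengths via intersection numbers with a finite filling family of curves and arcs adapted to $\delta$, and show these intersection numbers satisfy a linear recursion. That recursion is block upper-triangular, with spectral radius $\lambda(\phi)$ on the pseudo-Anosov blocks and unipotent Dehn-twist blocks, which yields the polynomial-times-$\lambda^n$ bound. Alternatively, one may simply cite \cite[Exposé~10]{fathi1979travaux}, where this equality of growth rate and maximal dilatation is proved.
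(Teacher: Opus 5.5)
Your proposal follows essentially the same route as the paper: the lower bound comes from a non-peripheral curve in a maximal-dilatation pseudo-Anosov piece whose length grows like $\lambda^n$, transferred to word length via the quasi-isometry $\pi_1S\to\tilde S$, and the upper bound comes from splitting a loop along $\delta$ into periodic (bounded), annular (linear) and pseudo-Anosov (at most $\lambda^n$) segments. One caution: your fallback of simply citing \cite[Expos\'e~10]{fathi1979travaux} does not cover the reducible case, since that exposé treats pseudo-Anosov maps and only its argument for $\log\lambda\le\gamma_{\phi_*}$ extends in general. So the general upper bound has to rest on your segment-by-segment estimate, which already removes the ``multiplicative interaction'' worry because the number of segments is fixed under iteration.
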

\cite[Expos\'e~10]{fathi1979travaux} proved Lemma~\ref{lem:growth_rate} when $\phi$ is pseudo-Anosov, although the proof for the $\le$ direction works in general. We sketch a proof of the equality in its full generality, borrowing ideas from \cite{fathi1979travaux}.
\begin{proof}
We pick a metric on $S$ as follows: 
\begin{itemize}
\item On each $S_i$ where $\phi$ is periodic, we pick any $\phi$-invariant Riemannian metric;
\item On each $S_i$ where $\phi$ is pseudo-Anosov, we pick the singular Euclidean metric induced by a pair of stable and unstable measured foliations associated with $\phi|_{S_i}$;
\item On each component $A\cong I\times S^1$ of $\overline{\nu(\delta)}$, we fix any metric extending those on $\partial A$, so that the distance between any two points on the same boundary component of $A$ is achieved by an arc on the boundary, which is smaller than the distance between the two boundary components of $A$.
\end{itemize}
As in \cite[p. 190]{fathi1979travaux}, define $G_{[\phi]}:=\sup_x\limsup_{n\to\infty}\frac1n\log(\ell(\phi^n(x)))$, where $x$ runs over free homotopy classes of loops in $S$ and $\ell(x)$ denotes the minimal length of a loop representing $x$, with respect to the metric defined above. We prove that
\begin{equation}\label{eq:growth_rate}
\log\lambda([\phi])=G_{[\phi]}=\gamma_{\phi_*}.
\end{equation}
Since free homotopy classes of loops are exactly elements in $\pi_1S$ up to conjugacy, by exploiting the quasi-isometric embedding $\pi_1S\subset\tilde S$, we see that $G_{[\phi]}\le\gamma_{\phi_*}$.

Assume $\lambda([\phi])=\lambda_1$. If $\phi|_{S_1}$ is pseudo-Anosov, pick an essential loop $\alpha\subset S_1$ that is quasi-transverse to the stable foliation; then $\ell(\phi^n([\alpha]))\ge\lambda_1^nc$ for some $c>0$, implying that $\log\lambda([\phi])\le G_{[\phi]}$. If $\phi|_{S_1}$ is periodic, then $0=\log\lambda([\phi])\le G_{[\phi]}$ trivially holds.

Pick a base point $*\in S$ and a path $\gamma$ from $*$ to $\phi(*)$ to define $\phi_*\colon\pi_1(S,*)\to\pi_1(S,*)$. Let $g\in\pi_1(S,*)$ be represented by a based loop $\alpha_0$. Then $\phi_*^n(g)$ is represented by the based loop $\alpha_n$, inductively defined by $\alpha_n=\gamma*\phi(\alpha_{n-1})*\gamma^{-1}$. We estimate the length of $\alpha_n$, denoted $L(\alpha_n)$, by breaking into parts that lie in periodic pieces $S_i$, reducible pieces $A_j\subset\overline{\nu(\delta)}$, or pseudo-Anosov pieces $S_k$: $$L(\alpha_n)=L_P(\alpha_n)+L_R(\alpha_n)+L_A(\alpha_n),$$ where $L_P(\alpha_n)\le C$, $L_R(\alpha_n)\le Cn$, and $L_A(\alpha_n)\le\lambda([\phi])L_A(\alpha_{n-1})+C$, for some $C=C(\alpha_0)>0$. It follows that $\ell(\phi_*^n(g))\le L(\alpha_n)\le n\lambda([\phi])^nC$ for some $C>0$, which implies $\gamma_{\phi_*}\le\log\lambda([\phi])$.
\end{proof}

\begin{Thm}\label{thm:dilatation_compressionbody}
Let $C$ be a compression body with $\partial_iC,\partial_eC$ connected and nonempty, and let $\Phi\colon C\to C$ be a homeomorphism that restricts to $\Phi_i=\Phi|_{\partial_iC},\Phi_e=\Phi|_{\partial_eC}$. Then $\gamma_{\Phi_{i,*}}\le\gamma_{\Phi_*}\le\gamma_{\Phi_{e,*}}$.
\end{Thm}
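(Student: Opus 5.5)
The plan is to compare growth rates through the two group homomorphisms that the compression body supplies: the surjection $\pi_1\partial_eC\to\pi_1C$, and the retraction $r\colon C\to\partial_iC$ (with $r\circ\iota=\mathrm{id}$) already used in the proof of Theorem~\ref{thm:Gromov_norm_compressionbody}. First fix base points and paths so that $\Phi_*$, $\Phi_{e,*}$, and $\Phi_{i,*}$ are honest endomorphisms. Choose a base point $*\in\partial_iC$ and a path $\gamma$ from $*$ to $\Phi(*)$ inside $\partial_iC$, and define $\Phi_{i,*}$ and $\Phi_*$ with this same $\gamma$. Then $\iota_*\circ\Phi_{i,*}=\Phi_*\circ\iota_*$ holds on the nose. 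Similarly, choose a base point in $\partial_eC$ with a path in $\partial_eC$, so that $j_*\circ\Phi_{e,*}=\Phi_*\circ j_*$, where $j\colon\partial_eC\to C$ is the inclusion. Changing base points changes $\Phi_*$ only by composing with an inner automorphism, and growth rates are invariant under conjugation, so $\gamma_{\Phi_*}$ is the same number in both normalizations.

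For the upper bound $\gamma_{\Phi_*}\le\gamma_{\Phi_{e,*}}$, we use that $j_*$ is surjective and, as a homomorphism between finitely generated groups, is Lipschitz for the word metrics. Take the generating set of $\pi_1C$ to be the image of a generating set of $\pi_1\partial_eC$; then $j_*$ does not increase length. Given $g\in\pi_1C$, pick a lift $h$ with $j_*(h)=g$. Then
$$\ell(\Phi_*^n g)=\ell\bigl(j_*\Phi_{e,*}^n h\bigr)\le\ell\bigl(\Phi_{e,*}^n h\bigr).$$
Taking $\limsup\frac1n\log$ and then the supremum over $g$ gives the inequality.

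For the lower bound $\gamma_{\Phi_{i,*}}\le\gamma_{\Phi_*}$, we use that $\iota_*$ is injective with a left inverse $r_*$. The map $r_*$ is Lipschitz with some constant $L$, so for $g\in\pi_1\partial_iC$,
$$\ell\bigl(\Phi_{i,*}^n g\bigr)=\ell\bigl(r_*\iota_*\Phi_{i,*}^n g\bigr)=\ell\bigl(r_*\Phi_*^n\iota_* g\bigr)\le L\,\ell\bigl(\Phi_*^n\iota_*g\bigr).$$
The constant $L$ disappears after taking $\frac1n\log$, which gives the inequality. We do not need $r$ to be $\Phi$-equivariant, because the intertwining is applied only to $\iota$.

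The main point requiring care is the first step: we must make sure that the intertwining relations hold exactly and not merely up to conjugation. A conjugation by an element depending on $n$ could in principle add length growing linearly in $n$, and that would break the estimates. The fix is to use the same path $\gamma$, lying inside the relevant boundary surface, to define both the boundary map and $\Phi_*$. Since the two boundary inequalities use different base points, we then invoke the conjugation invariance of $\gamma$ stated before Lemma~\ref{lem:growth_rate} to compare them. The hypotheses that $\partial_iC$ and $\partial_eC$ are connected and nonempty are used only to make $\Phi_{i,*}$ and $\Phi_{e,*}$ well defined as endomorphisms of a single group.
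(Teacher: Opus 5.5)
Your proposal is correct and follows the paper's proof. The upper bound comes from lengths not increasing under the surjection $\pi_1\partial_eC\to\pi_1C$, and the lower bound comes from the retraction $r$, which the paper phrases as $\pi_1(\partial_iC)\hookrightarrow\pi_1C$ being a quasi-isometric embedding. Your base-point bookkeeping is fine but not strictly needed: an $n$-dependent conjugator has length at most about $\sum_{k<n}\ell(\Phi_*^k h)$, so it would not change the exponential growth rate anyway.
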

\begin{proof}
We have maps $\pi_1(\partial_iC)\hookrightarrow\pi_1C\twoheadleftarrow\pi_1(\partial_eC)$ that intertwine the automorphisms $\Phi_{i,*}$, $\Phi_*$, and $\Phi_{e,*}$. Since the word length does not increase under surjections, we see $\gamma_{\Phi_*}\le\gamma_{\Phi_{e,*}}$. Since $\pi_1(\partial_iC)\hookrightarrow\pi_1C=\pi_1(\partial_iC)*F_k$ admits a retract, it is a quasi-isometric embedding. Hence the word length is coarsely preserved, and we see $\gamma_{\Phi_{i,*}}\le\gamma_{\Phi_*}$.
\end{proof}

Theorem~\ref{thm:dilatation} is a consequence of Theorem~\ref{thm:CG}, Lemma~\ref{lem:growth_rate} and Theorem~\ref{thm:dilatation_compressionbody}.\medskip

We now turn to Theorem~\ref{thm:finite}. The proof is by a careful analysis of the JSJ decomposition of knot complements, using Theorem~\ref{thm:dilatation} to bound the complexity of hyperbolic pieces in the decomposition. We refer to \cite{budney2006jsj} for a nice exposition of JSJ decomposition of knot complements.

\begin{proof}[Proof of Theorem~\ref{thm:finite}]
Pick any $J\le K$ that is not the unknot. Let $\Gamma_J$ denote the JSJ graph of $S^3\backslash J$, which is a rooted tree, where the root vertex $*$ corresponds to the JSJ piece adjacent to $J$. Every JSJ torus $T$, as well as the cusp boundary $\partial\nu(J)$, has a canonical parametrization $(m,\ell)$, well-defined up to overall sign, characterized by
\begin{itemize}
\item $m$ (the \textit{meridian}) is null-homologous in the complementary region of $T\subset S^3$ containing $J$;
\item $\ell$ (the \textit{longitude}) is null-homologous in the complementary region of $T\subset S^3$ not containing $J$;
\item $(m,\ell)$ is a positive basis when $T$ is oriented as the boundary of the solid torus it bounds in $S^3$.
\end{itemize}
We pick, once and for all, $(m,\ell)$ among the two choices, one for each JSJ torus $T$.

Conversely, the manifold $S^3\backslash J$, hence $J$ \cite{mca1989knots}, can be uniquely recovered from the following data in the natural way:
\begin{enumerate}
\item The JSJ graph $\Gamma_J$;
\item For every vertex $v$ of $\Gamma_J$, a compact oriented $3$-manifold $M_v$ with toroidal boundary;
\item For every vertex $v$ of $\Gamma_J$, a bijection $\pi_0(\partial M_v)\cong E(v)$ if $v\ne*$ and $\pi_0(\partial M_v\backslash\partial\nu(J))\cong E(v)$ if $v=*$, where $E(v)$ is the set of edges adjacent to $v$;
\item For every vertex $v$ of $\Gamma_J$ and every component $T$ of $\partial M_v$, a parametrization of $T$ by a pair of curves $(m,\ell)$.
\end{enumerate}
We show that the choices of each of (1)--(4) are finite.

Let $\phi_J\colon F_J\to F_J$ denote the monodromy of $J$, which is decomposed along the canonical reduction system into periodic and pseudo-Anosov pieces. Recall that $g(F_J)=g(J)\le g(K)$ \cite{zemke2019knot} and $\Delta_J|\Delta_K$ \cite{gilmer1984ribbon}.

(1) The canonical reduction system $\delta$ consists of a multicurve on $F_J$ without parallel components, hence its size is bounded. JSJ tori in $S^3\backslash J$ are in one-to-one correspondence with orbits of components of $\delta$ under $\phi_J$, hence have bounded size as well. It follows that the size of $\Gamma_J$ is bounded, hence the number of possible isomorphism types of the rooted tree $\Gamma_J$ is bounded.

(3) The number of choices is obviously finite.

(2)(4) We divide the proof into Seifert fibered and hyperbolic cases. Before we start, observe that $J$ has nonzero winding number in the solid torus bounded by any JSJ torus $T$. This is because otherwise we may assume $F_J\cap T=\emptyset$, implying that $\pi_1(F_J)=[\pi_1(S^3\backslash J),\pi_1(S^3\backslash J)]$ contains $\pi_1(T)=\Z^2$ as a subgroup, a contradiction.

Fix a vertex $v$ of $\Gamma_J$.

\textbf{Case 1}: $M_v$ is Seifert fibered.

By classification of Seifert fibered submanifolds of $S^3$, $M_v$ is homeomorphic to one of the following three families of manifolds \cite{budney2006jsj}:
\begin{itemize}
\item The complement of a keychain link $H_n\subset S^3$, $n\ge2$, defined as the unknot in $S^3$ together with $n$ of its distant meridians. This corresponds to a connected sum operation of knots. (In fact, in this case $v$ must be the root vertex.)
\item The complement of a $(p,q)$-cable pattern in $S^1\times D^2$ for some coprime $p,q$, where $p>1$. This corresponds to a $(p,q')$-cable operation of knots for some $q'\equiv q\pmod p$.
\item The complement of a $(p,q)$-torus knot for some coprime $p,q$, where $p,|q|>1$. This corresponds to a $T_{p,q}$ companion knot.
\end{itemize}
If $M_v$ is a keychain link complement, its complexity (namely the parameter $n$) is bounded by the size of $\Gamma_J$, which is bounded. The choice of allowable parametrizations $(m,\ell)$ on the boundaries of $M_v$ is unique up to symmetries of $M_v$ and signs.

If $M_v$ is a cable complement, its two boundary tori exhibit $J$ as an iterated satellite $J=P(C_{p,q}(C))$, where $P$ is the identity pattern if $v=*$. The boundary parametrization data of $M_v$ is determined by $(p,q)$ up to symmetries of $M_v$ and signs. Thus, we only need to bound the complexity of the pair $(p,q)$, where $p>1$ and $p,q$ are coprime.

Recall the following elementary facts.

\begin{Lem}\label{lem:satellite_genus_Alexander}
The genus and the Alexander polynomial of a satellite knot are given by $$g(P(C))=|w(P)|g(C)+\hat g(P),\ \Delta_{P(C)}(t)=\Delta_{P(U)}(t)\Delta_C(t^{w(P)}),$$ where $w(P)$ is the winding number of $P$, and $\hat g(P)$ is the minimal genus of an orientable surface in the solid torus cobounding $P$ and $w(P)$ copies of canonical longitudes on the boundary.\qed
\end{Lem}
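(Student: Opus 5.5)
The plan is to compute both invariants from a single Seifert surface built out of a Seifert surface for the companion and a relative surface for the pattern. Write $K=P(C)$, where $P\subset V=S^1\times D^2$ is a pattern with winding number $w=w(P)$ and $C$ is the companion. Let $h\colon V\to\nu(C)$ be the identification that sends the canonical longitude $\{*\}\times\partial D^2$-dual curve $S^1\times\{*\}\subset\partial V$ to the Seifert longitude of $C$.

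\textbf{Genus formula: upper bound.} Take a minimal genus relative surface $R\subset V$ realizing $\hat g(P)$. Its boundary is $P$ together with $|w|$ parallel copies of the longitude on $\partial V$, all oriented coherently after possibly tubing or cancelling oppositely oriented pairs. Take $|w|$ parallel copies of a minimal genus Seifert surface $F_C$ for $C$ in the exterior of $C$. Gluing $h(R)$ to these copies gives a connected Seifert surface for $K$, since $R$ is connected after the minimality normalization. Its genus is $\hat g(P)+|w|g(C)$, because gluing along circles does not change the Euler characteristic count beyond the standard additivity.

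\textbf{Genus formula: lower bound.} This is the standard Schubert argument. Take a minimal genus Seifert surface $F$ for $K$ and isotope it so that $F\cap\partial\nu(C)$ is a collection of essential curves, removing inessential ones by compressing, which is allowed by incompressibility of the torus when $C$ is nontrivial. Then cancel oppositely oriented adjacent pairs by cut-and-paste, which does not increase genus. After this, $F\cap\partial\nu(C)$ consists of $|w|$ coherently oriented longitudes. Here a meridian slope is impossible inside, and the slope is forced to be longitudinal because each curve bounds in the exterior of $C$ homologically. Consequently $F\cap E(C)$ is a union of Seifert surfaces for $C$, each of genus at least $g(C)$, and the inside piece is a relative surface of genus at least $\hat g(P)$. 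If $w=0$ the formula reads $g=\hat g(P)$ and the same argument applies. I expect this lower bound to be the main obstacle: one has to handle the cut-and-paste carefully, and also the possibly disconnected pieces, which only make the count larger.

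\textbf{Alexander polynomial formula.} For this part I would use Seifert–van Kampen / Mayer–Vietoris on the infinite cyclic cover. Decompose $X_K=(V\setminus\nu P)\cup_{T}E(C)$ with $T=\partial V$. The infinite cyclic cover restricted to $E(C)$ is induced by the map $\pi_1E(C)\to\Z$ given by $w$ times abelianization, so it is $\gcd$-many copies of $|w|$-fold-stretched covers, and its Alexander module is $\Lambda\otimes_{\Z[t^{w}]}$ of the module of $C$ with $t\mapsto t^w$. Torus pieces contribute trivial torsion, since the torsion of $T^2$ is $1$. Multiplicativity of Reidemeister/Milnor torsion under gluing along tori then gives
\[
\Delta_K(t)\doteq\tau(V\setminus\nu P)\cdot\Delta_C(t^w)\cdot(\text{correction}).
\]
The same decomposition applied to $P(U)$, where $E(U)$ is a solid torus, identifies $\tau(V\setminus\nu P)$ times the solid torus term with $\Delta_{P(U)}$. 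The solid torus correction factors $(t^w-1)$ cancel identically in both computations, and this yields $\Delta_{P(C)}(t)=\Delta_{P(U)}(t)\Delta_C(t^{w})$ up to units. When $w=0$ the cover over $E(C)$ is a trivial product and $\Delta_C(1)=1$, so the formula still holds. Alternatively, and more elementarily, one can read off the Seifert matrix of the surface built in the upper-bound step: it is block upper triangular, with $|w|$ coupled copies of the Seifert matrix of $C$ and the pattern block. Computing $\det(V-tV^T)$ directly then gives the same product.
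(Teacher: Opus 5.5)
The paper gives no proof of this lemma. It quotes both formulas as classical facts (Schubert's genus formula and Seifert's formula for the Alexander polynomial of a satellite), so your outline is the standard argument for each. The Alexander polynomial part is fine. For $w\neq0$ the torsion gluing works as you say, with the factors $(t^{w}-1)$ from $E(C)$ and from the solid torus $E(U)$ cancelling. For $w=0$, or uniformly in $w$, the Seifert matrix of $R\cup|w|F_C$ splits off the Seifert matrix of $P(U)$, because linking numbers of curves in $V$ do not depend on the companion once $V$ is glued with the $0$-framing. Two things in the genus part need fixing.

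\textbf{The hypothesis on $C$.} The genus formula is false when $C$ is the unknot. For the untwisted Whitehead double pattern, $w=0$ and $\hat g(P)=1$, but $P(U)$ is unknotted. So you need $C$ nontrivial; this is implicit in ``satellite knot'' and is all the paper uses. Your argument does use it, but in the wrong step.
\begin{itemize}
\item Curves of $F\cap\partial\nu(C)$ that are inessential on the torus are removed using incompressibility of $F$, not of the torus.
\item Incompressibility of the torus in the complement of $K$ needs $C$ nontrivial and $P$ not in a ball in $V$ (that case is trivial). What it buys is that no piece of $F$ cut along the torus is a disk.
\item That is exactly what guarantees the annulus cut-and-paste on oppositely oriented pairs creates no sphere components. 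Removing such a sphere would raise the genus.
\end{itemize}
Also, the slope being longitudinal follows from coherence, not from each curve bounding. If $n$ coherent parallel curves of slope $s$ bound in $E(C)$, then $ns\in\ker(H_1(\partial E(C))\to H_1(E(C)))=\langle\lambda\rangle$.

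\textbf{The step ``$F\cap E(C)$ is a union of Seifert surfaces for $C$.''} This is the heart of the lower bound, and it does not follow from what you wrote. A priori a single component of $F\cap E(C)$ could carry $k\ge2$ of the $|w|$ longitudes. For such a component, ``genus at least $g(C)$'' does not give the needed $-\chi\ge k(2g(C)-1)$. So the dangerous case is connectedness, not disconnectedness.

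The missing observation is that a connected, oriented, properly embedded surface in a $3$-manifold $M$ represents $0$ or a primitive class in $H_2(M,\partial M)$. Indeed, if it is nonseparating, some loop meets it exactly once. Apply this to the knot exterior:
\begin{itemize}
\item A component with $k$ coherent longitudes as boundary represents $k[F_C]$ in $H_2(E(C),\partial E(C))\cong\mathbb{Z}$, because $\partial$ is injective there. Hence $k=1$; $k=0$ would give a closed component of $F$.
\item $F\cap V$ is connected. A component not containing $K$ would have boundary equal to a nonzero multiple of the core in $H_1(V)$, which is impossible for a boundary.
\end{itemize}
With these two facts, the Euler characteristic count gives $g(F)\ge\hat g(P)+|w|g(C)$. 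Alternatively, you can get the per-component bound from linearity of the Thurston norm on rays.
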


By Lemma~\ref{lem:satellite_genus_Alexander}, we have $$\Delta_J(t)=\Delta_{P(U)}(t)\Delta_{T_{p,q}}(t^{w(P)})\Delta_C(t^{pw(P)}).$$ Since $w(P)\ne0$, $\Delta_{T_{p,q}}(t)=(t^{pq}-1)(t-1)/(t^p-1)(t^q-1)$, and $\Delta_J(t)|\Delta_K(t)$ is bounded, this bounds the complexity of $(p,q)$ unless $q=\pm1$. On the other hand, Lemma~\ref{lem:satellite_genus_Alexander} implies that $p\le p\cdot g(C)\le g(J)\le g(K)$ is bounded, hence the complexity of $(p,q)$ is bounded in any case.

If $M_v$ is a torus knot complement, then its boundary torus exhibits $J$ as a satellite $J=P(T_{p,q})$ where $P$ is the identity satellite pattern if $v=*$. The manifold $M_v$ together with its boundary parametrization data up to sign is determined from the pair $(p,q)$, where $p,|q|>1$ are coprime. By the Alexander polynomial bound as above, the complexity of $(p,q)$ is bounded.

\textbf{Case 2}: $M_v$ is hyperbolic.

(2) Hyperbolic pieces in the JSJ decomposition of $S^3\backslash J$ are in one-to-one correspondence with mapping tori of pseudo-Anosov pieces in the Nielsen--Thurston canonical decomposition of $\phi_J$. Since the fiber surface of each such mapping torus has bounded complexity $-\chi\le-\chi(F_J)\le2g(K)-1$, and since the dilatation constant of the monodromy is bounded by $\lambda(J)\le\lambda(K)$ by Theorem~\ref{thm:dilatation}, there are only finitely many possible hyperbolic mapping tori that may arise.

(4) First, we bound the number of longitudes $\ell$ on boundary components of $M_v$. Put the fiber surface $F_J$ in minimal position with respect to $\partial M_v$. Then the intersection between $F_J$ and a boundary $T\subset\partial M_v$ is $w$ parallel longitudes of $T$, where $w\ne0$ is the winding number of $J$ inside the solid torus bounded by $T$, determined up to sign. Moreover, $F_J\cap M_v$ is Thurston-norm-minimizing, with complexity bounded above by $-\chi(F_J)\le2g(K)-1$. Since $M_v$ is hyperbolic, the Thurston norm on $H_2(M_v,\partial M_v)$ is non-degenerate, implying that the possible number of homology classes of $[F_J\cap M_v]$ is finite. In particular, the number of possible longitudes on components $\partial M_v$, which are determined by $\partial[F_J\cap M_v]\in H_1(\partial M_v)$, is also finite.

Next, we bound the number of meridians $m$ on boundary components of $M_v$. Let $T_*,T_1,\cdots,T_n$ be the components of $\partial M_v$, where $T_*$ is the component closest to $J$ in $S^3$. By the Fox re-embedding theorem (see \cite[Proposition~2]{budney2006jsj}), the Dehn filling of $T_*\subset M_v$ by slope $m$ is homeomorphic to $\natural^n(S^1\times D^2)$. By bounds on exceptional surgeries, or more precisely the knot complement problem \cite{mca1989knots} when $n=0$ and bounds on boundary-reducible surgeries \cite{wu1992incompressibility} when $n>0$, there are finitely many such $m$ on $T_*$. If $n>0$, once the meridian $m$ on $T_*$ and all longitudes $\ell$ on, say $T_2,\cdots,T_n$, are chosen, the Dehn filling of $M_v$ by these slopes has a unique boundary component $T_1$, and the meridian $m$ on $T_1$ is uniquely characterized (up to sign) by being null-homologous in this filling. This proves the finiteness of the choice of meridians on $\partial M_v$.
\end{proof}

\section{Algorithmic compressions of surface homeomorphisms}\label{sec:compression}
In this section, we prove Theorem~\ref{thm:compression}, generalizing the main results of Casson--Long \cite{casson1985algorithmic} (Theorem~\ref{thm:CL}). The extension is obtained by a careful case-by-case analysis when the Nielsen--Thurston decomposition of the surface homeomorphism $\phi\colon S\to S$ is nontrivial. We remark that \cite{casson1985algorithmic} only considered the case when $S$ is closed and connected, but relevant techniques for pseudo-Anosov homeomorphisms extend to the case with boundary without much change, and the connectedness assumption can also be dropped for our convenience.

\begin{Rmk}
By Lemma~\ref{lem:centralizer}, a generating set of the symmetry group of the pair $(S,\phi)$ appearing in Theorem~\ref{thm:compression} is also computable.
\end{Rmk}

\subsection{Preliminaries and examples}\label{sbsec:comp_ex}
A compression $\Phi\colon C\to C$ of $\phi$ is \textit{nontrivial} if $C$ is not the product cobordism $I\times S$; it is \textit{minimal} if $C$ does not decompose into a composition of two nontrivial compressions such that $\Phi$ restricts to each of them.

\begin{Ex}\label{ex:no_symmetry}
The pseudo-Anosov assumption is necessary in Theorem~\ref{thm:CL} as stated. For instance, if $S$ has a component that is not a sphere, disk, annulus, or pair of pants, and $\phi$ is the identity map, then $\phi$ admits infinitely many minimal compressions, one for each isotopy class of essential simple closed curve in $S$.
\end{Ex}

In view of Example~\ref{ex:no_symmetry}, it is necessary to account for the symmetries in order to obtain a finiteness result when the pseudo-Anosov assumption is dropped. 

If $f\colon S\to S$ is a surface homeomorphism, we define the \textit{pushforward} of a compression $C$ of $\phi$ by $f$, denoted $f_*C$, to be the compression of $f_*\phi=f\phi f^{-1}\colon S\to S$ given by $C$ but with the exterior boundary reparametrized by $f$. Alternatively, if $S$ is connected, the compression $C$ of $\phi$ is determined by the $\phi_*$-invariant normal subgroup $N=\ker(\pi_1S\to\pi_1C)$, and the pushforward of $C$ by $f$ is determined by the $(f_*\phi)_*$-invariant normal subgroup $f_*N$.

If $c\colon S\to S$ is a surface homeomorphism rel boundary that commutes with $\phi\colon S\to S$ up to isotopy rel boundary, then $c_*\phi$ is isotopic to $\phi$ rel boundary; hence, for a compression $C$ of $\phi$, the pushforward $c_*C$ can be naturally regarded as a compression of $\phi$ again. The \textit{symmetry group} of $(S,\phi)$, denoted $C(S,\phi)$, is the subgroup of the rel boundary mapping class group of $S$ represented by such surface homeomorphisms $c$. Then, $C(S,\phi)$ acts on the set of compressions of $\phi$. The action of $C(S,\phi)$ is the symmetry that we mod out in the statement of Theorem~\ref{thm:compression}.

Since the centralizer of a pseudo-Anosov mapping class $[\phi]$ (with free boundary condition) is a finite extension of the infinite cyclic group generated by $[\phi]$ \cite{mccarthy1982normalizers}, and $\phi_*$ acts trivially on the set of compressions of $\phi$, Theorem~\ref{thm:compression}(1), in the case when $\phi$ is pseudo-Anosov, is equivalent to Theorem~\ref{thm:CL}(1). In fact, although $\phi$ itself may not be a symmetry of $(S,\phi)$ since it may not be rel boundary, some power $\phi^N$ that permutes $\pi_0(\partial S)$ trivially would be freely isotopic to a symmetry of $(S,\phi)$.

\begin{Ex}
Let $C$ be the compression body with $\partial_iC=T^2$ and $\partial_eC=\Sigma_3$, where $\Sigma_3$ is the genus $3$ closed surface. The proof of \cite[Lemma~3.4]{biringer2013extending} shows that there exist two simple closed curves $\alpha,\beta$ on $\Sigma_3$ that compress in $C$ so that $\alpha\cup\beta$ (in minimal position) fills $\Sigma_3$. By Thurston \cite[Theorem~7]{thurston1988geometry}, many compositions of Dehn twists about $\alpha,\beta$ (e.g. $T_\alpha\circ T_\beta^{-1}$) are pseudo-Anosov. Since $T_\alpha,T_\beta$ each compress in $C$ to $\mathrm{id}_{T^2}\colon T^2\to T^2$, we know any such pseudo-Anosov homeomorphism $\phi\colon\Sigma_3\to\Sigma_3$ compresses in $C$ to $\mathrm{id}_{T^2}$. Since $T^2$ has infinitely many isotopy classes of simple closed curves, we obtain infinitely many handlebody compressions of $\phi$. This example shows that the minimality assumption in Theorem~\ref{thm:CL} is necessary, hence the minimality assumption in Theorem~\ref{thm:compression} is also necessary in view of the previous paragraph.
\end{Ex}

\begin{Ex}
Let $S$ be a surface with nonempty boundary and $\phi\colon S\to S$ be a pseudo-Anosov homeomorphism. Let $D(S)$ denote the double of $S$ along the boundary, and $D(\phi)\colon D(S)\to D(S)$ the double of $\phi$. Then $D(\phi)$ is a reducible surface homeomorphism with two Nielsen--Thurston pieces, which compresses in the handlebody $I\times S$. In fact, it will follow from the argument in the next section that $I\times S$ is a minimal compression of $D(\phi)$; in particular, the compression is not supported on a single Nielsen--Thurston piece. This example shows that Theorem~\ref{thm:compression} does not reduce directly to the periodic and pseudo-Anosov cases.
\end{Ex}

\subsection{Finiteness of minimal compressions}\label{sbsec:comp_finite}
Throughout the rest of Section~\ref{sec:compression}, we may assume without loss of generality that $\phi\colon S\to S$ acts transitively on the set of connected components of $S$. Then, if $S\ne\emptyset$, compressions of $\phi$ are in natural one-to-one correspondence with compressions of the restriction of $\phi^{\#\pi_0(S)}$ to one connected component of $S$; thus, we may further assume $S$ is connected. The cases when $S$ is empty, a disk, a sphere, or an annulus are easy, so we assume $S\ne\emptyset$ is hyperbolic.

Let $\delta\subset S$ denote the canonical reduction system of $\phi\colon S\to S$. Write $S\backslash\nu(\delta)=\sqcup_{i=1}^\ell S_i$ so that $\phi$ (after an isotopy) restricts to either a periodic or a pseudo-Anosov homeomorphism on each $S_i$, permuting its components cyclically. By assumption, each component of each $S_i$ is hyperbolic.

\begin{proof}[Proof of Theorem~\ref{thm:compression}(1)]
Let $\Phi\colon C\to C$ be a minimal compression of $\phi$.

Pick an essential (simple closed) curve $\gamma\subset S$ that compresses in $C$ and is in general position with respect to $\delta$, with $|\gamma\cap\delta|$ minimized. By a careful cut-and-paste argument applied to $\gamma$ and a (pulled-tight) iterate of $\gamma$ under a power of $\phi$, we will show that $|\gamma\cap\delta|\in\{0,2,4\}$, and later show that $|\gamma\cap\delta|=4$ cannot occur. Then we treat $|\gamma\cap\delta|=0$ (three subcases depending on whether $\gamma$ is parallel to a canonical reduction curve, essentially contained in a periodic piece, or essentially contained in a pseudo-Anosov piece) and $|\gamma\cap\delta|=2$ (three subcases, giving rise to product or twisted product regions in the compression body $C$). In each of the six surviving subcases, the compression is determined by finite data up to symmetries, giving the desired finiteness. Later on, Section~\ref{sbsec:comp_forms} will build on the proof to give descriptions of the six different forms of $C$.

Pick hyperbolic metrics on the complementary pieces $S_i$ with totally geodesic boundaries, and Euclidean metrics on $\nu(\delta)$ so that $\delta$ is geodesic. We may pull $\gamma$ tight, so that it is geodesic on each $S_i$ perpendicular to the boundaries, and geodesic on $\nu(\delta)$ (if there are parallel arcs on $\gamma$ that get identified to a single arc in this procedure, perturb them slightly apart to make sure $\gamma$ is still a simple closed curve).

Pick $N>0$ so that $\phi^N$ preserves each component of $\partial\nu(\delta)$ (hence also of $S\backslash\nu(\delta)$), and that $\phi^N$ restricts to the identity map on the periodic pieces of $\phi$. Pick a large $q>0$. Pull $\phi^{Nq}(\gamma)$ tight to some simple closed curve $\gamma_q$ in the same procedure as before. Then perturb $\gamma_q$ slightly on a neighborhood of the periodic pieces, so that $\gamma,\gamma_q,\delta$ are in pairwise minimal position.

If $\gamma\cap\delta$ is nonempty, then either $\gamma$ enters some pseudo-Anosov piece of $S$, or otherwise $\gamma$ and $\phi^{Nq}(\gamma)$ differ by some nontrivial multi-twist along a multicurve on $S$ consisting of copies of components of $\delta$ that intersect $\gamma$. In the first case, $\gamma\cap\gamma_q$ is nonempty for large $q$. In the second case, $\gamma\cap\gamma_q$ is nonempty for $q\ge2$ since in such case some intersection between $\gamma$ and $\phi^{Nq}(\gamma)$ on $\nu(\delta)$ cannot be perturbed away.

By assumption, $\gamma,\gamma_q$ bound some disks $D,D'$ in $C$, respectively. Since $C$ is irreducible, isotoping $D'$ rel boundary if necessary, we may assume $D$ and $D'$ intersect transversely in some disjoint union of double arcs. If $e$ is a double arc that is innermost on $D$, in the sense that there is some arc $\alpha\subset\gamma$ with $\partial\alpha=\partial e$ and $int(\alpha)\cap\gamma_q=\emptyset$, then we may pick an arc $\beta\subset\gamma_q$ with $\partial\beta=\partial e$ and $|\beta\cap\delta|\le\tfrac12|\gamma_q\cap\delta|=\tfrac12|\gamma\cap\delta|$, and apply a cut-and-paste along $e$ to obtain an essential curve $\gamma':=\alpha\cup\beta\subset S$ that compresses in $C$. Some choice of $\alpha$ would contradict the minimality of $|\gamma\cap\delta|$ unless every such innermost $\alpha$ on $\partial D$ (and innermost $\beta$ on $\partial D'$) intersects $\delta$ exactly $\tfrac12|\gamma\cap\delta|$ times. Moreover, if $\gamma\cap\gamma_q$ contains a point near $\delta$ (i.e. inside $\nu(\delta)$, or equivalently, $\gamma,\gamma_q,\delta$ cobound a triangle), then we may move such a point to the other side of $\delta$ to arrange the existence of some innermost $\alpha$ that contradicts the minimality of $|\gamma\cap\delta|$. It follows, in the case $\gamma\cap\delta\ne\emptyset$, that
\begin{itemize}
\item $D\cap D'$ consists of parallel arcs on $D$ (resp. $D'$);
\item $\gamma\cap\delta=A\sqcup B$ for some sets $A,B$ with the same cardinality, such that each double arc divides $D$ into two parts containing $A$ and $B$, respectively; similarly for $\gamma_q$;
\item No intersection point in $\gamma\cap\gamma_q$ is near $\delta$.
\end{itemize}
Consequently, when $\gamma\cap\delta\ne\emptyset$, all intersections $\gamma\cap\gamma_q$ lie essentially in some connected components $\Sigma_1,\Sigma_2$ (not necessarily distinct) of $S\backslash\nu(\delta)$, with those in $\Sigma_1$ being in bijection with those in $\Sigma_2$, the bijection given by arcs in $D\cap D'$. Moreover, both $\Sigma_1$ and $\Sigma_2$ are contained in pseudo-Anosov pieces of $S$, since otherwise the intersection points would be near $\delta$. See Figure~\ref{fig:D} for an illustration when $|\gamma\cap\delta|=4$. The intersection $\gamma\cap\partial(\nu(\delta))$ divides $\gamma$ into consecutive segments lying in $\Sigma_1,\nu(\delta_1),\Sigma_1',\nu(\delta_1')\cdots,$ $\Sigma_r',\nu(\delta_r'),\Sigma_2,\nu(\delta_2),\Sigma_1'',\nu(\delta_1''),\cdots,\Sigma_r'',\nu(\delta_r'')$ for some connected components $\Sigma_i',\Sigma_i''$ of $S\backslash\nu(\delta)$ and $\delta_1,\delta_2,\delta_i',\delta_i''$ of $\delta$, $i=1,\cdots,r$, $r\ge0$. The same description applies to $\gamma_q$. As there are no intersections between $\gamma\cap\gamma_q$ in $\Sigma_i'$ and $\Sigma_i''$, taking $q$ large implies that all $\Sigma_i'$ and $\Sigma_i''$ are contained in periodic pieces of $S$. Also, by taking $q\ge2$, we see that $r\le1$, since otherwise $\gamma$ and $\gamma_q$ would intersect near $\delta_1'$.

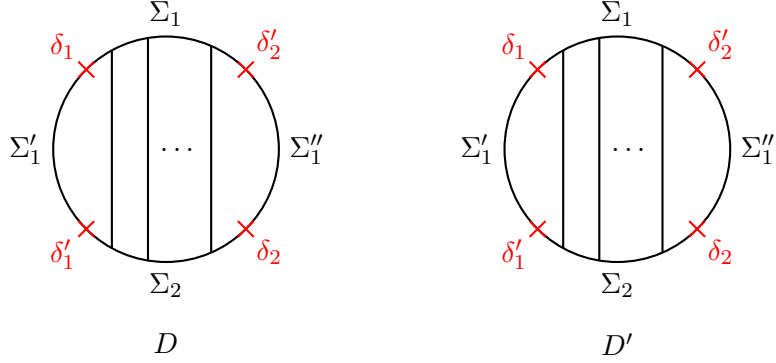
\begin{figure}
\begin{tikzpicture}[
    xscale=.6,
    yscale=.6,
    thick,
    >=stealth,
    red cross/.style={
        cross out,
        draw=red,
        minimum size=6pt, 
        inner sep=0pt,
        outer sep=0pt,
        thick
    }
]

\def\R{2.5}

\begin{scope}[local bounding box=leftfig]
    \draw (0,0) circle (\R);

    \node[above] at (90:\R) {$\Sigma_1$};
    \node[below] at (270:\R) {$\Sigma_2$};
    \node[left]  at (180:\R) {$\Sigma_1'$};
    \node[right] at (0:\R)   {$\Sigma_1''$};

    \foreach \x in {-1.2, -0.4, 1.0} {
        \pgfmathsetmacro{\y}{sqrt(\R*\R - \x*\x)}
        \draw (\x, \y) -- (\x, -\y);
    }
    \node at (0.3,0) {$\dots$};

    \node[red cross] (d1) at (135:\R) {};
    \node[red, above left] at (d1) {$\delta_1$};

    \node[red cross] (d1p) at (225:\R) {};
    \node[red, below left] at (d1p) {$\delta_1'$};

    \node[red cross] (d2p) at (45:\R) {};
    \node[red, above right] at (d2p) {$\delta_2'$};

    \node[red cross] (d2) at (315:\R) {};
    \node[red, below right] at (d2) {$\delta_2$};

    \node[below=0.8cm] at (0,-\R) {$D$};
\end{scope}

\begin{scope}[xshift=10cm, local bounding box=rightfig]
    \draw (0,0) circle (\R);

    \node[above] at (90:\R) {$\Sigma_1$};
    \node[below] at (270:\R) {$\Sigma_2$};

    \foreach \x in {-1.2, -0.4, 1.0} {
        \pgfmathsetmacro{\y}{sqrt(\R*\R - \x*\x)}
        \draw (\x, \y) -- (\x, -\y);
    }
    \node at (0.3,0) {$\dots$};

    
    \node[red cross] (rd1) at (135:\R) {};
    \node[red, above left] at (rd1) {$\delta_1$};
    
    \node[red cross] (rd1p) at (225:\R) {};
    \node[red, below left] at (rd1p) {$\delta_1'$};
    
    \node[left] at (180:\R) {$\Sigma_1'$};

    \node[red cross] (rd2p) at (45:\R) {};
    \node[red, above right] at (rd2p) {$\delta_2'$};
    
    \node[red cross] (rd2) at (315:\R) {};
    \node[red, below right] at (rd2) {$\delta_2$};

    \node[right] at (0:\R) {$\Sigma_1''$};

    \node[below=0.8cm] at (0,-\R) {$D'$};
\end{scope}

\end{tikzpicture}
\caption{The disks $D,D'$ with their double arcs and boundary data in the case $|\gamma\cap\delta|=4$. The components $\Sigma_1,\Sigma_2$ are pseudo-Anosov, and the components $\Sigma_1',\Sigma_1''$ are periodic.}
\label{fig:D}
\end{figure}

We have thus shown, in particular, that $|\gamma\cap\delta|\in\{0,2,4\}$. In fact, we will further rule out the case $|\gamma\cap\delta|=4$ later in the proof.

\textbf{Case 1}: $|\gamma\cap\delta|=0$.

\textbf{Case 1.1}: $\gamma$ is parallel to some component $\delta_1$ of $\delta$ or some component $\partial_1S$ of $\partial S$.

Then, $C$ is the compression body that compresses each component of the multicurve $\cup_{i\ge0}\phi^i(\delta_1)\subset S$ or each component of (a slight push-in of) the multicurve $\cup_{i\ge0}\phi^i(\partial_1S)\subset S$.

\textbf{Case 1.2}: $\gamma$ is essentially contained in some periodic piece $S_1$ of $S$.

Pick a hyperbolic metric on $S_1$ invariant under the periodic map $\phi_1:=\phi|_{S_1}$, with respect to which $\gamma$ is geodesic. We may assume $\gamma$ is chosen to have minimal length. Then, by a cut-and-paste argument as before (cf. \cite[Lemma~4.2]{bonahon1983cobordism}), each $\phi_1^i(\gamma)$ is either disjoint from $\gamma$ or identical to $\gamma$. Hence, $C$ is the compression body that compresses each component of the multicurve $\cup_{i\ge0}\phi^i(\gamma)$.

It remains to prove that the number of such $\gamma$ on $S_1$ is finite up to symmetries of $(S_1,\phi_1)$, namely up to homeomorphisms of $S_1$ rel boundary that commute with $\phi_1$ up to isotopy rel boundary (note that such a homeomorphism extends by the identity map to a symmetry of $(S,\phi)$). In fact, the homeomorphisms we construct will commute strictly with $\phi_1$.

Let $K$ denote the period of $\phi_1$. Then $\phi_1$ is the positive generator of the deck transformation group for the $K$-fold cyclic cover $\pi\colon S_1\to\overline{S_1}$, where $\overline{S_1}$ denotes the orbifold $S_1/\phi_1$. The curve $\gamma$ maps to an essential simple closed curve $\bar\gamma=\pi(\gamma)\subset\overline{S_1}$. It remains to prove that there exist only finitely many simple closed curves on $\overline{S_1}$ up to homeomorphisms that lift to rel boundary homeomorphisms of $S_1$.

Consider the rel boundary homeomorphism group $G_0=\mathrm{Homeo}_{\partial}^+(\overline{S_1})$ of $\overline{S_1}$, its subgroup $G_1$ of homeomorphisms that lift to $S_1$, and its subgroup $G_2$ of homeomorphisms that lift to rel boundary homeomorphisms of $S_1$. Since there are only finitely many $K$-fold coverings of $\overline{S_1}$, we know $G_1$ is of finite index in $G_0$. Since there are only finitely many automorphisms of $\partial S_1$ that intertwine with $\pi|_{\partial S_1}$, we know $G_2$ is of finite index in $G_1$. Now, since the topological type of the complement of a simple closed curve in $\overline{S_1}$ is restricted, there are only finitely many simple closed curves in $\overline{S_1}$ up to $G_0$. Consequently, there are only finitely many simple closed curves in $\overline{S_1}$ up to $G_2$, as desired.

\textbf{Case 1.3}: $\gamma$ is essentially contained in some pseudo-Anosov piece $S_1$ of $S$.

Fix a complete hyperbolic metric on $int(S_1)$, and let $\gamma$ be geodesic. The proof of \cite[Theorem~1.2]{casson1985algorithmic} extends to the punctured case and shows that the length of $\gamma$ is bounded above by a constant depending on $\phi|_{S_1}$ and the metric. This implies the finiteness of such $\gamma$, hence of minimal compressions $C$ containing such $\gamma$ by \cite[Corollary~2.5]{casson1985algorithmic}.

\textbf{Case 2}: $|\gamma\cap\delta|=2$.

Applying a further cut-and-paste operation if necessary, we may assume, for ease of notation, that $\delta_1=\delta_2$.

\textbf{Case 2.1}: $\Sigma_1\ne\Sigma_2$.

One can observe that in this case, the singular surface $D\cup D'$ is homeomorphic to $I\times((\gamma\cup\gamma_q)\cap\Sigma_1)$.

We may assume $q$ is large enough so that $\gamma$ and $\gamma_q$ \textit{fill} $\Sigma_i$, $i=1,2$, meaning that each complementary region of their union in $\Sigma_i$ is either a disk whose intersection with $\partial\Sigma_i$ has at most one component, or an annular neighborhood of a boundary component. One can deduce this fact (except in sporadic cases) from \cite[Proposition~4.6]{masur1999geometry} which says that a pseudo-Anosov homeomorphism (on a surface that is not a four-punctured sphere or a one-punctured torus) acts on the curve complex with positive translation length, and \cite[Lemma~4.10]{masur2013geometry} which says that the curve complex embeds quasi-isometrically into the arc and curve complex.

The union of closed faces of $D\cup D'$ adjacent to an interior disk component $D_0$ of the complement of $\gamma\cup\gamma_q$ in $\Sigma_1$ (resp. $\Sigma_2$) is an annulus with boundary $\partial D_0\sqcup\gamma'$ where $\gamma'$ is a simple closed curve on $\Sigma_2$ (resp. $\Sigma_1$). Since $\gamma'$ compresses in $C$, it is necessarily trivial on $S$, hence on $\Sigma_2$ (resp. $\Sigma_1$). Consequently, the interior boundary of $\nu(S\cup D\cup D')$ contains some spheres that each contain an interior disk component of the complement of $\gamma\cup\gamma_q$ in $\Sigma_1$ and one such disk component in $\Sigma_2$. Filling in these spheres by balls, we see a product region $I\times\Sigma_1\hookrightarrow C$ cobounding $\Sigma_1$ and $\Sigma_2$. There are four other sphere components of the interior boundary of $\nu(S\cup D\cup D')$ that each contain a boundary disk component of the complement of $\gamma\cup\gamma_q$ in $\Sigma_1$ and one such disk component in $\Sigma_2$. Filling in these spheres by balls as well, we obtain a product region $I\times\Sigma_1$ in $C$ as before, except that now the $I\times\partial_1\Sigma_1$ part of its boundary is glued to $\nu(\delta_1)$, where $\partial_1\Sigma_1$ is the boundary component of $\Sigma_1$ adjacent to $\nu(\delta_1)$. This product region gives rise to a compression body $C_{00}\subset C$ with exterior boundary $S$.

Observe that $\phi^i(\Sigma_1)=\Sigma_1$ or $\Sigma_2$ if and only if $\phi^i(\Sigma_2)=\Sigma_2$ or $\Sigma_1$, since otherwise a cut-and-paste operation applied to disks bounded by $\gamma$ and $\phi^{i+Nq}(\gamma)$ for large $q$ decreases $|\gamma\cap\delta|$. Therefore, the union $C'':=\cup_{i\ge0}\Phi^i(C_{00})\subset C$ is a compression body with exterior boundary $S$ that consists of disjoint product regions $P_i$ cobounding $\phi^i(\Sigma_1)$ and $\phi^i(\Sigma_2)$, $i=0,1,\cdots,M-1$, where $M$ is the minimal integer with $\phi^M(\Sigma_1)=\Sigma_1$ or $\Sigma_2$. A vertical annulus boundary of a product region $P_i$ adjacent to some $\phi^j(\nu(\delta_1))$ is glued to such $\phi^j(\nu(\delta_1))$. Since $\Phi$ restricts to $C''$ and $C$ is minimal, we see $C''=C$.

The compression body $C$ is determined by the product region $P_0$, which is in turn determined by an orientation-reversing surface homeomorphism $\theta\colon\Sigma_1^+\to\Sigma_2^+$ where $\Sigma_i^+$ is the union of $\Sigma_i$ and boundary annuli in $S\backslash int(\Sigma_i)$ bounded by $\cup_{j\ge0}\phi^j(\delta_1)$, $i=1,2$. A boundary of $\Sigma_i^+$ is called a \textit{fixed boundary} if it is a copy of some $\phi^j(\delta_1)$, and a \textit{free boundary} otherwise, $i=1,2$. The surface homeomorphism $\theta$ is rel the fixed boundaries of $\Sigma_1^+,\Sigma_2^+$ (which are canonically identified), and is considered up to isotopies rel the fixed boundaries. It remains to show that for any fixed $\Sigma_1,\Sigma_2,\delta_1$, the number of such $\theta\colon\Sigma_1^+\to\Sigma_2^+$ is finite up to symmetries.

\textbf{Case 2.1.1}: $\phi^M(\Sigma_1)=\Sigma_1$.

We first prove the following lemma.

\begin{Lem}\label{lem:commutator_no_boundary_twist}
Let $\Sigma$ be a compact oriented surface and $\alpha,\beta\colon\Sigma\to\Sigma$ be orientation-preserving surface homeomorphisms. Assume moreover that $\alpha$ is rel boundary, and $\beta$ permutes the boundary components trivially. If $[\alpha,\beta]$ is isotopic to $\mathrm{id}$, then it is isotopic to $\mathrm{id}$ rel boundary.
\end{Lem}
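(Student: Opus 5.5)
The idea is to compare the free and rel boundary mapping class groups through the capping/boundary-twist exact sequence. Let $\mathrm{Mod}_\partial(\Sigma)$ denote the rel boundary mapping class group, and $\mathrm{Mod}(\Sigma)$ the group of isotopy classes of homeomorphisms that permute boundary components trivially, with isotopies free on the boundary. The forgetful map $\mathrm{Mod}_\partial(\Sigma)\to\mathrm{Mod}(\Sigma)$ is surjective. Its kernel is generated by Dehn twists $T_{c_1},\dots,T_{c_k}$ about curves $c_j$ parallel to the boundary components; it is free abelian of rank the number of boundary components not lying on disk or annulus components, with appropriate modifications for annuli. This kernel is central in $\mathrm{Mod}_\partial(\Sigma)$.

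\textbf{Setting up.} Choose a representative of $\beta$ that is the identity on $\partial\Sigma$; this is possible because $\beta$ fixes each boundary component setwise and preserves orientation, so its boundary restriction is isotopic to the identity component-wise. Then $\alpha,\beta$ define classes $a,b\in\mathrm{Mod}_\partial(\Sigma)$, and $[\alpha,\beta]$ is represented by $[a,b]$. Changing the lift of $\beta$ multiplies $b$ by a central boundary twist, so $[a,b]$ is unchanged. Thus $[a,b]$ depends only on the rel-boundary class of $\alpha$ and the free class of $\beta$. The hypothesis says $[a,b]$ lies in the kernel, i.e.
$$aba^{-1}=b\,T_{c_1}^{n_1}\cdots T_{c_k}^{n_k},$$
and the goal is to show every $n_j=0$.

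\textbf{Showing the exponents vanish.} I would detect boundary twists with a homomorphism-like invariant that is conjugation invariant. For each boundary component $c_j$, use the rotation/twisting number at $c_j$. Concretely, pick a proper arc $\eta$ from $c_j$ and compare $b(\eta)$ with $\eta$ via the fractional Dehn twist coefficient $\tau_j\colon\mathrm{Mod}_\partial(\Sigma)\to\mathbb{Q}$. This function is conjugation invariant and satisfies $\tau_j(x\,T_{c_j}^{n})=\tau_j(x)+n$ for central twists. Then
$$\tau_j(b)=\tau_j(aba^{-1})=\tau_j(b)+n_j,$$
so $n_j=0$.

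\textbf{Main obstacle.} The hardest part is justifying the properties of $\tau_j$ in full generality, for arbitrary surfaces including reducible and periodic classes. As a cleaner alternative I would try this: the exact sequence $1\to\mathbb{Z}\to\mathrm{Mod}_\partial(\Sigma)\to\mathrm{Mod}(\Sigma\cup_{c_j}\text{punctured disk})$ is a central extension. Its restriction to the cyclic subgroup generated by $b$ defines a homogeneous quasimorphism, namely the translation number on the circle of rays at $c_j$ (the boundary of the universal cover lifting). Homogeneous quasimorphisms are conjugation invariant and additive on commuting elements, which gives exactly the identity above. The degenerate cases — disks, annuli, and small surfaces where the twist is trivial or the extension splits — would be handled directly.
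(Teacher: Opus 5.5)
Your argument is correct, but it takes a different route from the paper.

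\textbf{The paper's route.} It first caps off boundary components to reduce to a single nonzero power $T_1^{n_1}$ of one boundary twist. It then shows this power is not a commutator in the rel boundary mapping class group by splitting on genus:
\begin{itemize}
\item in genus $0$ it reduces to the annulus, whose rel boundary mapping class group $\Z$ is abelian;
\item in genus $1$ it uses the identification of the rel boundary mapping class group of the one-holed torus with $B_3$, where the boundary twist has exponent sum $12$ and so does not even lie in the commutator subgroup;
\item in genus $\ge2$ it quotes the Baykur--Korkmaz--Monden result that nonzero powers of a boundary twist are not commutators.
\end{itemize}

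\textbf{Your route.} You use, at every boundary component $c_j$ simultaneously, the fractional Dehn twist coefficient $\tau_j$ as a homogeneous quasimorphism on all of $\mathrm{Mod}_\partial(\Sigma)$. You combine it with the elementary fact that if $aba^{-1}=bz$ with $z$ central, then every homogeneous quasimorphism vanishes on $z$: conjugation invariance gives $\tau_j(aba^{-1})=\tau_j(b)$, and additivity on commuting elements gives $\tau_j(bz)=\tau_j(b)+\tau_j(z)$.

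\textbf{Comparison.} Your argument is uniform in the genus, needs no capping, and handles all boundary components at once. In place of the commutator-length theorem (gauge-theoretic in its original proof), it relies only on the standard existence of $\tau_j$ with $\tau_j(T_{c_j})=1$. The paper's route avoids any fractional Dehn twist theory in this lemma and gives a slightly stronger conclusion in genus $1$, at the cost of a case analysis and a heavier citation.

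\textbf{Two points to tighten.}
\begin{itemize}
\item You implicitly use $\tau_j(T_{c_i})=0$ for $i\neq j$. This is true because $T_{c_i}$ is supported away from $c_j$, but it should be stated.
\item In your fallback paragraph, the quasimorphism must be defined on the whole group $\mathrm{Mod}_\partial(\Sigma)$, via its action on the line covering the circle of directions at $c_j$. Its restriction to $\langle b\rangle$ is not enough, since invariance under conjugation by $a$ is exactly what the argument needs.
\end{itemize}
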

\begin{proof}
Without loss of generality, assume $\Sigma$ is nonempty and connected. By adding collars of isotopies to the boundaries, we may assume $\beta$ is also rel boundary. By assumption, $[\alpha,\beta]$ is isotopic rel boundary to some multi-twist $T_1^{n_1}\cdots T_k^{n_k}$, where $T_i$ is the Dehn twist along the $i$-th boundary component of $\Sigma$. Suppose, for the sake of contradiction, that $T_1^{n_1}\cdots T_k^{n_k}$ is not isotopic to $\mathrm{id}$ rel boundary. Then, by capping off some boundary components of $\Sigma$ if necessary, we may assume that $k=1$, $n_1\ne0$, and $\Sigma$ is not a disk.

\textit{Case 1}: $\Sigma$ has genus $0$.

By capping off all but the first two boundary components, we may assume $\Sigma=I\times S^1$. We obtain a contradiction since the rel boundary mapping class group of $\Sigma$ is $\Z$ generated by the Dehn twist.

\textit{Case 2}: $\Sigma$ has genus $1$.
 
By capping off all but the first boundary component, we may assume $\Sigma$ has only one boundary. Then, the rel boundary mapping class group of $\Sigma$ is isomorphic to the $3$-strand braid group $B_3$ by the Birman--Hilden theory; see \cite[Theorem~9.2]{farb2011primer}. Moreover, under the isomorphism, the boundary Dehn twist on $\Sigma$ corresponds to the square of the full twist in $B_3$, which maps to $12$ under the abelianization map $B_3\to(B_3)_{ab}\cong\Z$ given by summing the exponents in a word of Artin generators. In particular, any power of the boundary Dehn twist is not contained in the commutator subgroup of the mapping class group, a contradiction.

\textit{Case 3}: $\Sigma$ has genus at least $2$.

By \cite[Theorem~2]{baykur2013sections} (or \cite{baykur2014flat} for a proof without gauge theory) on the commutator length, $T_1^{n_1}$ is not equal to a commutator in the (rel boundary) mapping class group, a contradiction.
\end{proof}

We return to the proof of Case 2.1.1. Let $\theta,\theta'\colon\Sigma_1^+\to\Sigma_2^+$ be two orientation-reversing homeomorphisms rel the fixed boundaries, giving rise to product regions $P_0,P_0'$ in some minimal compressions $C,C'$, as described above. We claim that if $\theta,\theta'$ induce equal maps $\pi_0(\partial\Sigma_1^+)\to\pi_0(\partial\Sigma_2^+)$ (there are finitely many such maps), then $C,C'$ are related by a symmetry of $(S,\phi)$.

To this end, isotope $\theta'$ rel the fixed boundaries so that $\theta|_{\partial\Sigma_1^+}=\theta'|_{\partial\Sigma_1^+}$. Let $k$ be the minimal positive integer so that $\phi^{kM}|_{\Sigma_1}$ permutes the boundary components of $\Sigma_1$ trivially.

Since $\phi^M$ restricts to $\Sigma_1$ and $\Sigma_2$, $\Phi^M$ restricts to a map on the product region $P_0$ preserving $\Sigma_i^+$, $i=1,2$. Since the top and the bottom of $P_0$ are identified via $\theta$, we know that $\phi^M|_{\Sigma_1^+}$ is homotopic to, hence isotopic to $\theta^{-1}\circ\phi^M|_{\Sigma_2^+}\circ\theta$, rel the fixed boundaries. The same holds for $\theta'$. It follows that the rel boundary homeomorphism $\alpha:=\theta^{-1}\circ\theta'\colon\Sigma_1^+\to\Sigma_1^+$ commutes with $\beta_0:=\phi^M|_{\Sigma_1^+}$ up to isotopy rel the fixed boundaries. Therefore, $[\alpha,\beta_0]$ is isotopic rel boundary to some multi-twist $\prod_{i=1}^r\prod_{j=1}^{k_i}T_{i,j}^{n_{i,j}}$ where $i$ runs over orbits under $\beta_0$ of free boundary components of $\Sigma_1^+$, $j$ runs over free boundary components in the $i$-th orbit in (cyclic) order, and $T_{i,j}$ is the Dehn twist along the $(i,j)$-th free boundary component of $\Sigma_1^+$. Let $\beta:=\beta_0^k$, which is by assumption a surface homeomorphism of $\Sigma_1^+$ that permutes the boundary components trivially. Since $\beta_0T_{i,j}\beta_0^{-1}$ is isotopic rel boundary to $T_{i,j+1}$, we see that $[\alpha,\beta]$ is isotopic to $\prod_{i=1}^r\prod_{j=1}^{k_i}T_{i,j}^{N_i}$ rel boundary, for $N_i=\frac{k}{k_i}\sum_{j=1}^{k_i}n_{i,j}$, $i=1,\cdots,r$. On the other hand, $[\alpha,\beta]$ is isotopic to the identity rel boundary by Lemma~\ref{lem:commutator_no_boundary_twist}. Since $\Sigma_1^+$ is hyperbolic, the boundary Dehn twists are linearly independent, implying that $N_i=0$ for all $i$. Now, by isotoping $\theta'$ rel the fixed boundaries, we may replace $\theta'$ by $\theta'':=\theta'\prod_{i=1}^r\prod_{j=1}^{k_i}T_{i,j}^{r_{i,j}}$ where $r_{i,j}=\sum_{s=j+1}^{k_i}n_{i,s}$. Then $\alpha':=\theta^{-1}\circ\theta''$ commutes with $\beta_0=\phi^M|_{\Sigma_1^+}$ up to isotopy rel boundary. Now, define a surface homeomorphism $c\colon S\to S$ rel boundary by $\phi^i\circ\alpha'\circ\phi^{-i}$ on $\phi^i(\Sigma_1^+)$, $i=0,1,\cdots,M-1$, extended by the identity to the rest of $S$. Then $c$ commutes with $\phi$ up to isotopy rel boundary, thus defines a symmetry of $(S,\phi)$. One may easily check that $c_*C'=C$, proving the claim.

\textbf{Case 2.1.2}: $\phi^M(\Sigma_1)=\Sigma_2$.

We first prove the following lemma, the algorithmic part of which is not used until Section~\ref{sbsec:comp_algo}.

\begin{Lem}\label{lem:square_root}
Let $S$ be a connected oriented hyperbolic surface without boundary, with (possibly empty) punctures, and $f$ be a pseudo-Anosov mapping class on $S$. There is an algorithm to output all mapping classes $a$ on $S$, orientation-preserving or not, with $a^2=f$. In particular, there are at most finitely many such mapping classes $a$.
\end{Lem}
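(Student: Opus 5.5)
The approach is to exploit that any square root $a$ of $f$ commutes with $f$. Its orientation-preserving square $a^2=f$ lies in the centralizer of $f$. Since $f$ is pseudo-Anosov, $a$ preserves the pair of invariant measured foliations $\{\mathcal F^s,\mathcal F^u\}$ of $f$ projectively. By McCarthy's theorem \cite{mccarthy1982normalizers}, the centralizer of $f$ in the extended mapping class group is virtually cyclic; more precisely, the stabilizer of $\{[\mathcal F^s],[\mathcal F^u]\}$ is a finite extension of an infinite cyclic group. First, suppose $a$ is orientation-reversing. Then $a$ either swaps or preserves each foliation. If it swaps them, then $a^2$ preserves each and scales the measures by a product of reciprocal factors, so $a^2$ would have stretch factor $1$. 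This contradicts $a^2=f$ pseudo-Anosov. Hence in all cases $a$ preserves $\mathcal F^s$ and $\mathcal F^u$ individually and multiplies their transverse measures by $\lambda^{\mp1/2}$.

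The finiteness argument then runs as follows. Let $Z$ denote the group of mapping classes (orientation-preserving or not) that preserve $\mathcal F^s,\mathcal F^u$ projectively. The stretch-factor homomorphism $Z\to\R_{>0}$, $h\mapsto$ the scaling of $\mathcal F^u$, has finite kernel. The kernel consists of isometries of the singular flat structure, and such isometries form a finite group because the flat metric has finite area and a nonempty singular set (or cone points at punctures), except in sporadic cases handled by hand. Its image is discrete, hence cyclic. Any square root $a$ lies in $Z$ and has image $\lambda^{1/2}$, which determines a single coset of the finite kernel. Thus there are at most $|\ker|$ square roots, proving finiteness.

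For the algorithm, I would compute an invariant train track for $f$ via Bestvina--Handel, obtaining the flat structure, or equivalently a canonical Delaunay/veering triangulation associated to $f$ (Agol's veering triangulation of the mapping torus, or the periodic sequence of splittings). Every element of $Z$ acts on this canonical combinatorial object by a combinatorial automorphism of the periodic splitting sequence, possibly shifted. Square roots correspond to symmetries shifting by half the period, possibly composed with an orientation reversal of the surface that preserves the veering structure. Because the combinatorial data is finite, one enumerates all candidate combinatorial maps, realizes each as a mapping class, and checks $a^2=f$ using the solvable word problem in the mapping class group.

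The main obstacle is making the algorithmic step rigorous: one must produce a canonical finite combinatorial structure on which all of $Z$ acts, including orientation-reversing elements. I would first try the layered veering triangulation of the mapping torus restricted to the fiber. This is canonical, since it depends only on the foliations, which are preserved by $a$. Consequently $a$ induces a combinatorial automorphism of the bi-infinite periodic splitting sequence, and enumerating such automorphisms is a finite search.
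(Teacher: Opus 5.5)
Your proposal is correct, and its algorithmic part is the paper's argument. Any $c$ commuting with $f$ (orientation-preserving or not) carries the periodic splitting sequence to itself up to a shift $k(c)$ and a scaling $r(c)$. Since distinct terms are never projectively equal, a square root must have $k(a)=n/2$ and $r(a)=\sqrt{\lambda}$, so none exists if the period $n$ is odd. What remains is the finite search $a_*(\tau_0,\mu_0)=\sqrt{\lambda}\,(\tau_{n/2},\mu_{n/2})$, followed by checking $a^2=f$. Your separate finiteness argument, via McCarthy's theorem and the finite kernel of the stretch-factor homomorphism, is valid but not needed: the paper gets finiteness for free from that finite combinatorial search.
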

\begin{proof}
We compute the periodic splitting sequence of the stable lamination of $f$ in the sense of \cite{agol2011ideal}, which is a bi-infinite sequence of measured train tracks $\cdots\rightharpoonup(\tau_0,\mu_0)\rightharpoonup(\tau_1,\mu_1)\rightharpoonup(\tau_2,\mu_2)\rightharpoonup\cdots$ on $S$ related by maximal splittings, with some period $n=n(f)\ge1$ so that $f_*(\tau_m,\mu_m)=\lambda(\tau_{m+n},\mu_{m+n})$ for all $m$, where $\lambda=\lambda(f)>0$ is the dilatation of $f$. The periodic splitting sequence $\{(\tau_m,\mu_m)\}_m$ is well-defined up to scaling and index-shifting. If $c$ is any mapping class that commutes with $f$, orientation-preserving or not, then $\{c_*(\tau_m,\mu_m)\}_m$ is another periodic splitting sequence for $f$, hence by uniqueness we must have $c_*(\tau_m,\mu_m)=r(\tau_{m+k},\mu_{m+k})$ for all $m$, for some $r=r(c)>0$, $k=k(c)\in\Z$.

If $a^2=f$, then $a$ commutes with $f$, and we see from the discussion above that $$\lambda(\tau_n,\mu_n)=f_*(\tau_0,\mu_0)=(a_*)^2(\tau_0,\mu_0)=r(a)^2(\tau_{2k(a)},\mu_{2k(a)}).$$
Since different $(\tau_m,\mu_m)$'s are never equal up to scaling, we must have $k(a)=n(f)/2$, $r(a)=\sqrt{\lambda(f)}$. If $n(f)$ is odd, such $a$ does not exist; if $n(f)$ is even, one can find all potential such $a$ by solving $a_*(\tau_0,\mu_0)=\sqrt{\lambda(f)}(\tau_{n(f)/2},\mu_{n(f)/2})$. Then one checks for genuine solutions among this list.
\end{proof}

Since $\phi^M$ restricts to $\Sigma_1\cup\Sigma_2$, $\Phi^M$ restricts to a map on the product region $P_0$ exchanging $\Sigma_1^+$ and $\Sigma_2^+$. It follows that $\phi^M|_{\Sigma_1^+}$ is isotopic to $\theta\circ\phi^M|_{\Sigma_2^+}\circ\theta$ rel the fixed boundaries. Using $\phi^M|_{\Sigma_2^+}$ to identify $\Sigma_2^+=\Sigma_1^+$, this can be rephrased as saying that the orientation-reversing homeomorphism $\theta\colon\Sigma_1^+\to\Sigma_1^+$ satisfies that $\theta^2$ is isotopic to $\phi^M|_{\Sigma_1^+}$ up to isotopy rel the fixed boundaries. By Lemma~\ref{lem:square_root}, the number of such $\theta$ is finite up to isotopy not necessarily rel boundary.

Suppose $\theta,\theta'\colon\Sigma_1^+\to\Sigma_2^+$ are maps rel the fixed boundaries which are isotopic (not necessarily rel boundary), that give rise to compressions $C,C'$ of $\phi$, respectively. We claim that $C$ and $C'$ are related by a symmetry of $(S,\phi)$.

After identifying $\Sigma_2^+=\Sigma_1^+$ via $\phi^M|_{\Sigma_2^+}$, each of $\theta,\theta'$ permutes the fixed boundary components of $\Sigma_1^+$ cyclically, according to $\phi^M$. By assumption, $\theta'\colon\Sigma_1^+\to\Sigma_1^+$ is isotopic rel the fixed boundaries to some $\theta\prod_{j=1}^kT_j^{n_j}$, where $k$ is the number of fixed boundaries of $\Sigma_1^+$, and $T_j$ is the boundary Dehn twist along the $j$-th fixed boundary component of $\Sigma_1^+$, where the boundaries are ordered cyclically with respect to $\phi^M$. Since $\theta$ is orientation-reversing, $\theta\circ T_j\circ\theta^{-1}$ is isotopic to $T_{j+1}^{-1}$ rel the fixed boundaries. It follows that $(\theta')^2$ is isotopic rel the fixed boundaries to $\theta^2\prod_{j=1}^kT_j^{n_j-n_{j+1}}$. By assumption, this implies that $n_j=n$ is independent of $j$. One may now check that the multi-twist $c:=\prod_{i=0}^{M-1}(\phi^i\circ(\prod_{j=1}^kT_j)^n\circ\phi^{-i})$ is a symmetry of $(S,\phi)$ relating $C'$ to $C$, proving the claim.

\textbf{Case 2.2}: $\Sigma_1=\Sigma_2$.

This is analogous to Case 2.1.2, with minor changes. We repeat the argument as follows.

As before, for large $q$, $\gamma$ and $\gamma_q$ fill $\Sigma_1$. The union of closed faces of $D\cup D'$ adjacent to an interior disk component $D_0$ of the complement of $\gamma\cup\gamma_q$ in $\Sigma_1$ is an annulus with boundary $\partial D_0\cup\gamma'$ for some trivial loop $\gamma'$ (it cannot be a M\"obius band with boundary $\partial D_0$, since otherwise we find an embedded $\mathbb{RP}^2$ in $C$, contradicting that compression bodies are $\mathbb{RP}^2$-irreducible). Now, we see some sphere components of the interior boundary of $\nu(S\cup D\cup D')$, each of which contains two interior disk components of the complement of $\gamma\cup\gamma_q$ in $\Sigma_1$. There are four other sphere components, each of which contains two boundary disk components of the complement of $\gamma\cup\gamma_q$ in $\Sigma_1$, one on each side of $\delta_1$. Filling in all these spheres by balls, we obtain a twisted $I$-bundle region bounding $\Sigma_1$, whose vertical annulus boundary component corresponding to $\delta_1$ is glued to $\nu(\delta_1)$. This product region gives rise to a compression body $C_{00}\subset C$ with exterior boundary $S$.

The union $C'':=\cup_{i\ge0}\Phi^i(C_{00})\subset C$ is a compression body with exterior boundary $S$ that consists of disjoint twisted $I$-bundle regions $P_i$ bounding $\phi^i(\Sigma_1)$, $i=0,1,\cdots,M-1$, where $M$ is the minimal integer with $\phi^M(\Sigma_1)=\Sigma_1$. An annulus boundary of a product region $P_i$ adjacent to some $\phi^j(\nu(\delta_1))$ is glued to such $\phi^j(\nu(\delta_1))$. Again we have $C''=C$ by minimality of $C$.

The compression body $C$ is determined by the product region $P_0$, which is in turn determined by the orientation-reversing free involution $\iota\colon\Sigma_1^+\to\Sigma_1^+$ induced by $P_0$, where $\Sigma_1^+$ is the union of $\Sigma_1$ and boundary annuli in $S\backslash int(\Sigma_1)$ bounded by $\cup_{j\ge0}\phi^j(\delta_1)$, but with each $\phi^j(\delta_1)$ replaced by two copies of itself, each glued to the annulus on one side. Thus, the map $\Sigma_1^+\to S$ is $2$ to $1$ over each $\phi^j(\delta_1)$, and an embedding elsewhere. A boundary of $\Sigma_1^+$ is a \textit{fixed boundary} if it is a copy of some $\phi^j(\delta_1)$, and is a \textit{free boundary} otherwise. The involution $\iota$ is rel the fixed boundaries of $\Sigma_1^+$ (on which there is a canonical involution exchanging copies of $\phi^j(\delta_1)$), and is considered up to conjugations by isotopies rel the fixed boundaries. It remains to show that for any fixed $\Sigma_1,\delta_1$, the number of such $\iota\colon\Sigma_1^+\to\Sigma_1^+$ is finite up to symmetries.

We prove the following lemma.

\begin{Lem}\label{lem:involutive_conjugator}
Let $S$ be a connected oriented hyperbolic surface without boundary, with (possibly empty) punctures, and $f$ be a pseudo-Anosov mapping class on $S$. There is an algorithm to output all mapping classes $a$ on $S$, orientation-preserving or not, with $a^2=[f,a]=\mathrm{id}$. In particular, there are at most finitely many such mapping classes $a$.
\end{Lem}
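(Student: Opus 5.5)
We adapt the proof of Lemma~\ref{lem:square_root}. First compute the periodic splitting sequence $\{(\tau_m,\mu_m)\}_m$ of the stable lamination of $f$ in the sense of \cite{agol2011ideal}. It has period $n=n(f)$, so that $f_*(\tau_m,\mu_m)=\lambda(\tau_{m+n},\mu_{m+n})$. Any mapping class $c$ commuting with $f$, orientation-preserving or not, carries the sequence to another periodic splitting sequence for $f$. By uniqueness,
\[
c_*(\tau_m,\mu_m)=r(c)\,(\tau_{m+k(c)},\mu_{m+k(c)})
\]
for some $r(c)>0$ and $k(c)\in\Z$. One caveat: an orientation-reversing $c$ commuting with $f$ still maps the stable lamination of $f$ to itself, since $c$ conjugates $f$ to $f$. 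So the uniqueness statement applies verbatim, as it did in Lemma~\ref{lem:square_root}, where orientation-reversing $a$ were already allowed.

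Next we use the two relations satisfied by $a$. Since $[f,a]=\mathrm{id}$, $a$ commutes with $f$, so $r(a)$ and $k(a)$ are defined. Since $c\mapsto (r(c),k(c))$ is multiplicative in $r$ and additive in $k$ on the centralizer, $a^2=\mathrm{id}$ gives
\[
(\tau_0,\mu_0)=r(a)^2\,(\tau_{2k(a)},\mu_{2k(a)}).
\]
Distinct members of the sequence are never equal up to scaling, so $k(a)=0$ and $r(a)=1$. Hence $a_*(\tau_0,\mu_0)=(\tau_0,\mu_0)$: $a$ is a symmetry of the single measured train track $(\tau_0,\mu_0)$.

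Finally we enumerate. A train track $\tau_0$ fills $S$, since it carries the filling stable lamination. Its complementary regions are therefore disks and once-punctured disks. So a mapping class preserving $\tau_0$ up to isotopy is determined by the induced combinatorial automorphism of the train track graph together with its complementary regions. The set of such automorphisms (including orientation-reversing ones) that also preserve the weights $\mu_0$ is finite and can be listed by brute force. For each candidate $a$ on the list, we then check algorithmically whether $a^2=\mathrm{id}$ and $af=fa$ in the extended mapping class group, using any solution to the word problem. This outputs exactly the desired set, and finiteness follows.

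The main obstacle is justifying that a mapping class is determined by its action on the filling train track $\tau_0$. In particular, we must check that the only mapping class fixing $\tau_0$ combinatorially is the identity, which uses that $S$ is hyperbolic and $\tau_0$ fills. We must also check that each combinatorial automorphism is realized by at most one homeomorphism class. I would argue this via the fact that $\tau_0$ together with its complementary regions gives a cell decomposition of $S$ (with punctures), so that combinatorial automorphisms correspond to homeomorphisms up to isotopy by the Alexander trick.
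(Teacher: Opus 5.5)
Your proposal is correct and follows the paper's proof. Uniqueness of the periodic splitting sequence together with $a^2=\mathrm{id}$ forces $k(a)=0$ and $r(a)=1$, which reduces the problem to solving $a_*(\tau_0,\mu_0)=(\tau_0,\mu_0)$ and then re-checking the conditions. Your enumeration step, which uses the filling train track $\tau_0$ and its complementary disks and once-punctured disks as a cell structure and applies the Alexander trick, supplies a detail the paper leaves implicit.
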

\begin{proof}
We proceed as in Lemma~\ref{lem:square_root}. Let $\{(\tau_m,\mu_m)\}_m$ be the periodic splitting sequence of the stable lamination of $f$. All $\iota$ as in the lemma are found by solving $\iota_*(\tau_0,\mu_0)=(\tau_0,\mu_0)$ and re-checking the conditions.
\end{proof}

Since $\phi^M$ restricts to $\Sigma_1$, $\Phi^M$ restricts to the twisted product region $P_0$. It follows that $\phi^M|_{\Sigma_1^+}$ is isotopic to $\iota^{-1}\circ\phi^M|_{\Sigma_1^+}\circ\iota$ rel the fixed boundaries. By Lemma~\ref{lem:involutive_conjugator}, the number of such involutive $\iota$ is finite up to isotopy not necessarily rel boundary.

Suppose $\iota,\iota'\colon\Sigma_1^+\to\Sigma_1^+$ are maps rel the fixed boundaries which are isotopic (not necessarily rel boundary), that give rise to compressions $C,C'$ of $\phi$, respectively. We claim that $C$ and $C'$ are related by a symmetry of $(S,\phi)$.

By assumption, $\iota'$ is isotopic to some $\iota\prod_{j=1}^kT_j^{n_j}$ rel the fixed boundaries, where $k$ is the number of fixed boundary components of $\Sigma_1^+$ and $T_j$ is the boundary Dehn twist along the $j$-th boundary component. By $(\iota')^2=\iota^2=\mathrm{id}_{\Sigma_1^+}$ and the fact that $\iota,\iota'$ each commute with $\phi^M$ up to isotopy rel the fixed boundaries, an argument analogous to the one in Case 2.1.2 shows that $n_j=n$ is independent of $j$. One may now check that the multi-twist $c:=\prod_{i=0}^{M-1}(\phi^i\circ(\prod_{j=1}^kT_j)^n\circ\phi^{-i})$ is a symmetry of $(S,\phi)$ relating $C'$ to $C$, proving the claim.

\textbf{Case 3}: $|\gamma\cap\delta|=4$.

See Figure~\ref{fig:D}. Applying a further cut-and-paste operation if necessary, we may assume that $\delta_1=\delta_2'$, $\delta_2=\delta_1'$, $\Sigma_1'=\Sigma_1''$, and that $\gamma\cup\gamma_q$ intersects $\Sigma_1'$ in $4$ parallel essential arcs $\sigma_1,\cdots,\sigma_4$ in that order, each parallel to some $\sigma$. We exhibit an essential curve $\gamma'$ disjoint from $\delta$ that compresses in $C$, contradicting the minimality of $|\gamma\cap\delta|$.

\textbf{Case 3.1}: $\Sigma_1\ne\Sigma_2$.

As in Case 2.1, for large $q$, $\gamma$ and $\gamma_q$ fill $\Sigma_i$, $i=1,2$, and $D\cup D'$ is homeomorphic to $I\times((\gamma\cup\gamma_q)\cap\Sigma_1)$. On the interior boundary of $\nu(S\cup D\cup D')$, there are some sphere components that each contain one interior disk on $\Sigma_1$ and one on $\Sigma_2$, and three other sphere components that each contain one boundary disk component on $\Sigma_1$, one on $\Sigma_2$, and the annulus region on $\Sigma_1'$ bounded by $\sigma_i$ and $\sigma_{i+1}$, for some $i\in\{1,2,3\}$. Filling in these spheres gives a compression body $C_{00}\subset C$ with exterior boundary $S$, which consists of a product region cobounding $\Sigma_1,\Sigma_2$, and an extra $2$-handle attached along the union of $\sigma$ and an arc on the vertical boundary of the product region.

We see that the essential simple closed curve $\gamma':=\sigma_+\cup\delta_{1,0}\cup\sigma_-\cup\delta_{2,0}$ is a curve essentially contained in $\Sigma_1'$ that compresses in $C$, where $\sigma_\pm$ are two parallel copies of $\sigma$, and $\delta_{i,0}\subset\delta_i$ is the longer subarc on $\delta_i$ connecting the endpoints of $\sigma_+$ and $\sigma_-$ on $\delta_i$, $i=1,2$.

\textbf{Case 3.2}: $\Sigma_1=\Sigma_2$.

As in Case 2.2, for large $q$, $\gamma$ and $\gamma_q$ fill $\Sigma_1$. Filling in sphere components of $\nu(S\cup D\cup D')$ gives a compression body $C_{00}\subset C$ with exterior boundary $S$ that consists of a twisted $I$-bundle region bounding $\Sigma_1$ together with a $2$-handle attached along the union of $\sigma$ and an arc on the vertical boundary of the twisted product (in particular, $\delta_1\ne\delta_2$). The essential simple closed curve $\gamma'$ given exactly in the same way as in Case 3.1 is essentially contained in $\Sigma_1'$, and compresses in $C$.
\end{proof}

\subsection{Canonical forms of minimal compressions}\label{sbsec:comp_forms}
By the proof in Section~\ref{sbsec:comp_finite}, we see that every minimal compression of a surface homeomorphism $\phi$ of a hyperbolic surface $S$ (with canonical reduction system $\delta\subset S$) has one of the following forms:

\textbf{1.1} The compression that compresses every component of $\cup_{i\ge0}\phi^i(\gamma)$, for some component $\gamma$ of $\delta\cup\partial S$ for which no planar component of $S\backslash\nu(\delta)$ has all but exactly one boundary component contained in $\cup_{i\ge0}\phi^i(\overline{\nu(\gamma)})$. (In the descriptions of compressions henceforth, spherical components are always assumed to be capped off by balls.)

This corresponds to \textbf{Case 1.1} in Section~\ref{sbsec:comp_finite}. One may check that the extra assumption regarding planar components is a necessary and sufficient condition for the compression to be minimal. Sufficiency is clear. To show necessity, observe that if some planar component of $S\backslash\nu(\delta)$ violates the condition, having a single boundary component corresponding to $\gamma'\subset\delta\cup\partial S$ not contained in $\cup_{i\ge0}\phi^i(\gamma)$, then either each $\phi^i(\gamma)$ is adjacent to the same planar piece of $S\backslash\nu(\delta)$ on both sides, in which case the compression determined by $\gamma'$ gives a proper subcompression, or $\#\pi_0(\cup_{i\ge0}\phi^i(\gamma))>\#\pi_0(\cup_{i\ge0}\phi^i(\gamma'))$, in which case we can replace $\gamma$ by $\gamma'$ and induct to show the non-minimality.

\textbf{1.2} The compression that compresses every component of $\cup_{i\ge0}\phi^i(\gamma)$, for some essential (not $\partial$-parallel) simple closed curve $\gamma$ contained in a periodic piece of $S$ such that
\begin{itemize}
\item each $\phi^i(\gamma)$ is either disjoint from $\gamma$ or identical to $\gamma$;
\item there does not exist a nontrivial simple closed curve $\gamma'\subset S$ (possibly $\partial$-parallel) disjoint from $\cup_{i\ge0}\phi^i(\gamma)$, with each $\phi^i(\gamma')$ either disjoint from $\gamma'$ or identical to $\gamma'$, such that there is a planar component of $S\backslash\nu(\cup_{i\ge0}(\gamma\cup\gamma'))$ that has one component contained in $\nu(\cup_{i\ge0}\phi^i(\gamma'))$ and all other components contained in $\nu(\cup_{i\ge0}\phi^i(\gamma))$.
\end{itemize}

This corresponds to \textbf{Case 1.2} in Section~\ref{sbsec:comp_finite}. We claim that the extra assumption regarding the nonexistence of $\gamma'$ is a necessary and sufficient condition for the compression to be minimal.

Let $S_i$ denote the periodic piece being compressed, and $C_i(\gamma)$ denote the corresponding compression for $\phi|_{S_i}$. To show the claim, we look at the picture in the quotient orbifold $\overline{S_i}=S_i/\phi|_{S_i}$. Let $\bar\gamma\subset\overline{S_i}$ denote the image of $\gamma$. The cyclic covering induces a map $h\colon H_1(\overline{S_i})\to\Z/N$, where $N\ge1$ is the covering degree. We attach a $2$-handle to the interior boundary of $I\times\overline{S_i}$ along $\bar\gamma$ and assign along the cocore an arc of cyclic singularities of order $N/(gcd(h([\bar\gamma]),N))$ if $h([\bar\gamma])\ne0$, so that $h$ extends (uniquely) to the resulting $3$-orbifold, denoted $\overline{C_i}'$. If $h([\bar\gamma])=0$ and $\bar\gamma$ bounds a disk containing two singularities (of necessarily equal order), define $\overline{C_i}$ to be the union of $\overline{C_i}'$ with balls with singular arcs connecting the singularities on the boundaries, one for each such disk bounding $\bar\gamma$; if $h([\bar\gamma])\ne0$, define $\overline{C_i}=\overline{C_i}'$. The $3$-orbifold $\overline{C_i}=\overline{C_i}(\bar\gamma)$ is regarded as a compression of $(\overline{S_i},h)$, determined by $\bar\gamma$, and its $N$-fold cyclic cover determined by the unique extension of $h$ is the compression $C_i(\gamma)$ of $\phi|_{S_i}$. Conversely, the quotient of $C_i(\gamma)$ by any periodic extension of $\phi|_{S_i}$ is homeomorphic to $\overline{C_i}(\bar\gamma)$.

Suppose $\bar\gamma'\subset\overline{S_i}$ is another simple closed curve not bounding a disk containing at most one orbifold singularity, and $\gamma'\subset S_i$ one of its lifts. Then one can show from \cite[Lemma~4.1]{bonahon1983cobordism} that $C_i(\gamma')$ is a subcompression of $C_i(\gamma)$ if and only if $\overline{C_i}(\bar\gamma')$ is a subcompression of $\overline{C_i}(\bar\gamma)$. The latter happens if and only if $h([\bar\gamma])=0$ and $\bar\gamma'$ (after an isotopy) cobound with $\bar\gamma$ an annulus with exactly one orbifold singularity. But the (non)existence of such $\bar\gamma'$ is equivalent to the (non)existence of $\gamma'$ in the assumption, hence the claim.

\textbf{1.3} A compression obtained by gluing a minimal compression of a pseudo-Anosov piece (not compressing any boundary of the piece) of $S$ to the identity compression of the complement of this piece.

This corresponds to \textbf{Case 1.3} in Section~\ref{sbsec:comp_finite}. The stronger conclusion stated here, namely that any minimal compression that compresses an essential curve in a pseudo-Anosov piece is supported on that piece, can be deduced from the proof of Theorem~2.1 in \cite{casson1985algorithmic}.

\textbf{2.1.1} A compression $C$ as described in \textbf{Case 2.1.1} in Section~\ref{sbsec:comp_finite}.

More precisely, let $S_i=\sqcup_{j=1}^M\Sigma_{i,j}$ be a pseudo-Anosov piece of $S$, where the $\Sigma_{i,j}$'s are connected components of $S_i$ that are permuted cyclically under $\phi$, $i=1,2$, $S_1\ne S_2$. Let $\delta_1$ be a component of $\delta$ with $\nu(\delta_1)$ adjacent to both $\Sigma_{1,1}$ and $\Sigma_{2,1}$. Let $S_i^+$ denote the union of $S_i$ with boundary annuli in $S\backslash int(S_i)$ bounded by $\cup_{j\ge0}\phi^j(\delta_1)$, $i=1,2$. Let $\theta\colon S_1^+\to S_2^+$ be an orientation-reversing homeomorphism rel the identity map on the common boundaries (called the \textit{fixed boundaries}), which intertwines $\phi|_{S_1^+}$ and $\phi|_{S_2^+}$ up to isotopy rel the fixed boundaries (note that necessarily $\theta$ maps $\Sigma_{1,j}$ to $\Sigma_{2,j}$ for each $j$). Then $\theta$ determines a compression $C$ of $\phi$ consisting of a product region cobounding $S_1^+$ and $S_2^+$, inside which $S_1^+$ and $S_2^+$ are identified via $\theta$.

\textbf{2.1.2} A compression $C$ as described in \textbf{Case 2.1.2} in Section~\ref{sbsec:comp_finite}.

More precisely, let $S_1=\sqcup_{j=1}^{2M}\Sigma_{1,j}$ be a pseudo-Anosov piece of $S$, where the $\Sigma_{1,j}$'s are connected components of $S_1$ that are permuted cyclically under $\phi$. Let $\delta_1$ be a component of $\delta$ with $\nu(\delta_1)$ adjacent to both $\Sigma_{1,1}$ and $\Sigma_{1,M+1}$. Let $S_1^+$ denote the union of $S_1$ with boundary annuli in $S\backslash int(S_1)$ bounded by $\cup_{j\ge0}(\phi^j(\delta_1))$, but with each $\phi^j(\delta_1)$ replaced by two copies, each glued to the annulus on one side. Let $\theta\colon S_1^+\to S_1^+$ denote an orientation-reversing involution rel the natural involution on the union of boundary components arising from copies of $\phi^j(\delta_1)$ (called the \textit{fixed boundaries}), that commutes with $\phi|_{S_1^+}$ up to isotopy rel the fixed boundaries (note that necessarily $\theta$ maps $\Sigma_{1,j}$ to $\Sigma_{1,M+j}$ for each $j=1,\cdots,M$). Then $\theta$ determines a compression $C$ of $\phi$ consisting of a twisted product region bounding $S_1^+$, inside which $S_1^+$ is identified with itself via $\theta$. The twisted product region is the disjoint union of $M$ connected product regions.

\textbf{2.2} A compression $C$ as described in \textbf{Case 2.2} in Section~\ref{sbsec:comp_finite}.

More precisely, let $S_1=\sqcup_{j=1}^M\Sigma_{1,j}$ be a pseudo-Anosov piece of $S$, where the $\Sigma_{1,j}$'s are connected components of $S_1$ that are permuted cyclically under $\phi$. Let $\delta_1$ be a component of $\delta$ with $\nu(\delta_1)$ adjacent to $\Sigma_{1,1}$ on both sides. Let $S_1^+$ and its fixed boundaries be defined in the same way as in \textbf{2.1.2}, and let $\iota\colon S_1^+\to S_1^+$ be an orientation-reversing free involution rel the natural involution on the fixed boundaries that commutes with $\phi|_{S_1^+}$ up to isotopy rel the fixed boundaries (note that necessarily $\iota$ maps $\Sigma_{1,j}$ to $\Sigma_{1,j}$ for each $j$). Then $\iota$ determines a compression $C$ of $\phi$ consisting of a twisted product region bounding $S_1^+$. The twisted product region is a disjoint union of $M$ connected twisted product regions.

\medskip

Conversely, one may check that every compression of $\phi$ of the form listed in one of the six cases above is minimal, and that the six cases are mutually exclusive. Moreover, a symmetry of $(S,\phi)$ does not change the form of a minimal compression from one to another.\medskip

We note that the forms \textbf{2.1.1, 2.1.2, 2.2} have a uniform description as follows. Let $\delta_1\subset\delta$ be a component of $\delta$. Assume the (one or two) pieces of $S$ adjacent to $\nu(\delta_1)$ are all pseudo-Anosov, and let $S_{\delta_1}$ be their union. Let $S_{\delta_1}^+$ denote the union of $S_{\delta_1}$ with $\cup_{j\ge0}\overline{\nu(\phi^j(\delta_1))}$, but with each $\phi^j(\delta_1)$ replaced by two copies, each glued to one side of its complement. A boundary component of $S_{\delta_1}^+$ is called a \textit{fixed boundary} if it arises from some $\phi^j(\delta_1)$. Let $\iota\colon S_{\delta_1}^+\to S_{\delta_1}^+$ be an orientation-reversing free involution rel the natural involution on the fixed boundaries, which commutes with $\phi_1:=\phi|_{S_{\delta_1}^+}$ up to isotopy rel the fixed boundaries. Then, $\iota$ determines an $I$-bundle region bounding $S_{\delta_1}^+$ which gives rise to a minimal compression $C=C_\iota$ of $\phi$.

For later purposes, we note the following constraint on the $\phi_1=\phi|_{S_{\delta_1}^+}$ in this unified setup for \textbf{2.1.1, 2.1.2, 2.2} which is necessary for the existence of $\iota$. Let $N>0$ be chosen so that $\phi^N$ preserves $\delta_1$ with orientation. After an isotopy, we may assume $\phi^N$ fixes $\delta_1$ pointwise. Then, the sum of the fractional Dehn twist coefficients of $\phi_1^N$ about the two boundaries arising from $\delta_1$ is zero. This follows from the fact that $\iota\phi_1^N\iota^{-1}$ is isotopic to $\phi_1^N$ rel the fixed boundaries, and that $\iota$ is orientation-reversing, exchanging the fixed boundaries in pairs.

We refer the reader to Section~\ref{sbsec:C21(4_1)} for a motivating example.\smallskip

We end this section with the following lemma for the next section, which removes the ambiguity on the condition $[\iota,\phi_1]\sim\mathrm{id}_{S_{\delta_1}^+}$ between free isotopies and isotopies rel the fixed boundaries.

\begin{Lem}\label{lem:FDTC_sufficient}
Let $S_{\delta_1^+}$ and $\phi_1$ be defined as above, so that the fractional Dehn twist coefficient condition on $\phi_1^N$ holds. If $\iota$ is an orientation-reversing free involution on $S_{\delta_1^+}$ that commutes with $\phi_1$ up to isotopy not necessarily rel boundary, then there exists an orientation-reversing free involution $\iota'$ freely isotopic to $\iota$ that commutes with $\phi_1$ up to isotopy rel the fixed boundaries. Such an $\iota'$ is computable from the data.
\end{Lem}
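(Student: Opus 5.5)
The plan is to correct $\iota$ by boundary Dehn twists, as in the proofs of Cases~2.1.2 and~2.2, and to use the fractional Dehn twist coefficient condition to remove the one remaining obstruction.

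First, isotope $\iota$ freely so that it restricts to the natural involution on the fixed boundaries. This is possible since $\iota$ is a free involution exchanging the fixed boundary components in pairs compatibly with the identification, and the restriction can be adjusted in a collar. Since $\iota\phi_1\iota^{-1}$ is freely isotopic to $\phi_1$, and both agree with $\phi_1$ on the fixed boundaries up to this collar adjustment, the difference is a multi-twist. Concretely, $\iota\phi_1\iota^{-1}\phi_1^{-1}$ is isotopic rel the fixed boundaries to $\prod_j T_j^{n_j}$, where $T_j$ ranges over the Dehn twists about the fixed boundary components. Free boundaries can be handled by free isotopy, so they are ignored here.

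Next, replace $\iota$ by $\iota' = \prod_j T_j^{r_j}\,\iota\,\prod_j T_j^{-r_j}$, or more generally by $\iota\prod_j T_j^{s_j}$ subject to remaining an involution. Because $\iota$ is orientation-reversing and swaps the two copies $\delta^{\pm}$ of each $\phi^k(\delta_1)$, we have $\iota T_{j}\iota^{-1} = T_{\bar j}^{-1}$, where $\bar j$ denotes the paired boundary. Conjugating by a twist therefore changes the commutator by explicit linear expressions in the $r_j$, using also $\phi_1 T_j\phi_1^{-1} = T_{\phi_1(j)}$. The equation to solve is a linear system over $\Z$ indexed by the $\phi_1$-orbits of pairs $\{\delta^+,\delta^-\}$. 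After summing along each $\phi_1$-orbit, as in Case~2.1.1, the twists can be telescoped away except for one residual integer per orbit. That integer is the total twist accumulated under $\phi_1^N$ on $\delta^+$ plus that on $\delta^-$.

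The key step, and the expected main obstacle, is identifying this residual obstruction with the sum of the fractional Dehn twist coefficients of $\phi_1^N$ at $\delta^{\pm}$. I would argue as follows. Since $\iota\phi_1^N\iota^{-1}$ differs from $\phi_1^N$ by the residual twist $T_{\delta^+}^{m}T_{\delta^-}^{m}$, with signs fixed by orientation-reversal, take the fractional Dehn twist coefficient on both sides. The coefficient is additive under boundary twists and negated under orientation-reversing conjugation exchanging $\delta^\pm$. This gives $\mathrm{fdtc}_{\delta^+}+\mathrm{fdtc}_{\delta^-}$ equal to $-(\mathrm{fdtc}_{\delta^+}+\mathrm{fdtc}_{\delta^-}) + 2m$ up to the sign convention. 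So $m$ equals the sum, and the hypothesis forces $m=0$. One must check that the residue is genuinely symmetric under $\delta^+\leftrightarrow\delta^-$; otherwise the antisymmetric part can be absorbed by the conjugation $\prod T_j^{r_j}$. Conjugating by twists keeps $\iota'$ a free involution, freely isotopic to $\iota$.

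Computability follows because $n_j$ is obtained by comparing $\iota\phi_1\iota^{-1}$ with $\phi_1$ in the rel-boundary mapping class group, which has solvable word problem. The $r_j$ then come from solving the explicit linear system.
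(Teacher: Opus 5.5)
Your overall route is the paper's. You express the defect $\iota\phi_1\iota^{-1}\phi_1^{-1}$ as a multi-twist along the fixed boundaries. You correct $\iota$ by boundary twists; the paper's $\iota'=\iota\prod_jT_j^{r_j}$ with $r_j$ partial sums of the $n_s$ is a conjugate of $\iota$ by twists on one copy of each pair, as in your proposal. You then kill the single residual sum using the fractional Dehn twist coefficient identity for $\iota\phi_1^N\iota^{-1}$.

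The gap is at the symmetry step you flag. You say that if the defect is not symmetric under $\delta^+\leftrightarrow\delta^-$, ``the antisymmetric part can be absorbed by the conjugation $\prod T_j^{r_j}$.'' That is false. For $R=\prod_jT_j^{r_j}$ one has $R\iota R^{-1}=\prod_jT_j^{r_j+r_{\bar j}}\iota$. Writing $\iota\phi_1\iota^{-1}\sim\phi_1\prod_jT_j^{n_j}$, a direct computation shows that replacing $\iota$ by $R\iota R^{-1}$ replaces $n_j$ by $n_j+d_j+d_{\bar j}$, where $d_j=r_{\phi_1(j)}-r_j$. This change is symmetric under $j\leftrightarrow\bar j$, because $\phi_1$ commutes with the pairing. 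More generally, any involution freely isotopic to $\iota$ and equal to the natural involution on the fixed boundaries is $\iota\prod_jT_j^{s_j}$ with $s_j=s_{\bar j}$. So the antisymmetric part of $(n_j)$ is an invariant: if it were nonzero, no $\iota'$ would exist and your linear system would have no solution. Symmetry is also needed in your FDTC step when $\phi_1$ cycles all $2k$ fixed boundaries in one orbit, since the FDTC identity then only controls $\sum_{j=1}^{2k}n_j$, not the sum over one copy of each pair.

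The missing ingredient is the paper's observation that $\iota$ and $\phi_1\iota\phi_1^{-1}$ are both honest involutions. In your convention, $\iota\phi_1\iota^{-1}\phi_1^{-1}\sim P:=\prod_jT_j^{n_j}$ together with $\iota^2=\mathrm{id}$ gives $\phi_1\iota\phi_1^{-1}\sim\iota P$. The left side is an involution agreeing with the natural involution on the fixed boundaries, so $\mathrm{id}\sim(\iota P)^2=(\iota P\iota^{-1})P=\prod_jT_j^{n_j-n_{\bar j}}$. Independence of boundary twists on the hyperbolic surface $S_{\delta_1}^+$ then forces $n_j=n_{\bar j}$. With this in hand:
\begin{itemize}
\item $\iota\phi_1^N\iota^{-1}\sim\phi_1^N(\prod_jT_j)^{NS/k}$, where $S$ is the sum of the $n_j$ over one copy of each of the $k$ pairs.
\item Your FDTC comparison gives $S=0$.
\item The telescoped correction is then again a free involution freely isotopic to $\iota$, and it commutes with $\phi_1$ rel the fixed boundaries, as you intended.
\end{itemize}
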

\begin{proof}
Let $\sim$ denote the equivalence relation given by isotopies rel the fixed boundaries. By assumption, $\iota\phi_1\iota^{-1}\sim\phi_1\prod_{j=1}^{2k}T_j^{n_j}$ for some integers $n_j$, where $k=\#\pi_0(\cup_{j\ge0}\phi^j(\delta_1))$, and $T_j$ is the boundary Dehn twist along the $j$-th boundary component of $S_{\delta_1}^+$. The boundaries of $S_{\delta_1}^+$ are assumed to be ordered so that the $j$-th and $(k+j)$-th boundary components are the ones corresponding to $\phi^j(\delta_1)$, and that $\phi_1$ either permutes all components cyclically in order, or permutes the first and last $k$ components cyclically in order respectively.

Since $\iota$ and $\phi_1\iota\phi_1^{-1}$ are both involutions, we must have $n_j=n_{k+j}$ for all $j=1,\cdots,k$. Consequently, $\iota\phi_1^N\iota^{-1}\sim\phi_1^N(\prod_{j=1}^{2k}T_j)^{NS/k}$, where $S=\sum_{j=1}^kn_j=\sum_{j=k+1}^{2k}n_j$. Let $\partial_j$ denote the $j$-th boundary component of $S_{\delta_1}^+$, we have $$FDTC(\phi_1^N,\partial_{k+1})+NS/k=FDTC(\iota\phi_1^N\iota^{-1},\partial_{k+1})=-FDTC(\phi_1^N,\partial_1)=FDTC(\phi_1^N,\partial_{k+1}),$$ implying that $S=0$.

Now, define $\iota'=\iota\prod_{j=1}^{2k}T_j^{r_j}$, where $r_j=\sum_{s=1}^{j-1}n_s$. Then $\iota'$ is a free involution since $r_j=r_{k+j}$ for all $j=1,\cdots,k$. Moreover, $$[\phi_1,\iota']\sim\phi_1\iota(\prod_jT_j^{r_j})\phi_1^{-1}(\prod_jT_j^{-r_j})\iota^{-1}\sim\phi_1\iota\phi_1^{-1}(\prod_jT_j^{r_{j-1}-r_j})\iota^{-1}\sim\iota(\prod_jT_j^{n_{j-1}+r_{j-1}-r_j})\iota^{-1}\sim\mathrm{id},$$ as desired.
\end{proof}

\subsection{Algorithm for minimal compressions}\label{sbsec:comp_algo}
In addition to Lemma~\ref{lem:square_root} and Lemma~\ref{lem:involutive_conjugator}, we shall make use of the following two results in our algorithm.
\begin{Lem}[Conjugacy Problem, \cite{hemion1979classification}]\label{lem:conjugacy}
Given two homeomorphisms $f,g\colon\Sigma\to\Sigma$ on a compact oriented surface $\Sigma$, it is decidable whether there is a homeomorphism $h\colon\Sigma\to\Sigma$ inducing a given permutation $\pi_0(\partial\Sigma)\to\pi_0(\partial\Sigma)$ such that $hfh^{-1}$ is isotopic to $g$ (not necessarily rel boundary). One may compute one such $h$ if it exists.
\end{Lem}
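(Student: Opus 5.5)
This statement is cited to Hemion, so the plan is to reduce it to Hemion's solution of the conjugacy problem for mapping classes of compact surfaces, keeping track of the boundary permutation. Hemion's algorithm decides, for $f,g$ on $\Sigma$, whether there is \emph{some} homeomorphism $h$ with $hfh^{-1}\simeq g$ (free isotopy), and produces one. Our statement additionally prescribes the induced permutation $\sigma\colon\pi_0(\partial\Sigma)\to\pi_0(\partial\Sigma)$. To absorb this constraint, we first note that the mapping class group (not rel boundary) surjects onto the group of permutations of boundary components that preserve the topological type of $\Sigma$ componentwise. We also note that the kernel, the pure mapping class group, has finite index.

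The first step is to modify the surfaces so that the permutation is forced. Decorate each boundary component $\partial_j\Sigma$ of $\Sigma$ by gluing on a distinct ``marker'' surface $X_j$, for instance a surface of genus $j+g_0$ with one boundary component, with $g_0$ larger than the genus of $\Sigma$ and larger than anything else present. This produces a surface $\hat\Sigma$ in which $f$ extends as $\hat f$ by gluing homeomorphisms $X_j\to X_{\pi_f(j)}$. However, $f$ permutes boundary components, so such gluing maps exist only if $X_j\cong X_{\pi_f(j)}$. To handle this, we use instead a marking adapted to both $f$ and $\sigma$. For the prescribed $\sigma$, we label the boundary components of the source of $g$ by $\sigma$ applied to the labels for $f$. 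The condition that $h$ induces $\sigma$ then becomes the condition that $h$ preserves labels.

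The cleaner route is to enumerate over a finite set of candidates. Using Hemion, compute one conjugator $h_0$ if any exists. Every conjugator has the form $h_0c$ with $c$ in the centralizer $Z(f)$ of $[f]$ in the (free-boundary) mapping class group. The set of permutations induced by conjugators is therefore $\pi(h_0)\,\pi(Z(f))$, where $\pi$ denotes the induced permutation. The question thus reduces to computing the image $\pi(Z(f))$ in the finite symmetric group, and checking whether $\pi(h_0)^{-1}\sigma$ lies in it.

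To compute $\pi(Z(f))$, we note that it is the set of permutations $\tau$ for which some $c$ inducing $\tau$ commutes with $f$. For each of the finitely many $\tau$, this is the same question with $g=f$, so it is circular unless we use more structure. The key step is to get a computable generating set for $Z(f)$; this is presumably the content of the paper's Lemma~\ref{lem:centralizer}, or it can be extracted from Hemion's method. Hemion's proof reduces via Nielsen--Thurston to periodic and pseudo-Anosov pieces. There, the centralizers are computed from finite data: train-track splitting sequences as in Lemma~\ref{lem:square_root}, or the finite automorphism groups of orbifold quotients. The reducible case is glued together along the canonical reduction system. Given generators of $Z(f)$, their images generate $\pi(Z(f))$, and membership can then be decided.

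The main obstacle is this centralizer computation in the reducible case, where Dehn twists about reduction curves and the rel-boundary subtleties of Lemma~\ref{lem:commutator_no_boundary_twist} appear. If that proves delicate, the alternative is to run Hemion's algorithm itself with labels: his procedure builds a finite canonical invariant (for instance, the canonical splitting sequence in each pseudo-Anosov piece, together with combinatorial gluing data) and compares the invariants of $f$ and $g$ via finitely many combinatorial isomorphisms. Each such isomorphism determines the boundary permutation, so one simply restricts to isomorphisms inducing $\sigma$. In either route, once a valid isomorphism is found it yields an explicit $h$.
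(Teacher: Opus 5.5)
Your main route is correct, but it differs from the paper. The paper gives no argument; it simply cites \cite{hemion1979classification} and implicitly reads the prescribed-permutation version into Hemion's theorem. You instead derive the constrained statement from the unconstrained one. Once a single conjugator $h_0$ is found, all conjugators form the coset $h_0Z(f)$ in the free-boundary mapping class group. The induced permutation $\pi$ is a homomorphism to the finite symmetric group on $\pi_0(\partial\Sigma)$. Hence $\sigma$ is realizable iff $\pi(h_0)^{-1}\sigma\in\pi(Z(f))$, which is decidable from finitely many generators of $Z(f)$. Recording words during the closure computation in the symmetric group even produces an explicit $h=h_0c$.

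This buys transparency on the one point the bare citation glosses over. Fix any $h_\sigma$ inducing $\sigma$; prescribing $\sigma$ then amounts to asking whether $h_\sigma fh_\sigma^{-1}$ and $g$ are conjugate by a \emph{pure} mapping class. Decidability of conjugacy does not pass to finite-index subgroups in general, so some extra input of this kind is genuinely needed. The price is that you rely on the much later centralizer algorithm of \cite{rafi2020centralizers} rather than on Hemion alone.

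Two things to fix. First, you need a generating set for the centralizer in the mapping class group with \emph{free} boundary, which is how \cite{rafi2020centralizers} state their result. Lemma~\ref{lem:centralizer} as formulated in the paper does not serve: $C(\Sigma,f)$ consists of rel-boundary classes, all of which induce the identity permutation, so its generators say nothing about $\pi(Z(f))$.

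Second, drop the marker-surface paragraph; as you noticed, distinct markers cannot be glued equivariantly when $f$ permutes boundary components. Also drop the fallback of running Hemion's algorithm ``with labels.'' It rests on an unverified description of how Hemion's proof works; in particular, canonical splitting sequences in the sense of \cite{agol2011ideal} come decades after Hemion's work. So that route is not an argument as written, and you do not need it.
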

\begin{Lem}[Centralizer, \cite{rafi2020centralizers}]\label{lem:centralizer}
Given a homeomorphism $f\colon\Sigma\to\Sigma$ on a compact oriented surface $\Sigma$, there is an algorithm to find a generating set of $C(\Sigma,f)$, the symmetry group of $(\Sigma,f)$.
\end{Lem}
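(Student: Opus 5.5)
The plan is to reduce the statement to the algorithm of \cite{rafi2020centralizers} for centralizers in the (free) mapping class group of a punctured surface. The discrepancy between ``rel boundary'' and ``free'' commutation will be handled by a fractional Dehn twist coefficient argument in the spirit of Lemma~\ref{lem:FDTC_sufficient}. Throughout, $f$ is taken rel boundary, as in the definition of $C(\Sigma,f)$.

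\textbf{Reductions.} First reduce to $\Sigma$ connected and hyperbolic.
\begin{itemize}
\item A rel boundary symmetry preserves every boundary component, so it cannot exchange components of $\Sigma$ that have boundary; closed components may be permuted, but only within $f$-orbits and among homeomorphic components. This reduces everything to computing the symmetry groups of the first-return maps on representatives of each orbit of components, together with a finite computable permutation part given by Lemma~\ref{lem:conjugacy}.
\item The disk, annulus and sphere cases are explicit. For the annulus, $C(\Sigma,f)=\Z$, generated by the Dehn twist.
\end{itemize}

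\textbf{Step 1: the free centralizer and its pure part.} Let $\Sigma^\circ$ be $\Sigma$ with each boundary component replaced by a puncture, and let $\bar f$ be the image of $f$ in $\mathrm{Mod}(\Sigma^\circ)$. Recall the capping exact sequence
\[
1\to\Z^k\to\mathrm{Mod}_\partial(\Sigma)\to\mathrm{PMod}(\Sigma^\circ)\to1,
\]
whose kernel is generated by the boundary Dehn twists $T_1,\dots,T_k$; these are central and, by hyperbolicity, linearly independent.
\begin{itemize}
\item Use \cite{rafi2020centralizers} to compute a finite generating set of the centralizer $Z$ of $\bar f$ in $\mathrm{Mod}(\Sigma^\circ)$.
\item Let $Z_0=Z\cap\mathrm{PMod}(\Sigma^\circ)$. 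This is the kernel of the computable action of $Z$ on the finite set of punctures, so it has finite index in $Z$. A finite generating set for $Z_0$ is obtained by Schreier's lemma.
\end{itemize}

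\textbf{Step 2: identify $C(\Sigma,f)$ with a preimage.} The key claim is that $C(\Sigma,f)$ is exactly the preimage $\widehat{Z_0}$ of $Z_0$ in $\mathrm{Mod}_\partial(\Sigma)$. One inclusion is clear, since an element of $C(\Sigma,f)$ fixes each boundary component and maps to $Z_0$.

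For the converse, let $c\in\widehat{Z_0}$. Then $cfc^{-1}\sim f\prod_j T_j^{n_j}$ rel boundary for some integers $n_j$, because $cfc^{-1}$ and $f$ have the same image in $\mathrm{PMod}(\Sigma^\circ)$. Compare fractional Dehn twist coefficients at each $\partial_j$:
\begin{itemize}
\item $\mathrm{FDTC}$ is a conjugacy invariant for conjugation by rel boundary maps preserving $\partial_j$, so $\mathrm{FDTC}(cfc^{-1},\partial_j)=\mathrm{FDTC}(f,\partial_j)$.
\item Multiplying by $T_j^{n}$ adds $n$ to the coefficient at $\partial_j$.
\end{itemize}
Hence every $n_j=0$, so $c\in C(\Sigma,f)$. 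This is the same argument as in Lemma~\ref{lem:FDTC_sufficient}, and it avoids any need to compute the kernel of a homomorphism to $\Z^k$, which could a priori fail to be finitely generated.

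\textbf{Step 3: output the generating set.} Lift each Schreier generator of $Z_0$ to a rel boundary homeomorphism of $\Sigma$, by any choice of isotopy on collars. Together with $T_1,\dots,T_k$, these lifts generate $\widehat{Z_0}=C(\Sigma,f)$.

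\textbf{Main obstacle.} The main point to verify is the FDTC conjugation invariance in Step 2, in the setting where $f$ may be reducible or periodic near the boundary. Here I would use the definition of $\mathrm{FDTC}$ via the lift to the universal cover of a collar, or via translation numbers, under which invariance under conjugation by maps fixing $\partial_j$ is formal. A secondary point is to check that \cite{rafi2020centralizers} applies to punctured, and not only closed, surfaces. If it does not, I would instead work with a closed surface obtained by capping each boundary component with a suitable rigid surface, and then take the stabilizer of those subsurfaces.
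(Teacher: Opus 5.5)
Your route is the one the paper sketches: compute the centralizer of the punctured-surface class via \cite{rafi2020centralizers}, pass to the pure part by Schreier's lemma, then control boundary twists. Your FDTC argument in Step~2 is correct whenever $f$ preserves each boundary component, and there it gives a slicker proof of Lemma~\ref{lem:commutator_no_boundary_twist} than the paper's.

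\textbf{The gap.} You assume at the outset that $f$ is rel boundary ``as in the definition of $C(\Sigma,f)$.'' The definition does not grant this. Only the symmetries $c$ are required to be rel boundary, and the commutation is required up to isotopy rel boundary; $f$ itself may permute the boundary components. The paper says explicitly after the definition that $\phi$ need not be rel boundary, and it applies the lemma to such $\phi$ in Section~\ref{sbsec:comp_algo}.

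In that case your key claim $C(\Sigma,f)=\widehat{Z_0}$ is false. Suppose $f$ exchanges $\partial_1$ and $\partial_2$. Then $T_1fT_1^{-1}\simeq T_1T_2^{-1}f$ rel boundary, so $T_1\in\widehat{Z_0}$. But $T_1\notin C(\Sigma,f)$ when $T_1T_2^{-1}$ is nontrivial in $\mathrm{Mod}_\partial(\Sigma)$; for instance, take $\Sigma$ a pair of pants.

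Your FDTC comparison can then only be run on a power $f^m$ that fixes every boundary component. It shows only that the twist exponents $n_j$ sum to zero over each orbit of the boundary permutation. This is exactly what happens in the proof of Lemma~\ref{lem:FDTC_sufficient} that you cite, where a correction by boundary twists is needed afterwards. So Step~3, which outputs every $T_j$ together with arbitrary lifts, generates too large a group.

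\textbf{The missing correction.} Define $d\colon\widehat{Z_0}\to\Z^k$ by $cfc^{-1}f^{-1}\simeq\prod_jT_j^{d(c)_j}$ rel boundary.
\begin{itemize}
\item Boundary twists are central in $\mathrm{Mod}_\partial(\Sigma)$, so $d$ is a homomorphism and $C(\Sigma,f)=\ker d$.
\item If $\sigma$ is the boundary permutation induced by $f$, then $d(T_j)=e_j-e_{\sigma(j)}$. These vectors span exactly the lattice of vectors whose entries sum to zero on each $\sigma$-orbit.
\item By the orbit-sum statement above, $\mathrm{im}(d)$ lies in that lattice, so $\mathrm{im}(d)=d(\langle T_1,\dots,T_k\rangle)$.
\end{itemize}
Hence for each generator $g$ of $\widehat{Z_0}$ you can compute a multi-twist $t_g$ with $d(t_g)=d(g)$. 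Then $\ker d$ is generated by the elements $g\,t_g^{-1}$ together with the orbit multi-twists $\prod_{j\in O}T_j$, which generate $\ker d\cap\langle T_1,\dots,T_k\rangle$.

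So the kernel of a homomorphism to $\Z^k$ that you hoped to avoid does appear. It is harmlessly finitely generated, because the central twist subgroup already surjects onto $\mathrm{im}(d)$. With this step added, plus the easy collar reduction from ``$f$ preserves each boundary component'' to ``$f$ is rel boundary,'' the argument is complete.
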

\cite{rafi2020centralizers} stated Lemma~\ref{lem:centralizer} for mapping class groups with free boundary condition, but the version here with fixed boundary condition can be deduced as a consequence by analyzing the permutation of boundary components and boundary twists and applying Schreier's lemma for subgroups, for instance.\smallskip

\begin{proof}[Proof of Theorem~\ref{thm:compression}(2)]
Before giving the algorithm, we make the following observation. Any symmetry of $(S,\phi)$ preserves the canonical reduction system $\delta$ of $\phi$, and permutes periodic (resp. pseudo-Anosov) pieces of $S$. This information can be encoded by a map $C(S,\phi)\to\mathrm{Aut}(\pi_0(\partial(\nu(\delta))))$, whose image is computable, since it is a finite group, and a generating set of it can be written down thanks to Lemma~\ref{lem:centralizer}. In particular, we can decide which components of $\delta$ can be related to one another, which complementary regions of $\delta$ in $S$ can be related to one another, and how the boundaries of the complementary regions can be related to one another, by symmetries of $(S,\phi)$.

Now, for each of the six canonical forms (\textbf{1.1}, \textbf{1.2}, \textbf{1.3}, \textbf{2.1.1}, \textbf{2.1.2}, \textbf{2.2}) listed in Section~\ref{sbsec:comp_forms}, we give an algorithm that outputs all minimal compressions of that form up to symmetry. For each case, the algorithm consists of three steps:
\begin{enumerate}[1.]
\item Produce a list of compressions of $\phi$ of the given form, denoted $\mathcal C=\{C_1,\cdots,C_N\}$, such that each minimal compression of $\phi$ of the given form is among these up to symmetry.
\item Throw away compressions in the list $\mathcal C$ that are not minimal and obtain a new list $\mathcal C'\subset\mathcal C$.
\item Remove duplicates from the list $\mathcal C'$; that is, among all minimal compressions related by symmetries, keep only one representative. Output the new list $\mathcal C''\subset\mathcal C'$ that we obtain.
\end{enumerate}

\textbf{1.1} Step 1: For each component $\gamma$ of $\delta\cup\partial S$ satisfying the condition about planar components, create the compression body $C_\gamma$ that compresses every component of $\cup_{i\ge0}\phi^i(\gamma)$, then output all these $C_\gamma$ as $\mathcal C$.

Step 2: Discard from $\mathcal C$ the compressions $C_\gamma$ for which $\gamma$ violates the assumption on planar components in the description of \textbf{1.1} in Section~\ref{sbsec:comp_forms} and obtain $\mathcal C'$.

Step 3: $C_{\gamma_1}$ is related to $C_{\gamma_2}$ by a symmetry if and only if the $\phi$-orbit of $\gamma_1$ is related to the $\phi$-orbit of $\gamma_2$ by a symmetry of $(S,\phi)$. This can be determined thanks to the observation at the beginning of the proof, enabling us to remove duplicates from $\mathcal C'$ and obtain $\mathcal C''$.

\textbf{1.2} Step 1: For every periodic piece $S_i$ of $S$, construct the quotient orbifold $S_i\to\overline{S_i}=S_i/(\phi|_{S_i})$. Let $G_2<G_1<G_0=\mathrm{Homeo}_\partial^+(\overline{S_i})$ be groups defined as in \textbf{Case 1.2} in Section~\ref{sbsec:comp_finite}. Write a list $\{\overline{\gamma_1,}\cdots,\overline{\gamma_r}\}$ of all essential simple closed curves in $\overline{S_i}$ up to $G_0$ (this is a finite process). Since one can write down a finite generating set for the rel boundary mapping class group of $\overline{S_i}$, by definition of $G_2$, one can write down a finite set $\{\overline{\phi_1},\cdots,\overline{\phi_s}\}$ of elements in $G_0$ that contains one element in each right coset of $G_2$ in $G_0$. Then, $\{\overline{\phi_j}(\overline{\gamma_k})\}$ gives a list of simple closed curves in $\overline{S_i}$ up to $G_2$. The lifts of these curves in $S_i$, for various $i$, give a list $\mathcal C$ of minimal compressions of $\phi$, exhaustive up to symmetry.

Step 2: By the assumption on the nonexistence of $\gamma'$ imposed in the description of \textbf{1.2} in Section~\ref{sbsec:comp_forms} and the subsequent description in the quotient orbifold $\overline{S_i}$, a curve $\bar\gamma\subset\overline{S_i}$ determines a minimal compression of $\phi$ if and only if $h([\bar\gamma])\ne0$, or $h([\bar\gamma])=0$ and every orbifold singularity of $\overline{S_i}$ is contained in a disk bounded by $\gamma$ containing two orbifold singularities. We throw away compressions in $\mathcal C$ determined by curves in $\overline{S_i}$ violating this condition and obtain $\mathcal C'$.

Step 3: By the observation at the beginning of the proof, we can determine which periodic pieces are related to one another by symmetries. Pick one representative for each equivalence class of periodic pieces of $S$ under symmetries, and throw away minimal compressions in $\mathcal C'$ that compress curves in a periodic piece that is not chosen as a representative. Denote by $\mathcal C_0''$ the resulting list of minimal compressions.

We still need to remove from $\mathcal C_0''$, for each periodic piece $S_i$ chosen as a representative, duplicates among compressions that compress curves in $S_i$ that are related by symmetries of $(S,\phi)$ which does not come from symmetries of $(S_i,\phi|_{S_i})$. If $c\colon S\to S$ is a symmetry of $(S,\phi)$ that restricts to a homeomorphism $S_i\to S_i$ which permutes boundary components of $S_i$ trivially, we can assume, by isotoping $c$ rel boundary and applying Lemma~\ref{lem:commutator_no_boundary_twist} as in the argument of \textbf{Case 2.1.1} in Section~\ref{sbsec:comp_finite} if necessary, that $c|_{S_i}$ commutes with $\phi|_{S_i}$ up to isotopy rel boundary, providing a symmetry of $(S_i,\phi|_{S_i})$. Hence we only need to account for symmetries $c\colon S\to S$ that restricts to $S_i$ and that permutes the boundary components of $S_i$ nontrivially. By the observation at the beginning of the proof, we can understand how symmetries of $(S,\phi)$ preserving $S_i$ can permute boundary components of $S_i$. This enables us to further check and remove duplicates in $\mathcal C_0''$ and obtain $\mathcal C''$ as desired.

\textbf{1.3} Step 1: Fix complete hyperbolic metrics on the interior of pseudo-Anosov pieces of $S$. By the proof in \cite{casson1985algorithmic}, the minimal length of an essential simple closed curve that compresses in a nontrivial compression of $\phi$ is bounded above by a computable constant independent of the compression. We enumerate all essential simple closed curves in pseudo-Anosov pieces of $S$ with length bounded by this bound. For each such curve $\gamma$, apply the strategy in the proof of Theorem~2.1 in \cite{casson1985algorithmic} to construct a list of compressions that includes all minimal compressions containing $\gamma$. Together, this gives a list $\mathcal C$ as desired. Here, a compression of $\phi$ is presented by a set of simple closed curves in $S$ so that compressing all the curves in the set (and capping off spheres if any) yields the compression body.

Step 2,3: We perform these two steps together. Since minimal compressions in the list $\mathcal C$ are exhaustive up to symmetry, we know that every non-minimal compression in $\mathcal C$ properly contains another compression in $\mathcal C$ after modifying by a symmetry. We claim that, for any compression $C\in\mathcal C$, it is possible to write down all (finitely many) compressions of $\phi$ that are related to $C$ by a symmetry. Since it is easy to determine the containment relation between two compression bodies with exterior boundary $S$, this would enable us to remove non-minimal compressions as well as duplicates in $\mathcal C$.

Let $C\in\mathcal C$ be any compression that compresses curves in some pseudo-Anosov piece $S_i$ of $S$. Every symmetry of $(S_i,\phi|_{S_i})$ extends to a symmetry of $(S,\phi)$ by identity outside $S_i$. By the observation at the beginning of the proof, we can determine all possible images of $S_i$ under a symmetry, as well as, for each such image $S_j$, all possible bijections $p\colon\pi_0(\partial S_i)\to\pi_0(\partial S_j)$ that such symmetries can give rise to. In addition, we can pick an explicit symmetry $c_{j,p}$ realizing each given realizable $j$ and $p\colon\pi_0(\partial S_i)\to\pi_0(\partial S_j)$. If $c'$ is another symmetry realizing $(j,p)$, then $c'$ and $c$ differ on $S_i$ by pre-composing a symmetry of $(S_i,\phi|_{S_i})$. Pick $N>0$ so that $\phi|_{S_i}^N$ permutes the boundary components of $S_i$ trivially and has integral fractional Dehn twist coefficients about each of them. Then, one may straighten $\phi|_{S_i}^N$ to some $\psi$ rel boundary which provides a symmetry of $(S_i,\phi|_{S_i})$. By the proof of \cite[Proposition~3.6]{rafi2020centralizers} or that of Lemma~\ref{lem:square_root} or \ref{lem:involutive_conjugator}, one can write down explicitly a finite set $\{c_1,\cdots,c_K\}$ of symmetries of $(S_i,\phi|_{S_i})$ that contains at least one representative for each $\langle\psi\rangle$-coset in $C(S_i,\phi|_{S_i})$ up to boundary twists. Now, since $\psi$ and boundary twists act trivially on $C$, the finite set $\{(c_{j,p}\circ c_k)_*C\}$ contains all compressions of $\phi$ that are related to $C$ by a symmetry.

\textbf{2.1.1} Step 1: For any component $\delta_1\subset\delta$ adjacent to two distinct pieces $S_1,S_2$ of $S$ (ordered in an arbitrary way), check whether $S_1,S_2$ have the same topological type, and whether they are both pseudo-Anosov pieces. If so, we define $S_1^+,S_2^+$ as before, and proceed to find possible product regions cobounding $S_1^+$ and $S_2^+$. We first pick $kM>0$ with $\phi^{kM}$ preserving $\delta_1$, and check if the sum of the fractional Dehn twist coefficients of $\phi^{kM}|_{S_1^+}$ and $\phi^{kM}|_{S_2^+}$ about $\delta_1$ is zero. If so, for any bijection $p\colon\pi_0(S_1^+)\to\pi_0(S_2^+)$ respecting the natural identification of the common boundaries, Lemma~\ref{lem:conjugacy} enables us to determine whether there is an orientation-reversing homeomorphism $\theta\colon S_1^+\to S_2^+$ rel the fixed boundaries inducing $p$ that intertwines $\phi|_{S_i^+}$, $i=1,2$, up to isotopy not necessarily rel boundary. If such a $\theta$ is found, by Lemma~\ref{lem:FDTC_sufficient}, we may modify $\theta$ by boundary Dehn twists so that $\theta$ intertwines $\phi|_{S_i^+}$, $i=1,2$, up to isotopy rel the fixed boundaries. This new $\theta$ gives rise to a compression $C_{\delta_1,p}$ of $\phi$ as described in Section~\ref{sbsec:comp_forms}. The argument in \textbf{Case 2.1.1} in Section~\ref{sbsec:comp_finite} shows that $C_{\delta_1,p}$ is independent of the choice of $\theta$ up to symmetries of $(S,\phi)$. If $\delta_2$ is a component of $\delta$ lying on the same $\phi$-orbit as $\delta_1$, then starting the procedure using $\delta_2$ would yield the same $S_1,S_2$ and the same set of compressions up to symmetry. Thus, we write alternatively $C_{[\delta_1],p}$ for $C_{\delta_1,p}$, where $[\delta_1]$ denotes the $\phi$-orbit of components of $\delta$ containing $\delta_1$. Now, let $\mathcal C$ be the collection of all $C_{[\delta_1],p}$ obtainable from this procedure, where $[\delta_1]$ runs over $\phi$-orbits of $\pi_0(\delta)$ and $p$ runs over admissible bijections $\pi_0(S_1^+)\cong\pi_0(S_2^+)$, $S_1,S_2$ being the two pieces of $S$ adjacent to $\nu(\delta_1)$. Here, a compression of $\phi$ is presented by an orientation-reversing homeomorphism $S_1^+\to S_2^+$ as described, although one may easily transform this into a presentation using a set of compressing curves as before.

Step 2 is automatic ($\mathcal C'=\mathcal C$).

Step 3: By the observation at the beginning of the proof, we can determine which pairs $([\delta_1],p)$ are related to one another by symmetries of $(S,\phi)$, and consequently which $C_{[\delta_1],p}$'s are related to one another by symmetries. This enables us to remove duplicates in $\mathcal C'$.

\textbf{2.1.2} Step 1: For any component $\delta_1\subset\delta$ adjacent to a single piece $S_1$ of $S$, let $\Sigma_{1,1},\cdots,\Sigma_{1,M_0}$ be connected components of $S_1$, permuted cyclically by $\phi$, so that $\Sigma_{1,1}$ is adjacent to $\delta_1$ (there might be two different ways to name components of $S_1$ this way; pick one). Check whether $S_1$ is a pseudo-Anosov piece, $M_0=2M$ is even, and $\Sigma_{1,M+1}$ is adjacent to $\delta_1$. If so, define $S_1^+$ as before, with $\Sigma_{1,j}^+$'s the corresponding connected components, and proceed to find possible twisted product regions bounding $S_1^+$. We first pick $k>0$ with $\phi^{kM}$ preserving $\delta_1$ with orientation, and check if the sum of the fractional Dehn twist coefficients of $\phi^{kM}|_{S_1^+}$ about the two boundary components corresponding to $\delta_1$ is zero. If so, we apply Lemma~\ref{lem:square_root} to find all orientation-reversing homeomorphisms $\theta_1\colon\Sigma_{1,1}^+\to\Sigma_{1,M+1}^+$ rel the fixed boundaries, up to isotopy not necessarily rel boundary, with $\phi^M|_{\Sigma_{1,1}^+}$ isotopic to $\theta_1\circ\phi^M|_{\Sigma_{1,M+1}^+}\circ\theta_1$. Every such $\theta_1$ gives rise, by conjugating with iterations of $\phi$, to an orientation-reversing involution $\theta\colon S_1^+\to S_1^+$ rel (the natural involution on) the fixed boundaries exchanging $\Sigma_{1,j}^+$ and $\Sigma_{1,M+j}^+$ that commutes with $\phi|_{S_1^+}$ up to isotopy not necessarily rel boundary. Lemma~\ref{lem:FDTC_sufficient} then implies that we may modify $\theta$ by boundary Dehn twists so that it commutes with $\phi|_{S_1^+}$ up to isotopy rel the fixed boundaries. This new $\theta$ gives rise to a compression $C_\theta$ of $\phi$ as described in Section~\ref{sbsec:comp_forms}. The argument in \textbf{Case 2.1.2} in Section~\ref{sbsec:comp_finite} shows that a different choice of boundary twist modification to $\theta$ yields a compression related to $C_\theta$ by a symmetry (given by a power of the $(+1)$ multi-twist along components in $\cup_{i\ge0}\phi^i(\delta_1)$). Now, take $\mathcal C$ to be the collection of all $C_\theta$ obtainable from this procedure, for all $\delta_1$.

Step 2 is automatic. Step 3 is analogous to that in \textbf{1.3}, in that for any $C_\theta\in\mathcal C$ we can write down all compressions of $\phi$ related to $C$ by a symmetry, up to precomposing with the action of the infinite cyclic group $\Z<C(S,\phi)$ generated by the $(+1)$ multi-twist along components in $\cup_{i\ge0}\phi^i(\delta_1)$, where the component $\delta_1\subset\delta$ is taken to be the image in $S$ of a fixed boundary of the domain of $\theta$. Since it is easy to check whether two compressions of $\phi$ (especially those of the form $C_\theta$) are related by this $\Z$-action, this allows us to remove duplicates in $\mathcal C'$.

\textbf{2.2} Step 1: For any component $\delta_1\subset\delta$ adjacent to a single piece $S_1$ of $S$, let $\Sigma_{1,1},\cdots,\Sigma_{1,M}$ be connected components of $S_1$, permuted cyclically by $\phi$, so that $\Sigma_{1,1}$ is adjacent to $\delta_1$. Check whether $S_1$ is a pseudo-Anosov piece, and $\delta_1$ is adjacent to $\Sigma_{1,1}$ on both sides. If so, define $S_1^+$ as before, with $\Sigma_{1,j}^+$'s the corresponding connected components, and proceed to find possible twisted product regions bounding $S_1^+$. We first pick $k>0$ with $\phi^{kM}$ preserving $\delta_1$ with orientation, and check if the sum of the fractional Dehn twist coefficients of $\phi^{kM}|_{S_1^+}$ about the two boundary components corresponding to $\delta_1$ is zero. If so, we apply Lemma~\ref{lem:involutive_conjugator} to find all orientation-reversing involutions $\iota_1\colon\Sigma_{1,1}^+\to\Sigma_{1,1}^+$ rel the fixed boundaries, up to isotopy not necessarily rel boundary. Filter out the involutions that have fixed points (one may use Lemma~\ref{lem:conjugacy} to check if the involution is in the unique conjugacy class of free orientation-reversing involutions). Every remaining such $\iota_1$ gives rise to an orientation-reversing free involution $\iota\colon S_1^+\to S_1^+$ rel (the natural involution on) the fixed boundaries, preserving each component, that commutes with $\phi|_{S_1^+}$ up to isotopy not necessarily rel boundary. Lemma~\ref{lem:FDTC_sufficient} then implies that we may modify $\iota$ by boundary Dehn twists so that it commutes with $\phi|_{S_1^+}$ up to isotopy rel the fixed boundaries. This new $\iota$ gives rise to a compression $C_\iota$ of $\phi$ as described in Section~\ref{sbsec:comp_forms}. The argument in \textbf{Case 2.2} in Section~\ref{sbsec:comp_finite} shows that a different choice of boundary twist modification to $\iota$ yields a compression related to $C_\iota$ by a symmetry. Now, take $\mathcal C$ to be the collection of all $C_\iota$ obtainable from this procedure, for all $\delta_1$.

Step 2 is automatic. Step 3 is analogous to that in \textbf{2.1.2}.
\end{proof}

\section{Ribbon concordances of nonsimple fibered knots}\label{sec:nonsimple}
In this section, we use the classification in Section~\ref{sbsec:comp_forms} of minimal compressions of surface homeomorphisms into six canonical forms to give alternative proofs of results of Miyazaki \cite{miyazaki1994nonsimple}, avoiding the heavy use of the characteristic submanifold theory of Jaco--Shalen \cite{jaco1979} and Johannson \cite{johannson1979}.

The perspective on nonsimple fibered knots in this section uses more fibered information than the proof of Theorem~\ref{thm:finite} does. Namely, we will make use of the fact that if $K$ is a fibered knot, then every JSJ piece in its complement is fibered in the class that respects the natural boundary parametrizations, and that the monodromy of $K$ is glued from the monodromy of the JSJ pieces of its complement (with appropriate multiplicities).

\subsection{Example: The \texorpdfstring{$(2,1)$}{(2,1)}-cable of the figure \texorpdfstring{$8$}{8} knot}\label{sbsec:C21(4_1)}
Miyazaki \cite[Example~2]{miyazaki1994nonsimple} showed that $C_{2,1}(4_1)$, the $(2,1)$-cable of the figure $8$ knot, is not strongly homotopy-ribbon in any homotopy $4$-ball. This knot (and variants of it) has attracted a lot of recent attention in \cite{dai20242,kang2024cables,kang2025smooth}, which show that it is not slice, ruling out the possibility that it provides a counterexample to the slice-ribbon conjecture.

As an illustration of the classification in Section~\ref{sbsec:comp_forms}, we classify all compressions of the monodromy of $C_{2,1}(4_1)$; in particular, we will see that it does not compress to $id_{D^2}$, implying via Theorem~\ref{thm:CG} that $C_{2,1}(4_1)$ is not strongly homotopy-ribbon, giving a self-contained reproof of Miyazaki's result.

The cable pattern $C_{2,1}\subset S^1\times D^2$ is a fibered pattern with fiber a two-holed disk, denoted $D_2^2$, and monodromy $\phi_{C_{2,1}}$ freely isotopic to a half rotation of the disk, exchanging the two holes. As a rel boundary mapping class, $\phi_{C_{2,1}}$ has fractional Dehn twist coefficient $1/2$ about the boundary of the holed disk, and $\phi_{C_{2,1}}^2$ has fractional Dehn twist coefficient -$2$ about each of the inner boundaries. The figure $8$ knot $4_1$ is a fibered knot with fiber the genus one surface with one boundary $S_{1,1}$, and monodromy $\phi_{4_1}$ given by the matrix $\left(\begin{smallmatrix}2&1\\1&1\end{smallmatrix}\right)$ (as a free mapping class), with fractional Dehn twist coefficient $0$ about the boundary. The cable knot $C_{2,1}(4_1)$ is thus a fibered knot with fiber $\Sigma:=D^2_2\cup(S_{1,1}^{(1)}\sqcup S_{1,1}^{(2)})$ where $S_{1,1}^{(i)}$'s are identical copies of $S_{1,1}$, and monodromy $\phi=\phi_{C_{2,1}(4_1)}$ that restricts to $\phi_{C_{2,1}}$ on $D^2_2$, maps $S_{1,1}^{(1)}$ to $S_{1,1}^{(2)}$ via the identity map, and maps $S_{1,1}^{(2)}$ to $S_{1,1}^{(1)}$ via $\phi_{4_1}$.

The monodromy $\phi$ has one periodic piece and one pseudo-Anosov piece. The periodic piece $D^2_2$ has no essential curve, hence $\phi$ admits no minimal compression of form \textbf{1.2}. A compression of form \textbf{1.3} decreases the genus by at least $2$ on each connected component of the relevant pseudo-Anosov piece, so $\phi$ admits no minimal compression of form \textbf{1.3}. There are clearly no minimal compressions of forms \textbf{2.1.1, 2.1.2, 2.2} either. Hence, every minimal compression of $\phi$ is of the form \textbf{1.1}. The two reduction curves on $\Sigma$ are on the same $\phi$-orbit, and they cobound with $\partial\Sigma$ a planar surface; thus the compression along these two curves is non-minimal. We conclude that the unique minimal compression of $\phi$ is the compression along $\partial\Sigma$, which yields a monodromy $\phi_1\colon\Sigma_1\to\Sigma_1$, where $\Sigma_1$ is the disjoint union of $D^2$ and $S_{1,1}^{(1)}\cup S_{1,1}^{(2)}$, $\phi_1$ is the identity on $D^2$ and exchanges $S_{1,1}^{(1)}$ and $S_{1,1}^{(2)}$. Moreover, $\phi_1^2$ is freely isotopic to $\left(\begin{smallmatrix}2&1\\1&1\end{smallmatrix}\right)$ on each of $S_{1,1}^{(i)}$, and the sum of the fractional Dehn twist coefficients of $\phi_1^2$ about the reduction curve on its two sides is $-4\ne0$.

The monodromy $\phi_1$ has a single pseudo-Anosov piece, separated by a reduction curve. It is straightforward to see that $\phi_1$ admits no minimal compression of forms \textbf{1.2, 1.3, 2.1.1, 2.2}. The condition on fractional Dehn twist coefficients implies that $\phi_1$ admits no minimal compression of form \textbf{2.1.2} either. Hence, the unique minimal compression of $\phi_1$ is the form \textbf{1.1} compression given by compressing the unique reduction curve. This yields the monodromy $\phi_2\colon\Sigma_2\to\Sigma_2$, where $\Sigma_2=D^2\sqcup T^2\sqcup T^2$, $\phi_2$ is $id_{D^2}$ on $D^2$, mapping the first $T^2$ component identically onto the second and the second onto the first by $\left(\begin{smallmatrix}2&1\\1&1\end{smallmatrix}\right)$.

The monodromy $\phi_2$ admits no further minimal compressions. Thus $\phi_1$ and $\phi_2$ are all the possible nontrivial compressions of $\phi$. We note, however, that $\phi_2$ is null-cobordant (i.e. cobordant to $id_{D^2}\colon D^2\to D^2$), as $\phi_2|_{T^2\sqcup T^2}$ extends to $I\times T^2$ since $\left(\begin{smallmatrix}2&1\\1&1\end{smallmatrix}\right)$ has an orientation-reversing square root $\left(\begin{smallmatrix}1&1\\1&0\end{smallmatrix}\right)$.

\subsection{Proofs of Miyazaki's results}
We refer the reader again to \cite{budney2006jsj} for an exposition on the JSJ decomposition of knot complements.
\begin{proof}[Proof of Theorem~\ref{thm:miyazaki}]
Sufficiency is clear, as $U\le J_i\#(-J_i)$ for all $i>m$. We prove necessity by induction on the genus of $K_1\#\cdots\#K_n$. In the genus $0$ case we have $m=n=0$ and the statement is trivial. Assume the genus is positive. If $n=1$, the statement is trivial, so we also assume $n>1$. Write $K=K_1\#\cdots\#K_n$ and $J=J_1\#\cdots\#J_m$. We may assume $J\ne K$. Since the assumption and conclusion are both transitive under $\le_h$, we may further assume that there is no intermediate knot $I$ with $J\le_hI\le_hK$.

By Theorem~\ref{thm:CG}, the monodromy $\phi_K\colon F_K\to F_K$ of $K$ compresses in some compression body $C$ to the monodromy of $J$.

The knot $K$ is the splice of $K_1,\cdots,K_n$ along the keychain link $H_n$. The keychain link pattern $H_n$ is fibered, with monodromy given by the identity map on the $n$-holed disk $D^2_n$. The monodromy $\phi_K$ is given by the union of $id_{D^2_n}$ and $\phi_{K_i}\colon F_{K_i}\to F_{K_i}$, $i=1,\cdots,n$, glued along the boundary components in the standard way.

If there is a minimal subcompression $C'$ of $C$ that comes from a compression of $\phi_{K_i}$ for some $i$, we have $\partial_iC'=D^2_n\cup(F_i\sqcup(\sqcup_{j\ne i}F_{K_j}))\sqcup S$ for some connected surface $F_i$ with one boundary, and some (possibly empty or disconnected) closed surface $S$. Since $\partial_iC$ has no closed component, $S$ compresses to $\emptyset$ in $C\backslash C'$. Consequently, $\phi_{K_i}$ compresses to a monodromy on $F_i$ which is necessarily the monodromy of a (possibly trivial or composite) knot $K_i'$, implying that $K_i'\le_hK_i$. By minimality, the prime factors of $K_i'$ and the knots $K_j$ for $j\ne i$ consist of exactly the knots $\{J_1,\cdots,J_m\}$, proving the desired statement.

Assume now that there is no minimal subcompression of $C$ that comes from a compression of some $\phi_{K_i}$.

\textbf{Case 1}: Some nontrivial curve $\gamma\ne\partial F_K$ on $D^2_n\subset F_K$ compresses in $C$.

Then, compressing $\phi_K$ along $\gamma$ gives a subcompression $C'$ of $C$, with interior boundary the disjoint union of $\phi_{K_1\#\cdots\#K_r}$ and $\hat\phi_{K_{r+1}\#\cdots\#K_n}$ for some $1\le r\le n-1$ (for some reordering of the $K_i$'s), where $\hat\phi$ denotes the cap-off of a surface homeomorphism $\phi$ rel boundary. Since $\partial_iC$ has no closed components, $\hat\phi_{K_{r+1}\#\cdots\#K_n}$ compresses to $id_\emptyset$ in $C\backslash C'$. We thus have $U\le_hK_{r+1}\#\cdots\#K_n$, and hence $J=K_1\#\cdots\#K_r$ by minimality. By the induction hypothesis applied to $U\le_hK_{r+1}\#\cdots\#K_n$ and minimality, we know either $r=n-1$ or $r=n-2$ and $K_n=-K_{n-1}$, and the statement follows in both cases.

\textbf{Case 2}: No nontrivial curve $\gamma\ne\partial F_K$ on $D^2_n\subset F_K$ compresses in $C$.

By the classification of minimal compressions in Section~\ref{sbsec:comp_forms}, the only minimal subcompression $C'$ of $C$ is the compression of $\partial F_K$, which compresses $\phi_K$ to $id_{D^2}\sqcup\hat\phi_K$. We conclude that $\hat\phi_K$ compresses to $id_\emptyset$ in $C\backslash C'$ and $J=U$ is the unknot. If $n\ge3$, in view of the classification result, our assumptions imply that $\hat\phi_K$ is incompressible, a contradiction. Hence $n=2$. We claim that $K_2=-K_1$, which would finish the proof. Note that when $K_1$ (or $K_2$) is hyperbolic, the only possible compression of $\hat\phi_K$ is of the form \textbf{2.1.1} in Section~\ref{sbsec:comp_forms}, implying that the monodromies of $K_1,K_2$ are mirrors of each other, giving the claim. In the general case when $K_1,K_2$ are non-hyperbolic, we need to do some case-by-case analysis.

\textbf{Case 2.1}: The core curve in the annulus $\hat{D^2_2}\subset\hat F_K$ is not contained in the canonical reduction system of $\hat\phi_K$.

Then, each of $K_1,K_2$ is either a torus knot or a cable knot. The monodromy $\phi_{T_{p,q}}$ of a torus knot $T_{p,q}$ ($p,|q|>1$) is freely isotopic to a periodic homeomorphism on $S_{(p-1)(|q|-1),1}$ of period $p|q|$ whose quotient orbifold $D^2(p,|q|)$ is the disk with an order $p$ singularity and an order $|q|$ singularity. The monodromy $\phi_{C_{p,q}}$ of a cable pattern $C_{p,q}$ ($p>1$) is freely isotopic to a periodic homeomorphism on $S_{(p-1)(|q|-1),p+1}$ of period $p|q|$ whose quotient orbifold $A(p)$ is the annulus with an order $p$ singularity. The fractional Dehn twist coefficients of both $\phi_{T_{p,q}}$ and $\phi_{C_{p,q}}$ about the root boundary are $1/pq$. Write $K_1=C_{p_1,q_1}(K_1')$ and $K_2=C_{p_2,q_2}(K_2')$, for some (not necessarily nontrivial) fibered knots $K_1',K_2'$. By assumption, the periodic maps on two sides of $\hat{D^2_2}$ glue to a single periodic map, which implies $1/p_1q_1+1/p_2q_2=0$, i.e. $p_1q_1+p_2q_2=0$.

If both $K_1,K_2$ are torus knots, then $K_2$ being concordant to $-K_1$ implies that $K_2=-K_1$ by a classical argument. If $K_1$ is a torus knot and $K_2$ is a cable knot, then the periodic piece of $\hat F_K$ containing the annulus $\hat{D^2_2}$ quotients to the orbifold $D^2(p_1,|q_1|,p_2)$, which comes with a map $h\colon H_1(D^2(p_1,|q_1|,p_2))\to\Z/p_1|q_1|$ as explained in \textbf{1.2} in Section~\ref{sbsec:comp_forms}. By assumption, the only possible minimal compression of $\hat\phi_K$ is the compression of form \textbf{1.2} that compresses the lifts of an essential curve $\bar\gamma\subset D^2(p_1,|q_1|,p_2)$ that bounds exactly two singularities of orders $p_2,p_1$ or $p_2,|q_1|$. If $h(\bar\gamma)\ne0$, after the compression, we see a periodic component that quotients to $S^2$ with three orbifold singularities, which cannot compress to $\emptyset$, a contradiction. Thus we have $h(\bar\gamma)=0$, but then the corresponding compression violates the minimality condition imposed in \textbf{1.2} (stated more explicitly in Step 2 of \textbf{1.2} in Section~\ref{sbsec:comp_algo}), a contradiction.

Finally, we consider the case when $K_1,K_2$ are both cable knots. The periodic piece of $\hat F_K$ containing $\hat{D^2_2}$ quotients to the orbifold $A(p_1,p_2)$, where $A$ denotes the annulus. The only possible minimal compression of $\hat\phi_K$ is the compression of form \textbf{1.2} that compresses the lifts of the curve $\bar\gamma\subset A(p_1,p_2)$ that bounds a disk containing the two singularities. From the same argument as above, we find $h(\bar\gamma)=0$, and hence $p_1=p_2$, $q_1=-q_2$. The minimal compression determined by $\bar\gamma$ ``folds'' the monodromy of $C_{p_1,q_1}$ onto $C_{p_2,q_2}$, and the resulting surface homeomorphism is $(\hat\phi_{K_1'\#K_2'})^{1/p_1}$. Here, for a surface homeomorphism $\phi\colon S\to S$, by $\phi^{1/k}$ we mean the surface homeomorphism on $\sqcup_{i=1}^kS$ that maps the $j$-th copy of $S$ identically onto the $(j+1)$-th copy for $j<k$ and the last copy onto the first by $\phi$. Since compressions of $\phi^{1/k}$ are in one-to-one correspondence with those of $\phi$, we know $\hat\phi_{K_1'\#K_2'}$, like $(\hat\phi_{K_1'\#K_2'})^{1/p_1}$, compresses to $id_\emptyset$, implying that $U\le_hK_1'\#K_2'$ by Theorem~\ref{thm:CG}. By the induction hypothesis, $K_2'=-K_1'$, hence $K_2=-K_1$, as desired.

\textbf{Case 2.2}: The core curve in the annulus $\hat{D^2_2}\subset\hat F_K$ is contained in the canonical reduction system of $\hat\phi_K$.

By the classification of minimal compressions in Section~\ref{sbsec:comp_forms}, we find that the only minimal subcompression of $C$ is a compression of the form \textbf{2.1.1} that ``folds'' the (necessarily pseudo-Anosov) root piece of $\phi_{K_1}$ onto that of $\phi_{K_2}$. In particular, $K_i$ is the splice of some fibered knots $K_i^{(1)},\cdots,K_i^{(r)}$ along a hyperbolic splicing pattern $P_i$, where $P_i$ is by definition a hyperbolic knot in the complement of the $r$-component unlink $U_r$ in $S^3$, $i=1,2$, for some $r>0$ independent of $i$. By assumption, each $P_i$ is a fibered pattern with some monodromy $\phi_{P_i}\colon F_{P_i}\to F_{P_i}$, and there is an orientation-reversing homeomorphism $\theta\colon F_{P_1}\to F_{P_2}$ that intertwines $\phi_{P_1},\phi_{P_2}$ up to isotopy rel the fixed boundaries (the root boundaries). By considering parametrizations on the boundary tori as in \cite[Lemma~2.2]{miyazaki1994nonsimple}, one can show that $\theta$ actually intertwines $\phi_{P_1},\phi_{P_2}$ up to isotopy rel boundary. This implies that $P_2=-P_1$, and that the resulting surface homeomorphism after the minimal compression is given by $\sqcup_{i=1}^r\hat\phi^{1/w_i(P_1)}_{K_1^{(i)}\#K_2^{(i)}}$ (after some reordering), where $w_i(P_1)$ (necessarily nonzero) denotes the linking number between $P_1$ and the $i$-th unknot component of $U_r$ in $S^3$, taken to be positive. As before, we conclude from the induction hypothesis that $K_2^{(i)}=-K_1^{(i)}$ for all $i$, and thus $K_2=-K_1$.
\end{proof}

\begin{proof}[Proof of Corollary~\ref{cor:miyazaki}]
(1) Let $K_1,K_2$ be concordant fibered knots that are both minimal with respect to $\le_h$. By slice-ribbon, $U\le K_1\#(-K_2)$, hence $U\le_hK_1\#(-K_2)$. By Theorem~\ref{thm:miyazaki}, this implies that the collection of prime factors of $K_1$ and those of $-K_2$ (each is minimal with respect to $\le_h$ by assumption) can be grouped into mirrored pairs. By minimality, no pair can appear within the prime factors of a single $K_i$, from which we conclude $K_1=-(-K_2)=K_2$.

(2) Suppose $K$ is a fibered knot whose class in the concordance group is torsion with order $n\ge2$. We prove $n=2$. By the smooth $4$-dimensional Poincar\'e conjecture, $J\le_hK$ implies $J$ is concordant to $K$. Hence, without loss of generality, we may take $K$ to be minimal with respect to $\le_h$. By slice-ribbon, $U\le nK$, hence $U\le_hnK$, By Theorem~\ref{thm:miyazaki}, the collection of prime factors of $K$, when copied $n$ times, can be grouped into mirrored pairs. Since the collection of prime factors of $K$ itself contains no mirrored pair by minimality, each prime factor of $K$ must be (negative) amphichiral. This implies $K=-K$, so $2K=K\#(-K)$ is ribbon and hence $n=2$.
\end{proof}

\printbibliography

\end{document}